\documentclass[a4paper,10pt]{article}

\usepackage{geometry}
\geometry{a4paper,left=1.2in,right=1.2in,top=1in,bottom=2in}

\usepackage{amsmath}
\usepackage{amssymb}
\usepackage{amsthm}
\usepackage[latin1]{inputenc}
\usepackage{eurosym}
\usepackage[dvips]{graphics}
\usepackage{graphicx}
\usepackage{epsfig}

\usepackage{hyperref}


\usepackage{ifthen}


\newcommand{\Rr}{{\mathbb{R}}}

\newcommand{\Nn}{{\mathbb{N}}}

\newcommand{\rar}{\rightarrow}
\newcommand{\bi}{{\bold{i}}}

\newcommand{\bk}{{\bold{k}}}
\newcommand{\bn}{{\bold{n}}}

\newcommand{\Sss}{{\mathcal{S}}}
\newcommand{\Sdn}{{\mathcal{S}^d_N}}

\newcommand{\cqd}{\hfill $\blacksquare$}

\newtheorem{theorem}{Theorem}

\newtheorem{lemma}{Lemma}
\newtheorem{proposition}{Proposition}
\newtheorem{definition}{Definition}
\newtheorem{assumption}{Assumption}

 \newcommand{\re}{\mathbb{R}}

\def\cqd {\,  \begin{footnotesize}$\square$\end{footnotesize}}

\begin{document}

\title{Continuous time finite state mean field games }
\author{Diogo
  A. Gomes
\footnote{Departamento de Matem\'atica and CAMGSD, IST, Lisboa, Portugal. e-mail: dgomes@math.ist.utl.pt}
, Joana Mohr \footnote{Instituto de Matem\'atica, UFRGS, 91509-900 Porto Alegre, Brasil. e-mail: rafars@mat.ufrgs.br}
and Rafael Rig\~ao Souza \footnote{Instituto de Matem\'atica, UFRGS, 91509-900 Porto Alegre, Brasil. e-mail: joana.mohr@ufrgs.br}
}

\date{\today} 

\maketitle

\begin{abstract}

In this paper we consider  symmetric games where a large number of players can be in any one of d states. We derive a limiting mean field model and characterize its main properties.
This mean field limit is a system of coupled ordinary differential equations with initial-terminal data.
For this mean field problem we prove a trend to equilibrium theorem, that is convergence, in an appropriate limit, to stationary
solutions.
Then we study the $N+1$-player problem, which the mean field model attempts to approximate.
Our main result is the convergence as $N\to \infty$ of the mean field model and an estimate of the rate of convergence.
We end the paper with some further examples for potential mean field games.
\end{abstract}

\thanks{D. Gomes was partially supported by CAMGSD-LARSys through FCT, POCTI-FEDER, and by grants PTDC/MAT/114397/2009,
UTAustin/MAT/0057/2008, and UTA-CMU/MAT/0007/2009, and by the bilateral agreement Brazil-Portugal (CAPES-FCT) 248/09}

\thanks{R.R.S was partially supported by the bilateral agreement Brazil-Portugal (CAPES-FCT) 248/09}

\thanks{J.M was partially supported by the bilateral agreement Brazil-Portugal (CAPES-FCT) 248/09.}

\section{Introduction}

Mean field games is a recent area of research started by Peter Caines and his co-workers \cite{Caines1},
\cite{Caines2}, and independently  by Pierre Louis Lions and Jean Michel Lasry
\cite{ll1, ll2, ll3, ll4} which attempts to understand the limiting behavior of systems involving very large
numbers of rational agents which play dynamic games under partial information and symmetry assumptions.
Inspired by ideas in statistical physics, in this
class of models the individual player's contributions are encoded into a mean field
that contains all relevant statistical properties about the ensemble.

The literature on mean field games and its applications
is growing fast. For recent surveys see \cite{llg2} or \cite{pc}, and reference therein. Mean field games
arise in the study of growth theory in economics \cite{llg1, ML}, production of exaustible resources \cite{llg2}, or environmental policy \cite{lst}, for instance, and it is likely that in
the future they will play an important role in economics and population models. There
is also a growing interest in numerical methods for
these problems \cite{lst}, \cite{DY}. A related concept,
called oblivious equilibrium, corresponds to the case where players are assumed to make decisions based only
on its own state and knowledge of the long-run average industry state and stationary equilibrium models were introduced and studied in
detail in, respectively,
\cite{weintraub1} and \cite{weintraub2}.
Mean field models correspond to the limit of $N$ player games
under symmetry assumptions. The Markov perfect equilibrium notion for these games
has been studied (mostly in discrete time or stationary setting) in \cite{mpe1, mpe3, mpe4, mpe12}, and references therein. In
\cite{mpe5, mpe6} symmetric Markov perfect equilibrium are also considered, and in the last paper the case with an infinite number of players
is studied. In \cite{mpe11} the passage from discrete time to continuous time is considered for $N$ players in a war of attrition problem.
The techniques in the present paper are, however, substantially different from the above references.

In this paper we begin by presenting a mean field model for a continuous time  dynamic game between a large number of rational agents, which we call players.
These
are allowed to switch between a finite number of states, looking forward to optimize certain functionals, which depend on the statistical distribution of
the other players. We discuss the concept of Nash equilibrium, which allow us to derive a system of ordinary differential equations for
the distribution of the players, as well their value function.
 After, we consider the $N+1$-player game, which corresponds to the previous problem before taking the mean field limit. In the $N+1$-player game each player knows only the state but not the identity of the remaining players.
  We are particularly interested in understanding
the limit of the $N+1$ player game as the number of players increases to infinity.

In discrete time, finite number of state mean field models were studied in \cite{GMS}.
In his PhD thesis, \cite{GueantT}, O. Gu\'eant
considered a problem with two states, modeling the labor market.
In this work he considered a continuum of individuals and a  labor market consisting of 2
sectors. Each individual has to decide on which sector
he or she is going to work.
This model consists in a coupled
systems of ordinary differential equations of the type that will be derived in \S\ref{mfmsec}.
More recently in \cite{Gueant1}, and \cite{Gueant2} several discrete state problems have been also studied in detail,
namely its connection with systems of conservation laws.
Further
models with discrete state space were also considered in \cite{tembine9}, \cite{tembine2}. In these models,
 each individual in a large population
interacts with randomly selected players. This
 interaction determines the instantaneous payoff for all involved players. In particular these authors establish several very interesting limit
results. We should note, however that these last works do not study mean field games in the sense of this paper, namely they
lack the forward-backward structure of the equilibrium as in the works by Caines, Lions-Lasry, among others.

We start in \S\ref{mfmsec} by describing the mean field game. We derive a mean field model for the optimal switching policy of a reference player
given the fraction $\theta(t)\in [0,1]^d$ of players in each of $d$ states. Then we introduce the concept
of Nash equilibrium.
This equilibrium turns out to be determined by
a coupled system of ordinary differential equations, where one equation governs the evolution of
$\theta$, and is subjected to initial conditions, whereas the other equation models
the evolution of a value function and has terminal data. We call this problem the initial-terminal
value problem. These models are similar to the ones in \cite{Gueant1}, \cite{Gueant2}.
Initial terminal value problems are in fact a general feature in many mean field game
problems, see for instance \cite{ll1, ll2, ll3}, though not very common in ODE problems. In fact, existence and uniqueness of solutions
is not immediate from the general ODE theory but, adapting the methods of Lions and Lasry we were successful
in establishing both.
We also study a class of contractive mean field games for which a-priori bounds on suitable norms can be established. In particular under this condition one can prove existence of stationary solutions.
 The main result of this section is a trend to equilibrium theorem, in the spirit of
the results in \cite{GMS}. The proof relies on a reverse Gronwall inequality (i.e. when an integral of a function is controlled by the function at the endpoints).

In \S\ref{nplayer} we consider the Nash equilibrium problem before taking a mean field limit, i.e. with a finite number, $N+1$, of players. 
As before we suppose that all players are identical and so the game is symmetric with respect to permutation of the players. We adopt the point of view of a reference
player, which could be chosen as any one of the players. 
We assume that this player (as any other) has access to the same information, namely, his/her own state at time $t$, given by $\bi_t\in \{1,2,3,...,d\}$,
and the number
$\bn_t\in \Nn^d$ of remaining players that are in the other states.
The objective of the reference player is to minimize, by controlling the process $\bi_t$, and given the
process $\bn_t$, the expected value of the integral of a running cost function added to a terminal cost.
We assume that both $\bi_t$ and $\bn_t$ are controlled non-time homogeneous coupled Markov chains. 
More precisely, we suppose that $N$ of
the players have a fixed Markov switching strategy $\beta$, known by the reference player, which then chooses a
switching strategy $\alpha(\beta)$. This is a well know
Markov decision problem. The Nash equilibrium corresponds to $\alpha(\beta)=\beta$, which can be characterized by a
system of ordinary
differential equations.
In this setting the equilibrium is characterized by a system of ordinary differential equations with a terminal condition. This system, as explained in \S\ref{lastsection}, can be seen as a discretized version of a partial differential equation (introduced by Lions in his course in College de France and further studied by Gueant \cite{Gueant1, Gueant2})
for the value function that can be derived for the mean field model, as an alternative to the initial-terminal value problem formulation.
In addition to this characterization we prove various bounds, uniformly on $N$, which then allow
to address the passage to the limit problem, in \S\ref{convsec}.

In \S\ref{convsec} we prove the main result of the paper, Theorem \ref{teoconv},
which is
the convergence as the number of players $N\to \infty$ in $L^2$ of the $N+1$-player model to the mean field model of \S\ref{mfmsec}. In a different setting, convergence to MFG model was established by \cite{KLY} using very interesting techniques from non-linear Markov chains. We should note that the techniques in that paper do not apply to the problem we consider were, as our problem has a different structure. Our convergence result, gives, for small $T$, a rate of convergence of the order $\frac 1 {\sqrt{N}}$. For the proof we not need monotonicity assumptions. In particular this implies
uniqueness of solution to the mean field problem for small time. Our proof uses a double Gronwall-type inequality where part of the integrand can be estimated forward in time, whereas other part can only be estimated backwards in time.

In \S\ref{lastsection} we end this paper with an important class of examples, namely potential mean field games.
These have been studied in detail by Pierre Louis Lions (College de France course) and also in \cite{Gueant1, Gueant2}.
For these mean field games several connections with Hamiltonian and Lagrangian dynamics can be derived which have interesting applications to planning problems. We also discuss a variational formulation in analogy to the results in \cite{GMorgado} and \cite{GPSM}, as well
as some connections with partial differential equations, numerical methods and Hamilton-Jacobi equations.

\section{A mean field model}
\label{mfmsec}

In this section we derive a mean field model which, as we will show later, corresponds to the limit as the number of players
tends to infinity
of symmetric dynamic games with a finite number of players.

We consider a continuous time dynamic game where a large number of players can be in any of $d$ states. The players can switch from state
to state and their decisions depend on certain
optimality criteria which we will describe in the following. We suppose that all players are identical and so the game is symmetric with
respect to permutation of the players. Players only know its own position and the fraction of players in each of the $d$ states.
Each player can control the transition rate from one state to another and incurs in both a running cost
and a terminal cost which depends on  its own state, on the state of the other players
(through its distribution among states and not on individual player's states)
as well as on the controls the player chooses.

We will fix one of the players which will be called the reference player.
Because the game is symmetric, the identity of this
player is not important, and all other players have access to similar information. 
We further assume the mean field hypothesis, that is, since the number of players
is very large, the only information available to the reference player is the
distribution of players given by a probability vector $\theta\in \Sss^d$, where $\mathcal{S}^d$ is the probability
simplex
$$\left\{
  \begin{array}{l}
    \theta^1 + ... + \theta^d = 1\,,\\
    \theta^i\geq 0 \quad \forall i,  \; 1\leq i \leq d\,.
  \end{array}
\right.$$
Under the mean field hypothesis, the evolution
of the vector $\theta$ can be approximated by an ordinary differential equation
as discussed in \S \ref{ctmpke}.





\subsection{Continuous time Markov process and the Kolmogorov equation}
\label{ctmpke}

We suppose that the players distribution among states is given by a probability vector $\theta(t) \in \mathcal{S}^d$.
Let $\beta(t) \in \re^{d \times d}$ represent a transition rate matrix depending on the time $t$, where $\beta_{ij}(t) \geq 0$ if $i \neq j$, and
 $\beta_{ii}\equiv-\sum_{j\neq i} \beta_{ij}\,.$
We assume that
 the players switch from state to state according to a continuous time (inhomogeneous) Markov process with transition rate matrix $\beta$, which for now we suppose it is known. In the mean field limit,
 the fraction of players in each state $\theta$ satisfies the Kolmogorov equation
\begin{eqnarray}
  \frac{d\theta^{i}}{dt}  &=& \sum_j \theta^j \beta_{ji}\,. \label{kolmogorov}
 \end{eqnarray}
The previous equation is complemented
by an initial condition $\theta(0)=\theta_0 \in \mathcal{S}^d$ from which the evolution of the distribution of players  $\theta^{\beta}:[0,T]\rar \mathcal{S}^d$
is completely determined.
For convenience,  controls are also identified with a vector $\beta(i)\in \re^d$ with the convention that $\beta_j(i)=\beta_{ij}$,
where $\beta_j(i)$ denotes the
$j-$th coordinate  of  $\beta(i)$\,.


\subsection{Running and terminal costs}

We fix now a reference player and consider the optimization problem according to his/her point of view.
We assume that the state of this player is driven by a continuous time discrete state optimal control
problem in which he/she controls the switching rates from state to state.
These switching rates are chosen in order to minimize a certain cost which is the sum of
a running cost and a terminal cost.  The running cost depends on
the player's state, the switching rate, and
the fraction of players in each state. The terminal cost depends on the player's terminal state as well as the terminal distribution of players among states.

Let $I_d=\{1,2,3,...,d\}$. The running cost of the reference player whose state is $i$ is given by a cost $c:I_d\times \mathcal{S}^d\times (\re^+_0)^d\to \re$, $c(i,\theta,\alpha)$,
where $\theta\in \mathcal{S}^d$ is the probability distribution of players among states,
and $\alpha_{j}$ is the transition rate the reference player uses to change from state $i$ to state $j$.
We suppose $c$ is
Lipschitz continuous in $\theta$,
with the Lipschitz constant (with respect to $\theta$)  bounded independently of $\alpha$.
We  suppose $c$ is differentiable with respect to $\alpha$, and that $\frac{\partial c}{\partial \alpha}(i, \theta, \alpha)$,
is Lipschitz with respect to $\theta$, uniformly in $\alpha$.

We also suppose  that $c(i,\theta,\alpha)$ does not depend on $\alpha_{i}$, is uniformly convex (on the remaining coordinates),
that is, for any $i\in I_d$, $\theta\in \Sss^d$, $\alpha,  \alpha'\in (\Rr_0^+)^d$, with $\alpha_j \neq  {\alpha'}_{j}$, for
some $j\neq i$,
\begin{equation}
\label{convc}
c(i,\theta,\alpha\,')-c(i,\theta,\alpha)\geq \nabla_{\alpha} c(i,\theta,\alpha)\cdot(\alpha\,'-\alpha)+\gamma\|\alpha\,'-\alpha\|^2.
\end{equation}
We suppose that $c$ is superlinear on $\alpha_{j}$, $j \neq i$, that is,
\[
\lim_{\alpha_j\to\infty} \frac{c(i, \theta,  \alpha)}{\|\alpha\|}\to \infty.
\]
The reference player has a terminal cost denoted by  $\psi:I_d\times \mathcal{S}^d\to \re$, $\psi^i(\theta)$. We suppose $\psi$ is
Lipschitz continuous in $\theta$, with the Lipschitz constant (with respect to $\theta$) bounded independently of $\alpha$.

\subsection{Single player control problem: the value function}

Let $T>0$ be the time duration of game.
Suppose the players are distributed among the $d$ states according to the distribution probability $\theta:[0,T]\to \mathcal{S}^d$,
which for now we assume to be known by the reference player.
Let
\[
 u_{\theta}^i(t,\alpha)=\mathbb{E}_{\bi_t=i}^{\alpha} \left[\int_t^T c(\bi_s,\theta(s),\alpha(s)) ds + \psi^{\bi_T}(\theta(T))\right]\,.
\]
We define the value function associated to $\theta$, denoted by $u_{\theta}:I_d\times[0,T]\to \re$, as
\begin{equation}\label{u}
    u_{\theta}^i(t)=\min_{\alpha} u_{\theta}^i(t,\alpha),
\end{equation}
where $\bi_s$ is a continuous time Markov chain controlled by $\alpha$ which corresponds to the state of the reference player at time $s$,
and $\mathbb{E}_{\bi_t=i}^{\alpha}$ is the expectation conditioned on the event $\bi_t=i$, given the transition rate $\alpha$.
Here the minimization is performed over Markovian controls $\alpha(s)=\alpha(\bi_s,s)$.
More precisely
\begin{equation*}
    \mathbb{P}[\bi_{s+h}=j | \bi_s] = \alpha_j(s) h + o(h)
\end{equation*}
where $\lim_{h\to 0} \frac{o(h)} h = 0$.
In \S\ref{verifMFG} existence of optimal Markovian controls will be proved. 



\subsection{Definitions and preliminary results}

Let $\Delta_i:\re^d\rar\re^d$ be the difference operator on $i$, given by
$$
\Delta_iz=(z^1-z^i,...,z^d-z^i)\,.
$$
The infinitesimal generator of a finite state continuous time Markov chain, with transition rate $\nu_{ij}$, acting on  a function $\varphi:I_d\to \re$, is given by
$$ A^{\nu}_i(\varphi)= \sum_{j} \nu_{ij} (\varphi^j-\varphi^i)= \nu_{i\cdot} \cdot \Delta_i\varphi \,.$$

We define the generalized Legendre transform of the function $c(i,\theta,\cdot)$, as
\begin{eqnarray}
 \nonumber
  h(z,\theta,i)  &=&     \min_{\mu \in (\re^+_0)^d} c(i,\theta,\mu)+ \sum_j \mu_j (z^j-z^i)
 \\
   &=&  \min_{\mu \in (\re^+_0)^d} c(i,\theta,\mu)+ \mu \cdot \Delta_iz\,. \label{LegendreTransform}
\end{eqnarray}
Because of the superlinearity and uniform convexity of $c$ the function
\begin{eqnarray}\label{defalphaestrela}
   \alpha^*(z,\theta,i) &=&  \mbox{argmin}_{\mu \in (\re^+_0)^d}  c(i,\theta,\mu)+ \mu\cdot \Delta_iz \,
\end{eqnarray}
is well defined, except for its $i-$th coordinate, since $(\Delta_iz)^i=0$.
We
will denote the $j$-th entry of the vector $\alpha^*(z,\theta,i)$ as $\alpha_j^*(z,\theta,i)$, and for convenience and definitness we set
\begin{equation}\label{somazero}
    \alpha_i^*(z,\theta,i) \equiv - \sum_{j\neq i} \alpha_j^*(z,\theta,i)\,.
\end{equation}
The definition \eqref{somazero} is consistent because $c(i,\theta,\alpha)$ does not depend on the $i$-th entry of the vector $\alpha$, and for
 that reason \eqref{defalphaestrela} does not define $\alpha_i^*(z,\theta,i)$.
The uniform convexity of $c(i,\theta,\cdot)$ shows that $\alpha^*$ is well defined. We will write $h(\Delta_iz,\theta,i)$ and $\alpha^*(\Delta_iz,\theta,i)$ to stress the fact that $h$ and $\alpha^*$ depend only on $\Delta_iz$.
Because \[h(\Delta_iz,\theta,i)=h(z,\theta,i)\] there is no ambiguity of this notation.


%



The following Proposition is proved in the Appendix.
\begin{proposition}\label{Lipschitz0}
We have
\begin{itemize}
\item[a)]
If $h$ is differentiable, for $j\neq i$
\[
\alpha_j^*(\Delta_iz,\theta,i) =
\frac{\partial h(\Delta_iz,\theta,i) }{\partial z^j},
\]
furthermore, in general,
for all $z$ and $v$
\begin{equation}\label{cvx}
h(z+v,\theta,i)-h(z,\theta,i)\leq \sum_j \alpha^*_j(z,\theta,i)\,v^j,
\end{equation}
i.e. $\alpha^*_j(z,\theta,i)\in \partial^+_z h(z,\theta,i)$, where $\partial^+$ denotes the superdiferential.
\item[b)]
The function $\alpha^*$ is Lipschitz in $p$ and in $\theta$. The Lipschitz constants are uniform. More precisely,
$$\|\alpha^*(p\,',\theta,i)-\alpha^*(p,\theta,i)\|\leq \frac{1}{\gamma}\big\|p\,'-p\,\big\|\,\,\forall \,\, p, p\,', \theta,i,$$
and
$$\|\alpha^*(p,\theta,i)-\alpha^*(p,\theta\,',i)\|\leq \frac{K_c}{\gamma}\big\|\theta -\theta\,'\big\|\,,\,\,\forall \,\, p, \, \theta, \theta\,',i. $$ where $\gamma$ is the constant given by \eqref{convc} and $K_c$ is the Lipschitz constant of $\nabla_{\alpha} c$.
\item[c)] The function $h$ is locally Lipschitz in $p$ and in $\theta$. The Lipschitz constants are uniform if $\Delta z$ is bounded.
\end{itemize}
\end{proposition}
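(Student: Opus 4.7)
My plan is to base everything on two ingredients: the strong convexity estimate \eqref{convc} (which I will apply in its variational form to avoid complications at the boundary of $(\Rr_0^+)^d$), and the observation that $\alpha^*(z,\theta,i)$, while optimal at $(z,\theta)$, is merely feasible at any nearby $(z',\theta')$, so substituting it there gives an upper bound on $h(z',\theta',i)$. Throughout I will keep in mind that the $i$-th coordinate of $\alpha$ plays no role in the minimization and is only fixed \emph{a posteriori} by the convention \eqref{somazero}, and that $p^i = (\Delta_i z)^i = 0$, so both sides of every inequality are consistent with this convention.

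For part (a), starting from $h(z,\theta,i)=c(i,\theta,\alpha^*(z,\theta,i))+\alpha^*(z,\theta,i)\cdot \Delta_i z$, I use $\alpha^*(z,\theta,i)$ as a feasible (not necessarily optimal) vector in the definition of $h(z+v,\theta,i)$:
\[
h(z+v,\theta,i)\leq c(i,\theta,\alpha^*(z,\theta,i))+\alpha^*(z,\theta,i)\cdot \Delta_i(z+v)=h(z,\theta,i)+\alpha^*(z,\theta,i)\cdot \Delta_i v.
\]
Because $\sum_j \alpha^*_j(z,\theta,i)=0$ by \eqref{somazero}, the last term rewrites as $\sum_j \alpha^*_j(z,\theta,i)\,v^j$, which is exactly \eqref{cvx} and identifies $\alpha^*$ with an element of $\partial^+_z h$. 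If in addition $h$ is differentiable, dividing \eqref{cvx} by $\pm t$ for $v=t e_j$, $j\neq i$, and letting $t\to 0$ pins down $\partial h/\partial z^j=\alpha_j^*$.

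For part (b), I write $\alpha=\alpha^*(p,\theta,i)$ and $\alpha'=\alpha^*(p',\theta,i)$ and apply \eqref{convc} in the form
\[
c(i,\theta,\alpha')+\alpha'\cdot p\;\geq\;c(i,\theta,\alpha)+\alpha\cdot p+\gamma\|\alpha-\alpha'\|^2
\]
(the cross term $(\nabla_\alpha c(i,\theta,\alpha)+p)\cdot(\alpha'-\alpha)\geq 0$ because $\alpha$ is the constrained minimizer of $c(i,\theta,\cdot)+\cdot\,\cdot\, p$ over $(\Rr_0^+)^d$, even if some $\alpha_j=0$), together with the symmetric inequality obtained by swapping $(p,\alpha)\leftrightarrow(p',\alpha')$. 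Adding the two cancels the $c$-terms and yields $(p-p')\cdot(\alpha-\alpha')\geq 2\gamma\|\alpha-\alpha'\|^2$, so $\|\alpha-\alpha'\|\leq \|p-p'\|/(2\gamma)$. The $\theta$-estimate follows in the same way: with $\alpha=\alpha^*(p,\theta,i)$, $\alpha'=\alpha^*(p,\theta',i)$, adding the two strong-convexity inequalities (one in $\theta$, one in $\theta'$) gives
\[
2\gamma\|\alpha-\alpha'\|^2\leq [c(i,\theta,\alpha')-c(i,\theta,\alpha)]-[c(i,\theta',\alpha')-c(i,\theta',\alpha)],
\]
and the right-hand side equals $\int_0^1[\nabla_\alpha c(i,\theta,\mu_t)-\nabla_\alpha c(i,\theta',\mu_t)]\cdot(\alpha'-\alpha)\,dt$ with $\mu_t=\alpha+t(\alpha'-\alpha)$, bounded by $K_c\|\theta-\theta'\|\,\|\alpha-\alpha'\|$ by hypothesis. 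Finally, for part (c) I use \eqref{cvx} together with its mirror (obtained by swapping $z$ and $z+v$) to get $|h(z+v,\theta,i)-h(z,\theta,i)|\leq \max(\|\alpha^*(z,\theta,i)\|,\|\alpha^*(z+v,\theta,i)\|)\,\|\Delta_i v\|$, and part (b) bounds $\|\alpha^*\|$ uniformly whenever $\Delta z$ stays in a bounded set; for the $\theta$-Lipschitz bound of $h$, the same ``feasible but not optimal'' trick gives $h(z,\theta,i)-h(z,\theta',i)\leq c(i,\theta,\alpha^*(z,\theta',i))-c(i,\theta',\alpha^*(z,\theta',i))$ which is $\leq L_c\|\theta-\theta'\|$ by the uniform Lipschitz continuity of $c$ in $\theta$ (and symmetrically).

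The only genuine technical issue I anticipate is the possibility that the minimizer $\alpha^*$ lies on the boundary of $(\Rr_0^+)^d$, where the first-order condition becomes an inequality; this is why I deliberately avoid the identity $\nabla_\alpha c(i,\theta,\alpha^*)+p=0$ in part (b) and use instead the variational characterization $(\nabla_\alpha c(i,\theta,\alpha^*)+p)\cdot(\mu-\alpha^*)\geq 0$ for all feasible $\mu$, which remains valid at the boundary and is all that is needed in the strong-convexity manipulation above.
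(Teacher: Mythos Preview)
Your argument is correct and follows the same underlying ideas as the paper's proof: the ``feasible but not optimal'' trick for (a), strong convexity plus the first-order variational inequality $(\nabla_\alpha c(i,\theta,\alpha^*)+p)\cdot(\mu-\alpha^*)\geq 0$ for (b), and then deducing (c) from (a) and (b).

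The execution differs in two minor but worthwhile ways. In (b) the paper applies the strong-convexity inequality once (e.g.\ at $\alpha^*(p)$ with linear term $p'$) and arrives at $\|\alpha^*(p')-\alpha^*(p)\|\leq \|p-p'\|/\gamma$; you symmetrize by also applying it at $\alpha^*(p')$ and adding, which cancels the $c$-terms and gives the sharper constant $1/(2\gamma)$ (and likewise $K_c/(2\gamma)$ in $\theta$). Note a harmless sign slip: after adding you should get $(\alpha'-\alpha)\cdot(p-p')\geq 2\gamma\|\alpha'-\alpha\|^2$, not $(p-p')\cdot(\alpha-\alpha')$, but the Lipschitz conclusion is unaffected. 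In (c) the paper invokes the composition formula $h=c(i,\theta,\alpha^*)+\alpha^*\cdot p$ and chains Lipschitz bounds, whereas your route via the two-sided superdifferential bound from (a) for the $p$-variable and the direct feasibility inequality $h(z,\theta,i)-h(z,\theta',i)\leq c(i,\theta,\alpha^*(z,\theta',i))-c(i,\theta',\alpha^*(z,\theta',i))$ for the $\theta$-variable is slightly more direct and avoids differentiating the composition.
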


\subsection{Hamilton-Jacobi equation and a Verification Theorem}\label{verifMFG}

We continue to assume that $\theta:[0,T]\to \Sss^d$ is given.
As in classical optimal control we introduce now the Hamilton-Jacobi ODE:
\begin{equation}\label{HJ}
\begin{cases}
    -\frac{d u^i}{dt} = h(\Delta_iu,\theta,i) , \\
    u^i(T)=\psi^i(\theta(T)).
\end{cases}
\end{equation}
This is a terminal value problem (TVP) consisting of a system of $d$ coupled ODE´s with a terminal condition given by $\psi$.
It turns out, as Theorem \ref{MFG:VerT} states, that the solution to this ODE is the value function. Before proving Theorem \ref{MFG:VerT}
we begin by proving a maximum principle for the equation \eqref{HJ}, which will be also used to prove existence and uniqueness.

\begin{proposition}\label{boundedness}
If $u$ is a solution to the HJ equation \eqref{HJ},   and $\displaystyle M=\max_{(i, \theta)\in I_d\times \mathcal{S}^d} |h(0, \theta, i)|.$
Then for all $0\leq t \leq T$ we have
$$
 \|u(t)\|\leq \|u(T)\|+2M (T-t)\,,
$$
where $\displaystyle \|u(t)\|=\max_{i \in I_d} \{|u^1(t)|, \hdots, |u^d(t)|\}$.
\end{proposition}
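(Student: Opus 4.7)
The plan is to establish a comparison principle based on a monotonicity property of $h$ that is built into its definition. The key structural observation is that, because $h(z,\theta,i)=\min_{\mu\in(\mathbb{R}_0^+)^d}\bigl[c(i,\theta,\mu)+\mu\cdot z\bigr]$ with the controls $\mu$ ranging over \emph{non-negative} vectors, the map $z\mapsto h(z,\theta,i)$ is componentwise non-decreasing: if $z\le z'$ entry-by-entry then $\mu\cdot z\le \mu\cdot z'$ for every admissible $\mu$, so the infimum over $\mu$ enjoys the same ordering. I would state this monotonicity as a one-line lemma before proceeding.

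Next I would introduce the two envelopes $w(t):=\max_{i\in I_d} u^i(t)$ and $m(t):=\min_{i\in I_d} u^i(t)$, both Lipschitz in $t$ since each $u^i$ is $C^1$. Pick a time $t_0$ and let $i_0$ realize $u^{i_0}(t_0)=w(t_0)$; then $(\Delta_{i_0} u)^j = u^j(t_0)-u^{i_0}(t_0)\le 0$ for every $j$. Monotonicity of $h$ together with the definition of $M$ gives
\[
h(\Delta_{i_0} u,\theta(t_0),i_0)\;\le\;h(0,\theta(t_0),i_0)\;\le\;M,
\]
so from the HJ equation \eqref{HJ} one reads off $\frac{du^{i_0}}{dt}(t_0)\ge -M$. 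By Danskin's theorem applied to a maximum of finitely many $C^1$ functions, this implies $w'(t)\ge -M$ at almost every $t$, hence upon integrating backward from $T$,
\[
w(t)\;\le\;w(T)+M(T-t)\;\le\;\|u(T)\|+M(T-t).
\]

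A completely symmetric argument at the minimizing index $i_1$, where $\Delta_{i_1}u\ge 0$, gives $h(\Delta_{i_1}u,\theta,i_1)\ge h(0,\theta,i_1)\ge -M$, hence $m'(t)\le M$ a.e.\ and therefore $m(t)\ge m(T)-M(T-t)\ge -\|u(T)\|-M(T-t)$. Combining the two one-sided bounds yields $|u^i(t)|\le \|u(T)\|+M(T-t)$ for every $i$, which is in fact strictly sharper than the stated estimate (the factor $2$ is slack). The only mild technical point is handling the non-smoothness of $w$ and $m$ at times where the set of active indices changes; Danskin's theorem covers this, or one can argue elementarily with upper and lower Dini derivatives at the finitely many crossing times. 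I do not anticipate any real obstacle beyond this routine bookkeeping.
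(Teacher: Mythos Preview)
Your argument is correct and rests on the same key observation the paper uses: since the feasible controls $\mu$ are coordinatewise nonnegative, the map $p\mapsto h(p,\theta,i)$ is monotone, so at an extremal index $\Delta_i u$ has a sign and $h(\Delta_i u,\theta,i)$ is bounded by $h(0,\theta,i)$. The paper's proof packages this same idea slightly differently: rather than differentiating the envelopes $w(t)=\max_i u^i(t)$ and $m(t)=\min_i u^i(t)$ in time, it perturbs to $\tilde u=u+\rho(T-t)$ and shows that for $M<\rho<2M$ the global extremum of $\tilde u$ over $I_d\times[0,T]$ cannot occur at an interior time, hence lies at $t=T$. This penalization trick sidesteps the Danskin/Dini bookkeeping you mention, at the cost of the slack factor $2$ (which, as you observe, could be removed by letting $\rho\downarrow M$). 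Your envelope argument is a touch more direct and delivers the sharper constant $M$ in place of $2M$; the paper's version is perhaps cleaner to write since it avoids any discussion of non-smoothness of the max. Both are standard maximum-principle arguments and the difference is cosmetic.
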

\begin{proof}
Let $u$ be a solution to \eqref{HJ}. Let $\tilde u=u+\rho (T-t)$. Then
$$
- \frac{d\tilde u^i}{dt}  =
  h\left(  \Delta_i \tilde u,\theta,i\right)+\rho\,.
$$
Let $(i,t)$ be a minimum point of $\tilde u$ on $I_d \times [0,T]$.
We  have $\tilde u^j(t)-\tilde u^i(t)\geq 0$ hence $\Delta_i \tilde u= (\tilde u^1(t)-\tilde u^i(t),...,\tilde u^d(t)-\tilde u^i(t))\geq 0$.
Therefore
$$-\frac{d\tilde u^i}{dt}(t)  = h\left( \Delta_i \tilde u, \theta, i\right)+\rho\geq h\left(0,\theta, i\right)+\rho \,,$$
because
if $\Delta_i p\geq 0$ we have
\[
h(\Delta_i p, \theta, i)\geq h(0,\theta, i),
\]
since $\alpha^*\geq 0$.
Furthermore, if we take $M<\rho< 2M$ we get
\[
-\frac{d\tilde u^i}{dt}(t)>0.
\]
This shows that the minimum of $\tilde u$ is achieved at $T$ hence
\[
u^i(t) \geq -\|u(T)\|-2M (T-t).
\]

Similarly, let
$(i,t)$ be a maximum point of $\tilde u$ on $I_d\times [0,T]$.
In this case we  have
$\Delta_i \tilde u\leq 0$.
Hence
$$-\frac{d\tilde u^i}{dt}(t)  = h\left(\Delta_i \tilde u, \theta, i\right)+\rho\leq h\left(0,\theta, i\right)+\rho \,.$$
Furthermore, if we take $-2 M < \rho <-M$ we get
\[
-\frac{d\tilde u^i}{dt}(t)<0.
\]
This shows that the maximum of $\tilde u$ is achieved at $T$ hence
\[
u^i(t)\leq \|u(T)\|+2M (T-t).
\]
\end{proof}

As a consequence of the last Proposition (and also using that $h$ is Lipschitz), Picard Theorem allow us to state

\begin{proposition}
The terminal value problem (TVP) given by   \eqref{HJ}
has an unique solution.
\end{proposition}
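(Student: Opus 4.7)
The plan is to view \eqref{HJ} as a system of $d$ ODEs running backward from $T$ to $0$, and to combine the local Lipschitz regularity of $h$ from Proposition \ref{Lipschitz0}(c) with the a priori bound of Proposition \ref{boundedness} to promote local well-posedness into global existence and uniqueness on $[0,T]$.

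First I would fix a large radius $R>0$ to be chosen later and truncate: consider a modified right-hand side $\tilde h$ that agrees with $h(\Delta_i u,\theta(t),i)$ whenever $\|u\|\leq R$ and is globally Lipschitz in $u$ (extended in any smooth bounded way beyond that ball). Since $\theta$ is continuous in $t$ and $h$ is locally Lipschitz in its first argument with uniform constant on bounded sets (Proposition \ref{Lipschitz0}(c)), the function $\tilde h$ is globally Lipschitz in $u$ and continuous in $t$. The classical Picard--Lindelöf theorem then yields a unique solution $\tilde u$ of the truncated TVP on the whole interval $[0,T]$ with terminal condition $\tilde u(T)=\psi(\theta(T))$.

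Next I would remove the truncation via the a priori bound. Any solution of the non-truncated equation \eqref{HJ} satisfies, by Proposition \ref{boundedness},
\begin{equation*}
\|u(t)\|\leq \|\psi(\theta(T))\|+2M\,T\qquad\text{for all }t\in[0,T].
\end{equation*}
Choosing $R> \|\psi(\theta(T))\|+2MT$, the truncated solution $\tilde u$ coincides with a genuine solution of \eqref{HJ}: indeed, since $\tilde u(T)=\psi(\theta(T))$ lies well inside the ball of radius $R$, and since the same maximum-principle argument used for Proposition \ref{boundedness} applies to $\tilde u$ as long as $\|\tilde u(t)\|\leq R$, a continuity argument shows $\|\tilde u(t)\|\leq R$ for all $t\in[0,T]$. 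On this range the truncation is inactive, so $\tilde u$ solves \eqref{HJ} and existence is established on $[0,T]$.

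For uniqueness, suppose $u$ and $v$ are two solutions of \eqref{HJ}. By Proposition \ref{boundedness} both stay inside a common ball, on which $h$ is Lipschitz in its first argument with some constant $L$. Setting $w(t)=\|u(t)-v(t)\|$ and using that each $\Delta_i$ is a bounded linear operator on $\mathbb R^d$, one gets
\begin{equation*}
\left|\frac{d}{dt}(u^i-v^i)(t)\right|\leq L'\,w(t),
\end{equation*}
for some constant $L'$ independent of $i$, with $w(T)=0$. A standard Gronwall argument, run backward from $T$, forces $w\equiv 0$. The main (mild) obstacle is simply the bookkeeping of keeping solutions inside the region where the Lipschitz estimate of Proposition \ref{Lipschitz0}(c) is uniform; once the a priori bound is invoked, the rest reduces to textbook ODE arguments.
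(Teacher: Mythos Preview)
Your argument is correct and is exactly the approach the paper has in mind: combine the a~priori bound of Proposition~\ref{boundedness} with the local Lipschitz regularity of $h$ from Proposition~\ref{Lipschitz0}(c) and invoke Picard. The paper states this in a single line without details, so your truncation/Gronwall write-up simply spells out what the authors leave implicit.
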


Now we prove a verification Theorem:

\begin{theorem}\label{MFG:VerT} Suppose $u:I_d \times [0,T] \rar \re$ is a solution to  the Hamilton-Jacobi terminal value problem \eqref{HJ}. Then $u$ is the value function associated to the distribution $\theta$, and
$$\tilde \alpha(i,s) \equiv \alpha^*(\Delta_iu(s), \theta(s), i)$$
is an optimal Markovian control.
\end{theorem}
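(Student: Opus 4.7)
The plan is a classical verification argument: evaluate $u$, the solution of the Hamilton--Jacobi terminal value problem \eqref{HJ}, along the controlled Markov chain $\bi_s$ and use Dynkin's formula to compare the resulting expression with the cost functional $u^i_\theta(t,\alpha)$.

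First, I would fix an arbitrary admissible Markovian control $\alpha(s)=\alpha(\bi_s,s)$ and apply Dynkin's formula to the time-dependent function $u(\bi_s,s)$ between $t$ and $T$. Using the HJ equation and the notation of \S 2.4, this yields
\begin{equation*}
\mathbb{E}^{\alpha}_{\bi_t=i}\!\left[u^{\bi_T}(T)\right]-u^i(t)=\mathbb{E}^{\alpha}_{\bi_t=i}\!\left[\int_t^T\Big(\tfrac{d u^{\bi_s}}{ds}+A^{\alpha(\bi_s,s)}_{\bi_s}u\Big)\,ds\right].
\end{equation*}
Now observe that for every $j\in I_d$, $\theta$ and every admissible rate vector $\mu$,
\[
h(\Delta_j u,\theta,j)\leq c(j,\theta,\mu)+\mu\cdot\Delta_j u
\]
by the very definition \eqref{LegendreTransform} of $h$. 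Plugging this into \eqref{HJ} gives
$\tfrac{d u^j}{dt}+A^{\mu}_j u\geq -c(j,\theta,\mu)$, with equality whenever $\mu=\alpha^*(\Delta_j u,\theta,j)$. Since $u^{\bi_T}(T)=\psi^{\bi_T}(\theta(T))$, feeding this inequality back into Dynkin's identity produces
\[
u^i(t)\leq \mathbb{E}^{\alpha}_{\bi_t=i}\!\left[\int_t^T c(\bi_s,\theta(s),\alpha(s))\,ds+\psi^{\bi_T}(\theta(T))\right]=u^i_\theta(t,\alpha),
\]
and upon taking the infimum over $\alpha$ we obtain $u^i(t)\leq u^i_\theta(t)$.

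For the reverse inequality I would plug in the candidate control $\tilde\alpha(j,s)=\alpha^*(\Delta_j u(s),\theta(s),j)$. By the choice of $\alpha^*$ as the argmin in \eqref{defalphaestrela}, each one-sided inequality above becomes an equality pointwise, so the Dynkin computation produces $u^i(t)=u^i_\theta(t,\tilde\alpha)\geq u^i_\theta(t)$. Combined with the previous step, this gives $u^i(t)=u^i_\theta(t)$ and simultaneously shows $\tilde\alpha$ is optimal.

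\textbf{Main obstacle and technical points.} The genuine technical point is ensuring that $\tilde\alpha$ actually defines an admissible non-homogeneous Markov chain over $[t,T]$, so Dynkin's formula is legitimate and the cost is finite. This is handled by combining Proposition \ref{boundedness} (which bounds $\|u(s)\|$, hence $\|\Delta_i u(s)\|$, uniformly on $[0,T]$) with the Lipschitz estimates on $\alpha^*$ in Proposition \ref{Lipschitz0}(b); together they give that the switching rates $\tilde\alpha(\cdot,s)$ are uniformly bounded and continuous in $s$, so the controlled chain is well defined and the expectation in Dynkin's formula is finite. A secondary subtlety is that $h$ need not be differentiable everywhere, but the argument above only uses the one-sided inequality from \eqref{LegendreTransform} (or equivalently the superdifferential relation \eqref{cvx}), so no smoothness of $h$ is required.
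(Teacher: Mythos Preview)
Your proposal is correct and follows essentially the same approach as the paper: both apply Dynkin's formula to $u$ along the controlled chain, use the defining inequality $h(\Delta_j u,\theta,j)\le c(j,\theta,\mu)+\mu\cdot\Delta_j u$ to obtain $u^i(t)\le u^i_\theta(t,\alpha)$ for arbitrary $\alpha$, and then observe equality for $\tilde\alpha=\alpha^*$. Your added remarks on the admissibility of $\tilde\alpha$ via Propositions \ref{boundedness} and \ref{Lipschitz0}(b) make explicit a point the paper leaves implicit, but the core argument is the same.
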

\begin{proof}

The main tool for proving Theorem \ref{MFG:VerT} is the Dynkin Formula (see \cite{koloko3}, for instance):
 suppose $\alpha$ is a Markovian control continuous in time.
 Define the infinitesimal generator of the process $\bi_s$ by
\begin{equation}\label{generator-mfg}  (A^{\alpha}\varphi)^i(s) = \sum_j \alpha_{ij}(s)[\varphi^j(s)-\varphi^i(s) ] \,.
 \end{equation}

  We have that,
for any function $\varphi:I_d  \times [0,+\infty) \rar \Rr$, $C^1$ in the last variable, and any $t<T$,
\begin{equation}\label{Dynkin-mfg}  \mathbb{E}^{\alpha}_{\bi_t=i} \left[\varphi^{\bi_T}(T)-\varphi^i(t)\right]
= \mathbb{E}^{\alpha}_{\bi_t=i} \left[ \int_t^T
\frac{d\varphi^{\bi_s}}{dt} (s) + (A^{\alpha}\varphi)^{\bi_s}(s) ds  \right]\,,
 \end{equation}
where the superscript $\alpha$ means that $\bi_s$ is driven by the the control $\alpha$, while the subscript $\bi_t=i$ means we are considering the expectation conditioned on $\bi_t=i$.
We call \eqref{Dynkin-mfg} the Dynkin's formula in
analogy to the Dynkin's formula in stochastic calculus.

Now to prove the Theorem we make $\varphi=u$
in \eqref{Dynkin-mfg} .
Using the terminal condition $u^i(T)=\psi^{i}\left(\theta(T)\right)$ we have
that, for any control $\alpha$,
\begin{equation}\label{termD-mfg}
  \mathbb{E}^{\alpha}_{\bi_t=i} \left[\psi^{\bi_T}(\theta(T))-u^i(t)\right]
 = \mathbb{E}^{\alpha}_{\bi_t=i} \left[ \int_t^T
\frac{d u^{\bi_s} }{dt}(s)  + (A^{\alpha}u)^{\bi_s}(s)  ds  \right]\,.
\end{equation}
Now let $\alpha$ be any control.
In the next steps we will use the definition of $u_\theta^i(t,\alpha)$, given in \eqref{u}, and then \eqref{termD-mfg}, \eqref{generator-mfg}, and \eqref{LegendreTransform} to have
\begin{eqnarray}
  \nonumber
  u_{\theta}^i(t,\alpha) &=&\mathbb{E}^{\alpha}_{\bi_t=i} \left[\psi^{\bi_T}(\theta(T))+\int_t^T c(\bi_s,\theta(s),\alpha(s))ds\right]
\\ &=&
\nonumber
u^i(t) + \mathbb{E}^{\alpha}_{\bi_t=i} \left[ \int_t^T
\frac{d u ^{\bi_s}}{dt}(s)  + (A^{\alpha}u)^{\bi_s}(s) + c(\bi_s,\theta(s),\alpha(s))ds \right]\,
 \\ & \geq &
 \nonumber
    u^i(t) + \mathbb{E}^{\alpha}_{\bi_t=i} \left[ \int_t^T
\frac{d u^{\bi_s} }{dt}(s)  +
\min_{\mu\in (\re^+_0)^d} \sum_j \mu_j [u^j(s)-u^{\bi_s}(s)]+c(\bi_s,\theta(s),\mu) \right]
\\    &= &
\nonumber
u^i(t) + \mathbb{E}^{\alpha}_{\bi_t=i} \left[ \int_t^T
\frac{d u ^{\bi_s}}{dt}(s) +
h(\Delta_i u(s), \theta(s),\bi_s)
ds \right]
\\    &= &
\nonumber
u^i(t)\,,
\end{eqnarray}
where the last equation holds because $u$ is a solution to the  Hamilton-Jacobi equation \eqref{HJ}.
Note that in the particular case where $\alpha$ is given by  the specific control $\tilde \alpha(i,s) = \alpha^*(\Delta_i u_s,\theta_s,i)$, we have equality in the all the steps above, and therefore
we have $
  u_{\theta}^i(t,\tilde \alpha)=u^i(t)$
which show us that $\tilde \alpha$ is the optimal control and that the objective function  $u_{\theta}^i(t)$ is indeed given by $u^i(t)$.
\end{proof}

\subsection{Mean field Nash equilibria}
\label{nash_eq}
The mean field Nash equilibrium occurs when the background players are using a strategy $\beta$ for which the best response
of the reference player is $\beta$ itself, more precisely when the transition rate from $j$ to $i$ at time $s$ is given by 
\[
\beta_{ji}(s)=\alpha_{i}^*(\Delta_ju(s),\theta(s),j).
\]
The Nash equilibrium is then characterized by the system of Kolmogorov and Hamilton-Jacobi equations
\begin{equation}
\label{PVIT}
 \begin{cases}
   \frac{d}{dt}\theta^{i}  = \sum_j \theta^j \alpha_{i}^*(\Delta_ju,\theta,j) 
\\
  -\frac{d}{dt}u^i = h(\Delta_iu,\theta,i),
\end{cases}
\end{equation}
together with the initial-terminal conditions
\begin{equation}
\label{PVITDATA}
 \theta(0)=\theta_0 \qquad u^i(T)=\psi^i(\theta(T)).
\end{equation}

Note that from the ODE point of view this problem is somewhat non-standard as some of the
variables have initial conditions whereas other variables have prescribed terminal data. We call
this problem the initial-terminal value problem (ITVP) for the mean field game, and a solution of such ITVP is what we call a solution to the MFG given by $T, \theta_0,c, \psi$.

\bigskip

\subsection{Existence of Nash Equilibria in the MFG}
\label{existmfg}


We now address the existence of solutions to \eqref{PVIT} satisfying the initial-terminal conditions \eqref{PVITDATA}.
The proof of existence will be based upon a fixed point argument.

\begin{proposition}
There exists a solution to \eqref{PVIT} satisfying the initial-terminal conditions \eqref{PVITDATA}.
\end{proposition}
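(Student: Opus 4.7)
The plan is to apply a Schauder-type fixed point argument, following the standard MFG approach, with all the ingredients already established in the previous subsections.

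First I would define a map $\Phi$ on a suitable set of distribution trajectories. Let $K \subset C([0,T];\Rr^d)$ denote the set of absolutely continuous functions $\theta:[0,T]\to \Sss^d$ with $\theta(0)=\theta_0$ and Lipschitz constant bounded by a number $L$ to be chosen. Given $\theta \in K$, I would first solve the backward Hamilton-Jacobi terminal value problem \eqref{HJ} to obtain a unique $u_{\theta}$, which is available from the existence/uniqueness proposition immediately preceding this statement. By Proposition \ref{boundedness} one has a uniform bound $\|u_\theta(t)\| \leq \|\psi\|_\infty + 2MT =: R_0$, hence $\|\Delta_i u_\theta\| \leq 2R_0$ uniformly. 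Combining this with Proposition \ref{Lipschitz0}(b) (Lipschitz continuity of $\alpha^*$ in its first argument and the fact that $\alpha^*(0,\theta,i)$ is uniformly bounded by superlinearity of $c$) yields a uniform bound $\|\alpha^*(\Delta_j u_\theta, \theta, j)\|\leq A$ for some constant $A$ depending only on the data. I would then set $\Phi(\theta)=\tilde\theta$ where $\tilde\theta$ solves the forward Kolmogorov equation
\begin{equation*}
\frac{d\tilde\theta^i}{dt}=\sum_j \tilde\theta^j\,\alpha^*_i(\Delta_j u_\theta(t),\theta(t),j),\qquad \tilde\theta(0)=\theta_0 .
\end{equation*}

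Next I would verify that $\Phi$ maps $K$ into $K$ for $L:=dA$. The image stays in $\Sss^d$ because $\alpha^*_i \geq 0$ for $j\neq i$ and the diagonal convention \eqref{somazero} makes the rate matrix a valid generator, so standard invariance arguments preserve positivity and the normalization $\sum_i\tilde\theta^i=1$. The Lipschitz bound on $\tilde\theta$ follows from the bound $A$ on $\alpha^*$. The set $K$ is convex and, by Arzel\`a-Ascoli, compact in $C([0,T];\Rr^d)$.

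The continuity of $\Phi$ is the key technical step. If $\theta_n \to \theta$ uniformly, I would first argue $u_{\theta_n}\to u_{\theta}$ uniformly: writing the difference $w_n=u_{\theta_n}-u_\theta$ and using Proposition \ref{Lipschitz0}(c) (local Lipschitz continuity of $h$ in $p$ and $\theta$, uniform on the bounded set where $\Delta u$ lives), together with the Lipschitz dependence of $\psi$ on its terminal datum $\theta(T)$, a Gronwall estimate on the backward ODE yields $\|w_n\|_\infty \to 0$. Then the rate fields $\alpha^*(\Delta_j u_{\theta_n},\theta_n,j)$ converge uniformly to $\alpha^*(\Delta_j u_\theta,\theta,j)$ by Proposition \ref{Lipschitz0}(b), and a further Gronwall argument on the forward Kolmogorov ODE produces $\Phi(\theta_n)\to\Phi(\theta)$ uniformly.

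With these three properties in hand, the Schauder fixed point theorem furnishes a $\theta^*\in K$ with $\Phi(\theta^*)=\theta^*$, and by Theorem \ref{MFG:VerT} together with the definition of $\Phi$, the pair $(\theta^*,u_{\theta^*})$ solves the ITVP \eqref{PVIT}--\eqref{PVITDATA}. I expect the main obstacle to be the continuous dependence of $u_\theta$ on $\theta$: the backward equation has a nonlinear and only locally Lipschitz Hamiltonian $h$, so one needs the a priori $L^\infty$ bound from Proposition \ref{boundedness} to confine $\Delta u$ to a set where the Lipschitz estimates of Proposition \ref{Lipschitz0}(c) apply uniformly. Once that is settled, the rest is routine Gronwall and Schauder.
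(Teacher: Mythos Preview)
Your proposal is correct and follows essentially the same route as the paper: define a map $\theta\mapsto\tilde\theta$ by solving the backward HJ problem for $u_\theta$, extracting the optimal rate via $\alpha^*$, then solving the forward Kolmogorov equation; use Proposition~\ref{boundedness} to get a uniform Lipschitz bound on the image, invoke Arzel\`a--Ascoli for compactness, and apply a fixed-point theorem on the resulting convex compact set. The only notable difference is cosmetic: you correctly name the fixed-point theorem as Schauder (the paper writes ``Brouwer'' but is working in an infinite-dimensional function space), and you spell out the continuity of the map via explicit Gronwall estimates where the paper simply appeals to continuous dependence on parameters.
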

\begin{proof}

Let $\mathcal{F}$ be the set of continuous functions defined on $[0,T]$ and taking values in $\Sss^d$, with the $C^0$ norm.
Consider the function $\xi:\mathcal{F}\rar \mathcal{F}$ that is obtained in the following way:
given $\theta \in \mathcal{F}$, let $u_{\theta}$ be the solution of terminal value problem given by the Hamilton-Jacobi equations \eqref{HJ}
\begin{equation}
\begin{cases}
    -\frac{du^i}{dt} = h(\Delta_iu,\theta,i),  \\
    u^i(T)=\psi^i(\theta(T)).
\end{cases}
\end{equation}
We know $u_{\theta}$ depends continuously on the {\it parameters} $\theta$.

Now get the optimal control $\beta^{\theta}$ given by the Verification Theorem (Theorem \ref{MFG:VerT}):
\begin{equation}\label{optimalcontrolagain} \beta^{\theta} (i,t)=
  \mbox{argmin}_{\mu \in (\re^+_0)^d}  c(i,\theta,\mu)+ \mu\cdot \Delta_iu_{\theta} =\alpha^*(\Delta_i u_{\theta},\theta,i) \,
\,.\end{equation}

We use Proposition \ref{Lipschitz0} to conclude that $\beta^{\theta}$ is a continuous function of $u_{\theta}$, and therefore of $\theta$.

Finally, then let $\xi(\theta)$ be the solution to the  Kolmogorov equation \eqref{kolmogorov} given by the initial value problem:
\begin{equation}\label{kolmogorovagain}
\frac{d\theta^{i}}{dt}  = \sum_j \theta^j \beta_{ji}^{\theta}\,\,\,\,;\,\,\, \theta(0)= \theta_0\,.
\end{equation}

Such solution  $\xi(\theta)$ depends continuously on the parameters $\beta^{\theta}$, and therefore on $\theta$.

Therefore, using standard ODE theory we just proved that  $\xi$ is a continuous function from $\mathcal{F}$ to $\mathcal{F}$.

Now, using Proposition \ref{boundedness}, we see from \eqref{optimalcontrolagain} that  $\beta$ is bounded, with bounds that do not depend on $\theta$, and therefore from \eqref{kolmogorovagain} we have that $\xi(\theta)$ is Lipschitz, with Lipschitz  constant $\Lambda$ independent of $\theta$.

Now consider the set $\mathcal{C}$ of all Lipschitz continuous function in $\mathcal{F}$ with Lipschitz constant bounded
by $\Lambda$. This is a set of uniformly bounded and equicontinuous functions. Thus, by Arzela-Ascoli, it is a relatively compact set. It is also clear that it is a convex set.
Hence, by Brouwer fixed point Theorem, $\xi$ has a fixed point in $\mathcal{C}$.
\end{proof}

\subsection{The monotonicity hypothesis}

In order to prove the uniqueness of the MFG (\S \ref{uniq_MFG}), and also consider the convergence of solutions of MFG to stationary solutions (when $T \to \infty$ - see \S \ref{stat_uniq_conv} )
we need to introduce several monotonicity hypothesis as in the original works by Lions and Lasry.
We start with a definition:
\begin{definition}
Let  $v\in \re^d$, and set $\mathbf{1}=(1,...,1)\in\re^d$.  In $\re^d/\re$ we define the norm
$$\|v \|_{\sharp}=\inf_{\lambda\in \re}\|v +\lambda\mathbf{1} \|. $$
\end{definition}
Observe that $$\Delta_i u = \Delta_iv \;\;\forall \,1\leq i \leq d \quad\Leftrightarrow\quad \exists\; c\in \re \; \mbox{ such that }\; u=v+c\mathbf{1}
\qquad\Leftrightarrow\quad \|u-v\|_{\sharp}=0 \,.$$
Furthermore we have $$\|u\|_{\sharp}= \frac{\max_{i} (u^i)-\min_i (u^i)}{2}.$$

\begin{assumption}\label{a0} We suppose the following monotonicity hypothesis on $\psi$:
\begin{equation}\label{monotpsi}
\sum_{i}(\theta^i-\tilde\theta^i)(\psi^i(\theta)-\psi^i(\tilde\theta))\geq 0
\end{equation}
\end{assumption}

The previous assumption holds, for instance, if $\psi$ is the gradient of a convex function.

\begin{assumption}
\label{a1} We suppose that for every $M$, on the set $\|z\|_\sharp\leq M$
the function $\Delta_i z \to h(\Delta_i z)$  is uniformly concave in the non-degenerate directions, i.e., there exists $\gamma_i>0$ such that
\begin{equation}\label{concavity}
    h(\Delta_i z,\theta,i)-h(\Delta_i w,\theta,i)-\alpha^*(\Delta_i w,\theta,i)\cdot(\Delta_i z-\Delta_i w)\leq -\gamma_i\|\Delta_i z-\Delta_i w\|^2.
\end{equation}
\end{assumption}

\begin{assumption}
\label{a2} We also suppose that $h$ satisfies the following monotonicity property:
\begin{equation}\label{monot}
    \theta \cdot (h(z,\tilde \theta)-h(z,\theta))+ \tilde \theta\cdot(h(\tilde z,\theta)-h(\tilde z,\tilde \theta)   )\leq -\gamma \|\theta -\tilde \theta  \|^2,
\end{equation}
where $h(z,\theta):=(h(\Delta_1z,\theta,1),...,h(\Delta_dz,\theta,d)  ), $ and $\gamma>0$.
\end{assumption}


The last three hypothesis will be satisfied if $h$ can be written as
\[
h(\Delta_iz, \theta, i)=\tilde h(\Delta_i z,i)+f^i(\theta),
\]
with $\tilde h$ (locally) uniformly concave in the sense of \eqref{concavity} and $f$ satisfying the monotonicity hypothesis
\[
(f(\tilde \theta)-f(\theta))\cdot (\theta-\tilde \theta) \leq -\gamma |\theta-\tilde \theta|^2.
\]
The previous property holds, for instance, if $f$ is the gradient of a convex function $f(\theta)=\nabla \Phi(\theta)$.

\subsection{A key estimate}

The monotonicity hypothesis from the previous section can be used to establish both uniqueness of equilibrium solutions,
\S \ref{uniq_MFG},
and a trend to equilibrium type result \S\ref{stat_uniq_conv}. For convenience, rather than considering
the initial terminal value problem with initial values for $\theta$ at $t=0$ we consider the problem with initial values
at $t=-T$. This will be convenient when studying the trend to equilibrium, which corresponds to send $T\to \infty$
and analyzing the behavior of $(\theta_0, u_0)$.

\begin{lemma} Fix $T>0$ and suppose that $(\theta,u)$ and $(\tilde \theta,\tilde u)$ are solutions of \eqref{PVIT} with initial-terminal conditions $\theta(-T)=\theta_{-T}, u^i(T)=\psi^i(\theta(T))$  and  $ \tilde \theta(-T)=\tilde \theta_{-T} , \tilde u^i(T)=\psi^i(\tilde \theta(T))$.
Assume further that $\|u\|_\sharp, \|\tilde u\|_\sharp\leq C$.
Then
there exists a constant $C$ independent of $T$ such that, for all $0<M<T$, we have
\begin{align*}
&\int_{-M}^M  \|(\theta -\tilde \theta ) (s)\|^2+ \| (u-\tilde u)(s)\|_{\sharp}^2 ds\\&\quad \leq C\bigg(\|(\theta-\tilde \theta)(M)\|^2+\|(u-\tilde u)(M)\|_{\sharp}^2+\|(\theta-\tilde \theta)(-M)\|^2+\|(u-\tilde u)(-M)\|_{\sharp}^2\bigg).
\end{align*}
\end{lemma}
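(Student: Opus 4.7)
The central object is the bilinear quantity
\[
\Phi(t) := \sum_i \bigl(\theta^i(t)-\tilde\theta^i(t)\bigr)\bigl(u^i(t)-\tilde u^i(t)\bigr),
\]
the finite-state analogue of the Lions--Lasry monotonicity bracket. My plan is to show that along the equilibrium flow $\dot\Phi(t) \leq -\gamma\|(\theta-\tilde\theta)(t)\|^2 - \kappa\|(u-\tilde u)(t)\|_\sharp^2$, then integrate over $[-M,M]$ and bound the boundary contributions $|\Phi(\pm M)|$ using that $\sum_i(\theta^i-\tilde\theta^i)=0$.

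First I expand $\Phi$ as $\sum_i \theta^i u^i - \sum_i \theta^i \tilde u^i - \sum_i \tilde\theta^i u^i + \sum_i \tilde\theta^i \tilde u^i$ and differentiate each sum using \eqref{PVIT}. The convention \eqref{somazero} lets me rewrite $\sum_i \alpha_i^*(\Delta_j z,\theta,j)\,w^i = \alpha^*(\Delta_j z,\theta,j)\cdot\Delta_j w$, which combined with the Legendre identity $\alpha^*(\Delta_j u,\theta,j)\cdot\Delta_j u = h(\Delta_j u,\theta,j) - c(j,\theta,\alpha^*(\Delta_j u,\theta,j))$ collapses the two diagonal derivatives to $-\sum_j\theta^j c(j,\theta,\alpha^*(\Delta_j u,\theta,j))$ and its tilde analogue. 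For the mixed term $\frac{d}{dt}\sum_i\theta^i\tilde u^i$ I use the suboptimality inequality $h(\Delta_j\tilde u,\theta,j)\leq c(j,\theta,\alpha^*(\Delta_j u,\theta,j))+\alpha^*(\Delta_j u,\theta,j)\cdot\Delta_j\tilde u$, upgraded by the uniform concavity inequality \eqref{concavity} of Assumption \ref{a1} to produce an extra quadratic gain $\gamma_j\|\Delta_j(u-\tilde u)\|^2$; a symmetric estimate handles $\sum_i\tilde\theta^i u^i$.

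Summing the four derivatives makes the cost terms cancel and yields
\[
\dot\Phi(t) \leq -\bigl\{\theta\cdot(h(\tilde u,\theta)-h(\tilde u,\tilde\theta))+\tilde\theta\cdot(h(u,\tilde\theta)-h(u,\theta))\bigr\} - \sum_j\gamma_j(\theta^j+\tilde\theta^j)\|\Delta_j(u-\tilde u)\|^2.
\]
The bracket is exactly the expression bounded by Assumption \ref{a2} (applied with $z=\tilde u$, $\tilde z=u$), contributing $-\gamma\|\theta-\tilde\theta\|^2$, while the weighted sum dominates $\kappa\|u-\tilde u\|_\sharp^2$ through the elementary comparison $\sum_j\|\Delta_j v\|^2 \geq 4\|v\|_\sharp^2$, with the a priori bound $\|u\|_\sharp,\|\tilde u\|_\sharp\leq C$ keeping each $\gamma_j$ uniformly positive on the relevant sublevel set of $h$.

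Integrating the differential inequality over $[-M,M]$ gives $\int_{-M}^{M}(\gamma\|\theta-\tilde\theta\|^2+\kappa\|u-\tilde u\|_\sharp^2)\,ds \leq \Phi(-M)-\Phi(M)$. Because $\sum_i(\theta^i-\tilde\theta^i)=0$, the bilinear form $\Phi$ is invariant under shifting $u-\tilde u$ by multiples of $\mathbf{1}$, so Cauchy--Schwarz and Young's inequality give $|\Phi(\pm M)|\leq \tfrac{1}{2}(\|(\theta-\tilde\theta)(\pm M)\|^2+\|(u-\tilde u)(\pm M)\|_\sharp^2)$, which proves the claim. The delicate part is the bookkeeping that leads to the displayed inequality: one must track carefully which instance of $\alpha^*$ and which instance of $h$ is evaluated at which pair of arguments in order to obtain exact cancellation of the cost terms and to isolate precisely the bracket on which Assumption \ref{a2} acts; a secondary technical point is converting the weighted sum $\sum_j\gamma_j(\theta^j+\tilde\theta^j)\|\Delta_j v\|^2$ into a clean multiple of $\|v\|_\sharp^2$, which is where the a priori bound on $\|u\|_\sharp,\|\tilde u\|_\sharp$ is essential.
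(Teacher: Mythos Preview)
Your proposal is correct and follows essentially the same route as the paper: differentiate the Lions--Lasry bracket $(\theta-\tilde\theta)\cdot(u-\tilde u)$, use the uniform concavity Assumption~\ref{a1} and the monotonicity Assumption~\ref{a2} to obtain the differential inequality \eqref{derivadagrande}, integrate over $[-M,M]$, and exploit the invariance $(\theta-\tilde\theta)\cdot\mathbf{1}=0$ to bound the boundary terms by the $\|\cdot\|_\sharp$-norms. The only organizational difference is that you expand $\Phi$ into four sums and route through the Legendre identity $\alpha^*\cdot\Delta_j u = h - c$ so that the cost terms cancel explicitly, whereas the paper manipulates the $h$-terms directly by adding and subtracting $h(\Delta_i\tilde u,\theta,i)$ and $h(\Delta_i u,\tilde\theta,i)$; the resulting inequality is the same. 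One small remark: to pass from $\sum_j\gamma_j(\theta^j+\tilde\theta^j)\|\Delta_j v\|^2$ to a multiple of $\|v\|_\sharp^2$ the paper simply uses that $\|\Delta_j v\|\geq\|v\|_\sharp$ for \emph{every} $j$ together with $\sum_j(\theta^j+\tilde\theta^j)=2$, which is slightly cleaner than the unweighted inequality $\sum_j\|\Delta_j v\|^2\geq 4\|v\|_\sharp^2$ you quote (the weights $\theta^j+\tilde\theta^j$ need not be bounded below).
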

\begin{proof}
Observe that
\begin{align*}
&\frac{d}{dt}\bigg[(\theta-\tilde \theta)\cdot (u-\tilde u)\bigg]= \sum_{i=1}^d\bigg[(\dot\theta^i-\dot{\tilde \theta}^i) (u^i-\tilde u^i)+(\theta^i-\tilde \theta^i)(\dot u^i-\dot{\tilde u}^i)\bigg]
\\
&=\sum_{i=1}^d\bigg[(u^i-\tilde u^i)\bigg(\sum_j \theta^j \alpha_{i}^*(\Delta_ju,\theta,j)-\sum_j \tilde\theta^j \alpha_{i}^*(\Delta_j\tilde u,\tilde\theta,j)  \bigg)+ (\theta^i-\tilde \theta^i)(h(\Delta_i\tilde u,\tilde\theta,i)-h(\Delta_iu,\theta,i ))\bigg].
\end{align*}
In order to use the hypothesis \eqref{concavity} and \eqref{monot} we sum and subtract some terms and we change the names of the variables in the double sums.
\begin{align*}
\frac{d}{dt}\bigg[(\theta-\tilde \theta)\cdot (u-\tilde u)\bigg]&= \sum_{i=1}^d \theta^i [h(\Delta_i \tilde u,\tilde\theta,i)-h(\Delta_i \tilde u,\theta,i )]+\tilde \theta^i[h(\Delta_i u,\theta,i )-h(\Delta_i u,\tilde\theta,i)   ]
\\&+\sum_{i=1}^d \theta^i[h(\Delta_i\tilde u,\theta,i)-h(\Delta_iu,\theta,i )]+\sum_{j=1}^d\sum_{i=1}^d \theta^i \alpha_{j}^*(\Delta_iu,\theta,i)(u^j-\tilde u^j)
\\
&+\sum_{i=1}^d\tilde \theta^i[h(\Delta_iu,\tilde \theta,i )- h(\Delta_i\tilde u,\tilde \theta,i)]+\sum_{i=j}^d\sum_{i=1}^d \tilde\theta^i \alpha_{j}^*(\Delta_i\tilde u,\tilde\theta,i)(\tilde u^j- u^j).
\end{align*}
Now using that  $\displaystyle(\tilde u^i- u^i) \sum_j \alpha_{j}^*(\Delta_iu,\theta,i)=0$ and remembering that $\displaystyle \sum_{j=1}^d  \alpha_{j}^*(\Delta_iu,\theta,i)(u^j-\tilde u^j)=\alpha^*(\Delta_iu,\theta,i)\cdot(\tilde u- u)$,
  we have
\begin{align*}
\frac{d}{dt}\bigg[(\theta-\tilde \theta)\cdot (u-\tilde u)\bigg]&=\sum_{i=1}^d \theta^i [h(\Delta_i u,\tilde\theta,i)-h(\Delta_iu,\theta,i )]+\tilde \theta^i[h(\Delta_i\tilde u,\theta,i )-h(\Delta_i\tilde u,\tilde\theta,i)   ]
\\
&+\sum_{i=1}^d \theta^i\bigg[h(\Delta_i\tilde u,\theta,i)-h(\Delta_iu,\theta,i )-  \alpha^*(\Delta_iu,\theta,i)\cdot(\Delta_i \tilde u-\Delta_i u)\bigg]
\\
&+\sum_{i=1}^d\tilde \theta^i\bigg[h(\Delta_iu,\tilde \theta,i )- h(\Delta_i\tilde u,\tilde \theta,i)-  \alpha^*(\Delta_i\tilde u,\tilde\theta,i)\cdot(\Delta_i u- \Delta_i \tilde u)\bigg].
\end{align*}
Now we can use   \eqref{concavity} and \eqref{monot} to get the following estimate
\begin{equation}\label{derivadagrande}
     \frac{d}{dt}\bigg[(\theta-\tilde \theta)\cdot (u-\tilde u)\bigg]\leq  -\gamma \|\theta -\tilde \theta  \|^2-\sum_{i=1}^d (\theta^i+ \tilde \theta^i) \gamma_i\|\Delta_i u-\Delta_i\tilde u\|^2.
\end{equation}
Integrating \eqref{derivadagrande} between $-M$ and $M$, for $0<M< T$, we obtain
\begin{equation}\nonumber
    ((\theta-\tilde \theta)\cdot (u-\tilde u))(M)-((\theta-\tilde \theta)\cdot (u-\tilde u))(-M)\leq \int_{-M}^M -\gamma \|\theta -\tilde \theta  \|^2-\sum_{i=1}^d (\theta^i+ \tilde \theta^i) \gamma_i\|\Delta_i u-\Delta_i\tilde u\|^2.
\end{equation}
Note that  $(\theta-\tilde \theta)\cdot c\mathbf{1}  =0 $. Also for each $t$ there exists   $c_t\in \re$ such that $\|(u-\tilde u)(t)+c_t\mathbf{1}\|=\|(u-\tilde u)(t)\|_{\sharp}$.  Hence
\begin{align*}&\int_{-M}^M \gamma \|\theta -\tilde \theta  \|^2+\sum_{i=1}^d (\theta^i+ \tilde \theta^i) \gamma_i\|\Delta_i u-\Delta_i\tilde u\|^2 \\
&\quad\leq ((\theta-\tilde \theta)\cdot (u-\tilde u+c_{-M}\mathbf{1}))(-M)+((\theta-\tilde \theta)\cdot (\tilde u- u+c_M\mathbf{1}))(M)\\
&\quad\leq \frac 1 2 \|(\theta-\tilde \theta)(M)\|^2+\frac 1 2 \|(u-\tilde u)(M)\|_{\sharp}^2+\frac 1 2 \|(\theta-\tilde \theta)(-M)\|^2+\frac 1 2 \|(u-\tilde u)(-M)\|_{\sharp}^2.\end{align*}
Using that $\displaystyle\|\Delta_i u-\Delta_i\tilde u\|=\|u-\tilde u -(u^i-\tilde u^i)\mathbf{1}  \|\geq   \inf_{\lambda}\|u-\tilde u +\lambda\mathbf{1}\|=\|u-\tilde u\|_{\sharp}$, we have
\begin{align*}
&\int_{-M}^M \gamma \|(\theta -\tilde \theta ) (s)\|^2+ \bar \gamma\| (u-\tilde u)(s)\|_{\sharp}^2 ds \leq \int_{-M}^M \gamma \|\theta -\tilde \theta  \|^2+\sum_{i=1}^d (\theta^i+ \tilde \theta^i) \gamma_i\|\Delta_i u-\Delta_i\tilde u\|^2\\
&\leq
\frac 1 2 \left(\|(\theta-\tilde \theta)(M)\|^2+\|(u-\tilde u)(M)\|_{\sharp}^2+\|(\theta-\tilde \theta)(-M)\|^2+\|(u-\tilde u)(-M)\|_{\sharp}^2\right).
\end{align*}
Therefore we have proved
\begin{align}\label{desig}
&\int_{-M}^M  \|(\theta -\tilde \theta ) (s)\|^2+ \| (u-\tilde u)(s)\|_{\sharp}^2 ds\\&\notag \quad\leq\frac{1}{2\tilde\gamma}\bigg(\|(\theta-\tilde \theta)(M)\|^2+\|(u-\tilde u)(M)\|_{\sharp}^2+\|(\theta-\tilde \theta)(-M)\|^2+\|(u-\tilde u)(-M)\|_{\sharp}^2\bigg).
\end{align}
\end{proof}
\begin{lemma}\label{subexp}
Fix $T>0$. Suppose that $(\theta,u)$ and $(\tilde \theta,\tilde u)$ are solutions of \eqref{PVIT} with initial-terminal conditions $\theta(-T)=\theta_0, u^i(T)=\psi^i(\theta(T))$  and  $ \tilde \theta(-T)=\tilde \theta_0, \tilde u^i(T)=\tilde \psi^i(\tilde \theta(T))$. Then
\begin{equation}\label{estim} \int_{-T}^{T}\|(\theta -\tilde \theta ) (s)\|^2+ \| (u-\tilde u)(s)\|_{\sharp}^2ds\leq KT^3+4T.\end{equation}
\end{lemma}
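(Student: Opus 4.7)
The plan is to prove this by elementary a priori estimates, bounding each of the two pieces of the integrand separately, rather than invoking the monotonicity-based estimate of the previous lemma. That earlier lemma is of limited use here: the two solutions come with possibly different terminal costs $\psi,\tilde\psi$ and unrelated initial data $\theta_0,\tilde\theta_0$, so the duality pairing between $(\theta-\tilde\theta)$ and $(u-\tilde u)$ cannot be closed by boundary terms alone. Instead, the lemma is a crude bound that relies purely on the sup-norm estimate of Proposition \ref{boundedness} and on the simplex constraint.

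For the $\theta$-integral, the key observation is that $\theta(s),\tilde\theta(s)\in\Sss^d$ for every $s$. This forces $\|\theta(s)-\tilde\theta(s)\|_1\le 2$ and each coordinate of $\theta(s)-\tilde\theta(s)$ to lie in $[-1,1]$, so
\[
\|\theta(s)-\tilde\theta(s)\|^{2}\le\|\theta(s)-\tilde\theta(s)\|_{\infty}\,\|\theta(s)-\tilde\theta(s)\|_{1}\le 2.
\]
Integrating this pointwise bound over $[-T,T]$ yields $\int_{-T}^{T}\|\theta-\tilde\theta\|^{2}\,ds\le 4T$, which produces the $4T$ contribution in the final estimate.

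For the $u$-integral, I would apply Proposition \ref{boundedness} to each of $u$ and $\tilde u$. Its proof extends to $[-T,T]$ without modification, since it only uses the one-sided comparison with the affine perturbation $u+\rho(T-t)$ on a compact time interval and is insensitive to the sign of $t$. Since $u^{i}(T)=\psi^{i}(\theta(T))$ and $\tilde u^{i}(T)=\tilde\psi^{i}(\tilde\theta(T))$ are bounded by a data-dependent constant $C_{0}$, the proposition gives $\|u(t)\|,\|\tilde u(t)\|\le C_{0}+2M(T-t)$. Bounding $\|\cdot\|_{\sharp}\le\|\cdot\|$ and using $T-t\le 2T$ on $[-T,T]$, we obtain a pointwise estimate
\[
\|u(t)-\tilde u(t)\|_{\sharp}^{2}\le\bigl(2C_{0}+8MT\bigr)^{2},
\]
and integration over $[-T,T]$ yields a bound of order $T^{3}$, which accounts for the $KT^{3}$ term (lower-order contributions in $T$ are absorbed into the constant $K$).

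There is no real obstacle: the statement is a rough a priori bound designed to be fed into more refined arguments later. The only mild care needed is in extending Proposition \ref{boundedness} to the interval $[-T,T]$ (immediate from its proof) and in discarding information by replacing the seminorm $\|\cdot\|_{\sharp}$ with the sup-norm $\|\cdot\|$ at the appropriate step.
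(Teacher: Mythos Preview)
Your approach is correct and achieves the same polynomial-in-$T$ bound as the paper, but by a genuinely different route. The paper does \emph{not} invoke Proposition~\ref{boundedness}; instead it uses the optimal-control representation of $u$ and $\tilde u$ as value functions. Taking the optimal control $\bar\alpha$ for $u$ and using it as a (suboptimal) competitor for $\tilde u$, one obtains
\[
\tilde u^{i}(s)-u^{i}(s)\le \mathbb{E}^{\bar\alpha}_{\bi_s=i}\Big[\int_s^T\big(c(\bi_t,\tilde\theta_t,\bar\alpha_t)-c(\bi_t,\theta_t,\bar\alpha_t)\big)\,dt+\big(\tilde\psi^{\bi_T}(\tilde\theta_T)-\psi^{\bi_T}(\theta_T)\big)\Big],
\]
and then the uniform Lipschitz continuity of $c$ in $\theta$, together with $\|\theta-\tilde\theta\|\le 2$, gives $\|u(s)-\tilde u(s)\|\le 2TK_1+K_0$ directly. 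Your argument instead bounds $\|u\|$ and $\|\tilde u\|$ separately by the maximum principle and then uses the triangle inequality. Your route is more self-contained (it stays at the ODE level and needs only that $h(0,\theta,i)$ is bounded), while the paper's route is a touch sharper since it compares $u$ and $\tilde u$ directly and exploits the standing hypothesis that $c$ is Lipschitz in $\theta$ uniformly in $\alpha$.

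One small point of honesty, which applies equally to the paper and to you: neither argument actually lands on the literal form $KT^{3}+4T$. The paper ends with $KT^{3}+(4+2K)T$, and your bound $2T(2C_{0}+8MT)^{2}$ likewise contributes terms of order $T$ and $T^{2}$ that cannot be absorbed into $KT^{3}$ for $T$ small. Your phrase ``lower-order contributions in $T$ are absorbed into the constant $K$'' glosses over this. Since the only use of the lemma is the sub-exponential growth $F_T(T)/e^{(T-1)\tilde\gamma}\to 0$, a bound of the shape $K(T^{3}+T)$ is entirely sufficient; just state it that way rather than claiming to match the $4T$.
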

\begin{proof}
Note that $\|(\theta -\tilde \theta ) (s)\|\leq 2$.  Let $K_0= \|\psi - \tilde \psi\|_{C_0}$. For each $-T<s<T$, by the definition of $u$ and $\tilde u$ we have
$$u^i(s)=\min_{\alpha} \mathbb{E}_{\bi_s=i}^{\alpha} \left[\int_s^T c(\bi_t,\theta_t,\alpha_t) dt + \psi^{\bi_T}(\theta_T)\right]=
 \mathbb{E}_{\bi_s=i}^{\bar\alpha} \left[\int_s^T c(\bi_t,\theta_t,\bar\alpha_t) dt + \psi^{\bi_T}(\theta_T)\right]$$
and
$$\tilde u^i(s)\leq \mathbb{E}_{\bi_s=i}^{\bar\alpha} \left[\int_s^T c(\bi_t,\tilde \theta_t,\bar\alpha_t) dt + \tilde\psi^{\bi_T}(\tilde \theta_T)\right].$$
Hence
$$\tilde u^i(s)-u^i(s)\leq \mathbb{E}_{\bi_s=i}^{\bar\alpha} \left[\int_s^T \Big(c(\bi_t,\tilde \theta_t,\bar\alpha_t)-c(\bi_t,\theta_t,\bar\alpha_t)\Big) dt + (\tilde\psi^{\bi_T}(\tilde \theta_T)-\psi^{\bi_T}(\theta_T))\right].$$
By the Lipschitz continuity of $c$ and $\psi$ in $\theta$
(remember that the Lipschitz continuity of $c$ is uniform in $\alpha$), we have
$$
\tilde u^i(s)-u^{i}(s)\leq 2TK_1 +K_0.
$$
Changing the roles of $u$ and $\tilde u$ we get
$$\|\tilde u(s)-u(s)\|_{\sharp}^2\leq KT^2+K.$$
Thus
$$ \int_{-T}^{T}\|(\theta -\tilde \theta ) (s)\|^2+ \| (u-\tilde u)(s)\|_{\sharp}^2ds\leq K T^3+(4+2K)T.$$
\end{proof}

\subsection{Uniqueness of equilibria for the initial-terminal value problem}\label{uniq_MFG}

The first consequence of the monotonicity hypothesis is the uniqueness of equilibrium solutions for the initial-terminal
value problem, which is a simple application of Lions-Lasry monotonicity method.

\begin{theorem}\label{ex_Nahs_MFG}
Suppose the monotonicity assumptions \ref{a0}, \ref{a1} and \ref{a2} hold. Then
the system (\ref{PVIT}) and (\ref{PVITDATA}) has a unique solution $(\theta,u)$.
\end{theorem}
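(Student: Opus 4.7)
The plan is to apply the Lions--Lasry monotonicity method, using the identity already computed in the previous key lemma but carried out on the interval $[0,T]$ that matches \eqref{PVITDATA}. Let $(\theta, u)$ and $(\tilde\theta, \tilde u)$ be two solutions of \eqref{PVIT}--\eqref{PVITDATA}; in particular $\theta(0) = \tilde\theta(0) = \theta_0$, and $u^i(T) = \psi^i(\theta(T))$, $\tilde u^i(T) = \psi^i(\tilde\theta(T))$. Repeating verbatim the chain of algebraic manipulations from the proof of the key lemma (the step invoking Assumptions \ref{a1} and \ref{a2} via \eqref{concavity} and \eqref{monot}), I would obtain the pointwise bound
\begin{equation*}
\frac{d}{dt}\bigl[(\theta-\tilde\theta)\cdot(u-\tilde u)\bigr] \leq -\gamma\|\theta-\tilde\theta\|^2 - \sum_{i=1}^d (\theta^i+\tilde\theta^i)\,\gamma_i\,\|\Delta_i u - \Delta_i \tilde u\|^2.
\end{equation*}

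The next step is to integrate this from $0$ to $T$ and read off the boundary terms. At $t=0$ the contribution vanishes since $\theta(0)=\tilde\theta(0)$. At $t=T$ the contribution equals
\begin{equation*}
\sum_{i=1}^d\bigl(\theta^i(T)-\tilde\theta^i(T)\bigr)\bigl(\psi^i(\theta(T))-\psi^i(\tilde\theta(T))\bigr),
\end{equation*}
which is non-negative by the monotonicity Assumption \ref{a0} on $\psi$. Combining these two boundary evaluations with the integrated inequality gives
\begin{equation*}
0 \leq -\int_0^T \left[\gamma\|\theta-\tilde\theta\|^2 + \sum_{i=1}^d(\theta^i+\tilde\theta^i)\,\gamma_i\,\|\Delta_i u-\Delta_i\tilde u\|^2\right] dt.
\end{equation*}
Since the integrand is pointwise non-negative, both terms must vanish identically on $[0,T]$; in particular $\theta \equiv \tilde\theta$.

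With $\theta\equiv\tilde\theta$ in hand, the functions $u$ and $\tilde u$ satisfy the same Hamilton--Jacobi terminal value problem \eqref{HJ} (same driving $\theta(\cdot)$, same terminal datum $\psi^i(\theta(T))$), so the uniqueness statement for \eqref{HJ} proved earlier forces $u \equiv \tilde u$, concluding the argument.

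The main thing to watch for is the sign bookkeeping: the monotonicity of $\psi$ is what supplies the \emph{correct} sign at the terminal boundary (non-negative), so that when combined with the non-positive right-hand side of the integrated differential inequality one really does trap the non-negative integrand between zero and itself. Beyond that, the proof is a direct application of the key estimate and requires no additional regularity or compactness arguments, since existence was already established by the fixed-point argument.
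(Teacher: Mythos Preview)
Your proof is correct and follows essentially the same route as the paper: both use the Lions--Lasry monotonicity identity (the paper's inequality \eqref{derivadagrande}), integrate it on $[0,T]$, kill the $t=0$ boundary term via $\theta(0)=\tilde\theta(0)$, control the $t=T$ boundary term with Assumption~\ref{a0}, and then conclude $\theta\equiv\tilde\theta$ from the resulting sign contradiction before finishing with uniqueness for the Hamilton--Jacobi terminal value problem.
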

\begin{proof}
Suppose $(\theta,u)$ and $(\tilde \theta, \tilde u)$ are solutions of (\ref{PVIT}) and (\ref{PVITDATA}).
At the initial point $t=0$ we have that $(\theta -\tilde \theta)\cdot(u-\tilde u )=0$, because $\theta_0=\tilde \theta_0.$

Integrating \eqref{derivadagrande} between $0$ and $T$, and using the terminal conditions, we have that
$$  (\theta(T)-\tilde \theta(T))\cdot (\psi(\theta(T))-\psi(\tilde\theta(T)))\leq  \int_0^T -\gamma \|\theta -\tilde \theta  \|^2-\sum_{i=1}^d (\theta^i+ \tilde \theta^i) \gamma_i\|\Delta_i u-\Delta_i\tilde u\|^2,$$ now, by assumption \ref{a0} we get
$$  0\leq  \int_0^T -\gamma \|\theta -\tilde \theta  \|^2-\sum_{i=1}^d (\theta^i+ \tilde \theta^i) \gamma_i\|\Delta_i u-\Delta_i\tilde u\|^2,$$
which implies that $\theta(s)=\tilde\theta(s)$ for all $s\in[0,T]$. Therefore,
we have the uniqueness for $\theta$. Then, once $\theta$
is known to be  unique, we obtain by a standard ODE argument that $u=\tilde u$.
\end{proof}

\subsection{Contractive mean field games}

We now introduce a condition that allow us to establish  existence of stationary solutions as well as a-priori bounds for the initial-terminal value problem.

\begin{definition}\label{defcontractive}
Let $\displaystyle\langle u\rangle= \frac 1 d \sum_j u^j\,.$
We say that $h:\mathbb{R}^d \times \Sss^d \times I_d \rightarrow \mathbb{R}$ is contractive if
there exists $M>0$ such that, $\forall \theta$, $\forall i$, if
  $\| u\|_{\sharp}>M$, then
\begin{equation}\label{contractivemax} \left( \Delta_i u \right)^j \leq 0 \;\forall \; j \mbox{ implies }
    h(\Delta_i u,\theta,i)-\langle h(u,\theta,\cdot) \rangle < 0\,,
\end{equation}
and
\begin{equation}\label{contractivemin}
\left( \Delta_i u \right)^j \geq 0 \;\forall \; j \mbox{ implies }
    h(\Delta_i u,\theta,i)-\langle h(u,\theta,\cdot) \rangle > 0\,.
\end{equation}
\end{definition}

Conditions \eqref{contractivemax} and \eqref{contractivemin} are natural if one observes that
$$ \left( \Delta_{i_1} u \right )^j \leq 0 \,\, \forall j\;\mbox{ and } \;  \left( \Delta_{i_2} u \right )^j \geq 0 \,\, \forall j$$
implies
$$ 2 \| u \|_{\sharp} = u^{i_1} - u^{i_2}\,.$$
So, if $u$ is a smooth solution to \eqref{PVIT} and $\| u(t) \|_{\sharp}$ is differentiable with $\| u(t) \|_{\sharp}>M$
then
$$ \frac{d}{dt} \| u \|_{\sharp} >0\,,$$
which implies the flow is backwards contractive with respect to the $\| \cdot \|_{\sharp}$ norm of the $u$ component.

The contractivity condition can be verified explicitly in many examples as we will illustrate in what follows.
Consider the particular case
\begin{equation}\label{cexample}
    c(i,\theta,\alpha)=\sum_j \frac{\alpha_j^2} 2 +f^i(\theta)\,,
\end{equation}
where $f^i(\theta)$ is continuous on $\theta \in \Sss^d$.
We have in this case that
\begin{equation}\label{hexample}
    h(\Delta_i u,\theta,i)= f^i(\theta)- \frac 1 2 \sum_j [(u^i-u^j)^+]^2\,.
\end{equation}
We will show now that $h$ is contractive.
Suppose   first $\left( \Delta_i u \right)^j \leq 0$ $\forall j$.
As all other cases are similar we assume $i=1$ and
\begin{equation}\label{dexample}
   u^1 \geq u^2 \geq ... \geq u^d\,.
\end{equation}
Therefore
\begin{align*}
    h(\Delta_1 u,\theta,1)-\langle h(u,\theta,\cdot)\rangle &=\frac{d-1}{d} \left[ -\frac 1 2 \sum_{j>1} (u^1-u^j)^2\right] \\
& + \frac 1 {2d}  \Big( \sum_{j>2} (u^2-u^j)^2+\sum_{j>3} (u^3-u^j)^2+...+(u^{d-1}-u^d)^2\Big)+F_1(\theta)
\end{align*}
where $F_1(\theta)$ is a bounded function of $\theta$, namely
\begin{equation}\label{Mexamp}
    \|F_1(\theta)\| = \left\|\frac{d-1}{d} f^1(\theta) - \frac{1}{d} \sum_{j>1} f^j(\theta)\right\|
    \leq 2 \max_{\theta, i }  f^i(\theta) \,.
\end{equation}
Now we multiply by $-2d$ to have
\begin{align*}
  -2d \left(  h(\Delta_1 u,\theta,1)-\langle h(u,\theta,\cdot)\rangle \right) &=(d-1) \left[ \sum_{j>1} (u^1-u^j)^2\right]\\
& -   \Big( \sum_{j>2} (u^2-u^j)^2+\sum_{j>3} (u^3-u^j)^2+...+(u^{d-1}-u^d)^2\Big)-2d F_1(\theta)\,.\end{align*}
Reordering we have
\begin{align*}
 & -2d \left(  h(\Delta_1 u,\theta,1)-\langle h(u,\theta,\cdot)\rangle \right) \\
 &=\sum_{j>1} (u^1-u^j)^2 +   \Big( \sum_{j>1} (u^1-u^j)^2 - \sum_{j>2} (u^2-u^j)^2 \Big)\\
  &+\Big(\sum_{j>1} (u^1-u^j)^2- \sum_{j>3} (u^3-u^j)^2\Big) +...+\Big( \sum_{j>1} (u^1-u^j)^2-(u^{d-1}-u^d)^2\Big)- 2dF_1(\theta)\,.\end{align*}
Now using \eqref{dexample} we have an inequality
\begin{align*}& -2d \left(  h(\Delta_1 u,\theta,1)-\langle h(u,\theta,\cdot)\rangle \right) \\
 &\geq\sum_{j>1} (u^1-u^j)^2 +   \Big( \sum_{j>1} (u^1-u^j)^2 - \sum_{j>2} (u^1-u^j)^2 \Big)\\
  &+\Big(\sum_{j>1} (u^1-u^j)^2- \sum_{j>3} (u^1-u^j)^2\Big) +...+\Big( \sum_{j>1} (u^1-u^j)^2-(u^{1}-u^d)^2\Big) -2d F_1(\theta) \,,\end{align*}
which implies
\begin{align*} & -2d \left(  h(\Delta_1 u,\theta,1)-\langle h(u,\theta,\cdot)\rangle \right) \\
 &\geq\sum_{j>1} (u^1-u^j)^2 +   \Big( (u^1-u^2)^2 \Big)\\
  &+\Big( (u^1-u^2)^2+  (u^1-u^3)^2\Big) +...+\Big( \sum_{j=1}^{d-1} (u^1-u^j)^2\Big) -2d  F_1(\theta) \,,\end{align*}
and the last inequality implies that $  h(\Delta_1 u,\theta,1)-\langle h(u,\theta,\cdot)\rangle  < 0$ whenever
$\| u\|_{\sharp}$ is large enough.

For the case $\left( \Delta_i u \right)^j \geq 0$ $\forall j$ it suffices, as before, to assume $i=1$ and
$$ u^1\leq u^2\leq ...\leq u^d.$$
Then
\begin{align*}& h(\Delta_1 u,\theta,1)-\langle h(u,\theta,\cdot)\rangle   \\
&= \frac 1 {2d}  \Big( \sum_{j<2} (u^2-u^j)^2+\sum_{j<3} (u^3-u^j)^2+...+\sum_{j<d}(u^{d}-u^j)^2\Big)+F_1(\theta).\end{align*}
This implies
$$ h(\Delta_1 u,\theta,1)-\langle h(u,\theta,\cdot)\rangle>0\,,  $$ whenever $\| u\|_{\sharp}$ is large enough.


\subsection{Stationary solutions}

We now discuss stationary solutions to  \eqref{PVIT}. It is clear, if, for instance $h>0$ the equation \eqref{PVIT}
cannot admit stationary solutions in the sense that $\frac{d}{dt}u=\frac{d}{dt} \theta=0$.
Therefore we need to consider
 stationary solutions to \eqref{PVIT} modulo addition of a constant:
\begin{definition}
A triplet $(\bar \theta,\bar u, \kappa) \in \mathcal{S}^d\times \re^d\times \re$ is called a stationary solution of \eqref{PVIT}
if
 \begin{eqnarray}
\begin{cases}
   \sum_j \bar\theta^j \alpha_{i}^*(\Delta_j\bar u,\bar\theta,j)=0\,,\\
   h(\Delta_i\bar u,\bar\theta,i)=\kappa\,.
\end{cases}\label{equilvectors}
\end{eqnarray}
\end{definition}
If $(\bar \theta,\bar u, \kappa)$ is a stationary solution for the MFG equations, then $(\bar \theta,\bar u-\kappa t)$ solves \eqref{PVIT}.

\begin{proposition}\label{te:stat_existence}
Suppose $h:\mathbb{R}^d \times \Sss^d \times I_d \rightarrow \mathbb{R}$ given by \eqref{LegendreTransform} is contractive.
\begin{itemize}
\item[(a)] For $M$ large enough, the set $\left\{ u \in \re^d , \|u\|_{\sharp} < M \right\} \times \Sss^d$ is invariant backwards in time by the flow of equation \eqref{PVIT}.
\item[(b)] There exist a stationary solution of \eqref{PVIT}.
\end{itemize}
\end{proposition}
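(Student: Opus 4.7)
I plan to argue by contradiction via a first backward exit time from $\{\|u\|_\sharp < M\}$. Take $M$ strictly larger than the contractivity threshold of Definition \ref{defcontractive}. Suppose some trajectory $(\theta, u)$ of \eqref{PVIT} has $\|u(t_0)\|_\sharp < M$ but leaves the ball going backwards; let $t_1 < t_0$ be the largest time with $\|u(t_1)\|_\sharp = M$, so that $\|u(t)\|_\sharp < M$ on $(t_1, t_0]$. Choose $i_1 \in \argmax_j u^j(t_1)$ and $i_2 \in \argmin_j u^j(t_1)$; then $\Delta_{i_1} u(t_1) \leq 0$ and $\Delta_{i_2} u(t_1) \geq 0$ componentwise, and $u^{i_1}(t_1) - u^{i_2}(t_1) = 2M$. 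Applying \eqref{contractivemax} to $i_1$ and \eqref{contractivemin} to $i_2$, and using the HJ ODE $-\dot u^i = h(\Delta_i u, \theta, i)$, I obtain
\[
\frac{d}{dt}\bigl(u^{i_1} - u^{i_2}\bigr)(t_1) \;=\; h(\Delta_{i_2} u, \theta, i_2)(t_1) - h(\Delta_{i_1} u, \theta, i_1)(t_1) \;>\; 0.
\]
Hence $u^{i_1}(t) - u^{i_2}(t) > 2M$ for $t$ slightly greater than $t_1$, which forces $2\|u(t)\|_\sharp \geq u^{i_1}(t) - u^{i_2}(t) > 2M$, contradicting $\|u(t)\|_\sharp < M$ on $(t_1, t_0]$.

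\textbf{Part (b).} I would combine (a) with a Brouwer-type fixed point argument and then pass to a short-time limit. Let $K = \Sss^d \times \{[u] \in \re^d/\re\mathbf{1} : \|u\|_\sharp \leq M\}$, which is convex and compact. Because the vector field of \eqref{PVIT} depends on $u$ only through the differences $\Delta_i u$, the $u$-equation descends to the quotient. For each $T > 0$ define $\Phi_T : K \to K$ by
\[
\Phi_T(\theta_0, [\bar u]) \;=\; (\theta(T),\, [u(0)]),
\]
where $(\theta, u)$ solves the ITVP on $[0, T]$ with $\theta(0) = \theta_0$ and terminal datum $u(T) = \bar u$ (existence from \S\ref{existmfg} with constant terminal cost $\psi \equiv \bar u$). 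Part (a) guarantees $\|u(0)\|_\sharp \leq M$, so $\Phi_T(K) \subseteq K$, and continuous dependence on data makes $\Phi_T$ continuous (or upper-semicontinuous with convex values if ITVP uniqueness fails, in which case Kakutani replaces Brouwer). A fixed point $(\theta_0^T, [\bar u^T])$ yields a ``periodic'' solution $(\theta_T, u_T)$ on $[0,T]$ with $\theta_T(0) = \theta_T(T)$ and $u_T(T) - u_T(0) = c_T \mathbf{1}$ for some $c_T \in \re$.

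To extract a stationary solution, let $T = T_n \to 0^+$. Since the vector fields of \eqref{PVIT} are uniformly bounded on $K$, $(\theta_{T_n}(t), u_{T_n}(t))$ differs from its endpoint by $O(T_n)$ and $|c_{T_n}| \leq T_n \sup_K \|h\|$, so along a subsequence $(\theta_0^{T_n}, [\bar u^{T_n}]) \to (\bar\theta, [\bar u]) \in K$ and $c_{T_n}/T_n \to -\kappa$ for some $\kappa \in \re$. Integrating the Kolmogorov part of \eqref{PVIT} over $[0, T_n]$, using $\theta_{T_n}(0) = \theta_{T_n}(T_n)$, dividing by $T_n$ and invoking uniform continuity of the integrand on $K$ together with the $O(T_n)$ variation of the trajectory, yields in the limit $\sum_j \bar\theta^j \alpha_i^*(\Delta_j \bar u, \bar\theta, j) = 0$ for every $i$. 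The same procedure applied componentwise to the Hamilton--Jacobi equation gives $h(\Delta_i \bar u, \bar\theta, i) = \kappa$ for every $i$, so $(\bar\theta, \bar u, \kappa)$ is a stationary solution in the sense of \eqref{equilvectors}.

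\textbf{Main obstacle.} The delicate point in (a) is that the indices $i_1, i_2$ may vary with $t$ and $\|u(t)\|_\sharp$ is only Lipschitz in general; the argument above must therefore be interpreted as a Dini-derivative statement, which still suffices because the strict increase of $u^{i_1} - u^{i_2}$ at the selected indices forces $\|u(t)\|_\sharp$ to strictly exceed $M$ on a right-neighborhood of $t_1$. For (b), the subtle issue is verifying that the short-$T$ periodic solutions really pass to the stationary equations (not merely to their time averages), which follows from the $O(T)$ variation of $(\theta_T, u_T)$ within one period combined with the continuity of $\alpha^*$ and $h$.
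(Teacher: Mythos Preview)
Your Part (a) is correct and is a careful expansion of exactly what the paper means by ``direct consequence of the definition and the observations thereafter.'' The point you flag about Dini derivatives is real but, as you say, handled by choosing $M$ strictly above the contractivity threshold so that the strict inequalities \eqref{contractivemax}--\eqref{contractivemin} are available at the first backward exit time.

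For Part (b), your route is genuinely different from the paper's one-line invocation of ``Brouwer for flows that leave invariant compact convex sets,'' and in fact your approach patches a gap that the paper glosses over. The issue is that neither time direction of the \emph{ODE} flow preserves $K=\Sss^d\times\{\|u\|_\sharp\le M\}$: Part (a) gives backward invariance of the $u$-ball, but the Kolmogorov equation is only \emph{forward} invariant on $\Sss^d$ (at $\theta^i=0$ one has $\dot\theta^i\ge 0$, so the backward flow can leave the simplex). Your ITVP map $\Phi_T(\theta_0,[\bar u])=(\theta(T),[u(0)])$ neatly reconciles the two directions---forward for $\theta$, backward for $u$---and the extraction of a stationary triple as $T_n\to 0$ from the resulting periodic solutions is correct.

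Two small technical points. First, you should restrict to small $T$ from the outset: then the ITVP has a unique solution (a contraction argument, cf.\ the small-time uniqueness implicit in Theorem~\ref{teoconv}), so $\Phi_T$ is single-valued and continuous and plain Brouwer applies; your Kakutani fallback is unnecessary, and in any case convexity of the ITVP solution set is not obvious. Second, there is a harmless sign slip: from $c_{T_n}=u^i(0)-u^i(T_n)=\int_0^{T_n}h\,ds$ you get $c_{T_n}/T_n\to h(\Delta_i\bar u,\bar\theta,i)$, so the limit should be named $\kappa$ rather than $-\kappa$ to match \eqref{equilvectors}.
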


\begin{proof}
The first item is a direct consequence of the definition \ref{defcontractive} and the observations thereafter.
The second item is a consequence of Brower fixed point theorem for flows that leave invariant compact and convex sets.
\end{proof}

\subsection{Uniqueness of stationary solutions and trend to equilibrium}
\label{stat_uniq_conv}

We now discuss two important consequences of the monotonicity and contractivity properties: the
uniqueness of solutions  and the trend to equilibrium.

\begin{theorem}\label{te:stat_uniqueness}
Suppose that the monotonicity assumptions \ref{a1}, \ref{a2}, and contractivity hold.
\begin{itemize}
\item[(a)] Suppose $\|u(T)\|_{\sharp} \leq M$, where $u$ is a solution to \eqref{PVIT}, and $M$ is large enough. Then $\|u(t)\|_{\sharp} \leq M \,\forall\,t  \in [0,T]$.
\item[(b)] The stationary solution $(\bar \theta, \bar u,\kappa)$ is unique (up to the addition of a constant to $\bar u$).
\item[(c)] Given $T>0$, a vector $\theta_0$, and a terminal condition $\psi$, let $(\theta^T,u^T)$ be the solution of
 \eqref{PVIT} with initial-terminal conditions $\theta^T(-T)=\theta_0$
and ${u^{T,i}}(T)=\psi^i(\theta^T(T))$. We have, when $T\to\infty$
$$\theta^T(0)\to \bar\theta,\,\,\,\, \|u^T(0)-\bar u\|_{\sharp}\to 0 ,$$
where   $(\bar\theta,\bar u)$ is the unique stationary solution for the MFG equations.
\end{itemize}
\end{theorem}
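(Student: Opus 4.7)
My plan is to tackle the three parts in order, using contractivity for (a) and feeding this a priori bound into the key estimate from the preceding lemma for both (b) and (c).

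For part (a), I would argue by a maximum-principle contradiction on $\rho(t) := \|u(t)\|_\sharp$. Picking indices $i_1(t), i_2(t)$ that realize the max and min of $u(t)$, so $2\rho(t) = u^{i_1}(t) - u^{i_2}(t)$, the vectors $\Delta_{i_1} u$ and $\Delta_{i_2} u$ have the sign patterns required by Definition \ref{defcontractive}. When $\rho(t) > M$, contractivity gives $h(\Delta_{i_1} u,\theta,i_1) < \langle h\rangle < h(\Delta_{i_2} u,\theta,i_2)$, hence via \eqref{PVIT}
\begin{equation*}
\frac{d}{dt}\bigl(u^{i_1} - u^{i_2}\bigr) = -h(\Delta_{i_1} u,\theta,i_1) + h(\Delta_{i_2} u,\theta,i_2) > 0.
\end{equation*}
A one-sided Dini derivative / envelope argument handles the switching of extremal indices and shows that $\rho$ is non-decreasing on $\{\rho > M\}$. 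Combined with $\rho(T) \leq M$, this forces $\rho(t) \leq M$ throughout $[0,T]$.

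For part (b), let $(\bar\theta, \bar u, \kappa)$ and $(\tilde{\bar\theta}, \tilde{\bar u}, \tilde\kappa)$ be two stationary triplets. Both $(\bar\theta, \bar u - \kappa t\mathbf{1})$ and $(\tilde{\bar\theta}, \tilde{\bar u} - \tilde\kappa t\mathbf{1})$ solve \eqref{PVIT} on any interval $[-T,T]$, with $\sharp$-norms constant in $t$ (since $\|\cdot\|_\sharp$ annihilates multiples of $\mathbf{1}$) and bounded by part (a). Applying the key estimate on $[-M,M]$, the integrand $\|\bar\theta-\tilde{\bar\theta}\|^2 + \|\bar u-\tilde{\bar u}\|_\sharp^2$ is $t$-independent, so the left-hand side grows as $2M$ times this quantity while the right-hand side stays uniformly bounded. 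Sending $M \to \infty$ forces $\bar\theta = \tilde{\bar\theta}$ and $\bar u - \tilde{\bar u} \in \mathbb{R}\mathbf{1}$, and substituting back into $h(\Delta_i \bar u,\bar\theta,i)=\kappa$ then gives $\kappa = \tilde\kappa$.

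For part (c), apply the key estimate to $(\theta^T,u^T)$ and $(\bar\theta, \bar u - \kappa t\mathbf{1})$ on $[-M,M] \subset [-T,T]$. Part (a), used with $M$ taken at least $\sup_\theta\|\psi(\theta)\|_\sharp$, bounds $\|u^T\|_\sharp$ uniformly in $T$, so the boundary terms at $\pm M$ are controlled independently of both $T$ and $M$, yielding
\begin{equation*}
\int_{-M}^M \bigl(\|\theta^T-\bar\theta\|^2 + \|u^T-(\bar u-\kappa t\mathbf{1})\|_\sharp^2\bigr)\,ds \leq C.
\end{equation*}
Normalizing $u^T$ by subtracting its first coordinate makes it uniformly bounded in $\ell^\infty$, and then the right-hand side of \eqref{PVIT} is uniformly bounded, so Arzelà--Ascoli with a diagonal argument extracts a subsequence $T_n \to \infty$ along which $(\theta^{T_n}, u^{T_n})$ converges uniformly on compacta (in the quotient by $\mathbb{R}\mathbf{1}$ for $u$) to a solution $(\theta^\infty, u^\infty)$ of \eqref{PVIT} on all of $\mathbb{R}$. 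Applying the pointwise inequality \eqref{derivadagrande} to this pair, the Lyapunov functional $V(t) := (\theta^\infty(t)-\bar\theta)\cdot(u^\infty(t)-\bar u)$ is non-increasing; the $L^2$ bound combined with Lipschitz regularity of the integrand force $V(\pm\infty)=0$, and integrating \eqref{derivadagrande} over $\mathbb{R}$ then yields $\int_\mathbb{R} (\|\theta^\infty-\bar\theta\|^2 + \|u^\infty-\bar u\|_\sharp^2)\,ds = 0$. Hence $\theta^\infty \equiv \bar\theta$ and $u^\infty(t) - \bar u \in \mathbb{R}\mathbf{1}$ for each $t$, giving $\theta^{T_n}(0) \to \bar\theta$ and $\|u^{T_n}(0)-\bar u\|_\sharp \to 0$; uniqueness of the subsequential limit (via (b)) upgrades this to full-sequence convergence. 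The main technical obstacle is the non-smoothness of $\|\cdot\|_\sharp$ in part (a); after that is settled, (b) and (c) follow cleanly from the key estimate via, respectively, an LHS-vs-RHS growth argument and Arzelà--Ascoli combined with the Lyapunov identity.
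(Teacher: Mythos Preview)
Your argument is correct, but your route for (c) is genuinely different from the paper's. The paper does \emph{not} use Arzel\`a--Ascoli or any compactness extraction; instead it exploits the key estimate as a \emph{reverse Gronwall inequality}. Setting $f_T(s)=\|(\theta^T-\tilde\theta^T)(s)\|^2+\|(u^T-\tilde u^T)(s)\|_\sharp^2$ and $F_T(\tau)=\int_{-\tau}^\tau f_T$, the key estimate reads $F_T(\tau)\le \tilde\gamma^{-1}\dot F_T(\tau)$, which forces $F_T(\tau)\ge e^{\tilde\gamma(\tau-1)}F_T(1)$; since Lemma~\ref{subexp} bounds $F_T(T)$ polynomially in $T$, this yields $F_T(1)\to 0$ exponentially as $T\to\infty$. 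A mean-value argument then pins down a time in $[-1,1]$ where $f_T$ is small, and continuous dependence on data propagates this to $t=0$. Uniqueness (b) is deduced in the same stroke by taking $(\tilde\theta^T,\tilde u^T)$ to be the stationary solution. Your direct argument for (b), letting the window $[-M,M]$ grow against a bounded right-hand side, is cleaner than the paper's joint treatment. For (c), your compactness-plus-Lyapunov approach is more classical and perfectly valid (the normalization by $u^{T,1}$ and passage to the quotient being the only delicate point), but it yields no quantitative rate; the paper's reverse-Gronwall method gives the sharper conclusion that $F_T(1)$ decays like $e^{-\tilde\gamma T}$ times a polynomial, which is worth knowing.
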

\begin{proof}
Item (a) is again a a direct consequence of the definition \ref{defcontractive} and the observations thereafter. 

In order to prove items (b) and (c), fix two probability distributions $\theta_0$ and $\tilde \theta_0$ in $\mathcal{S}^d$, and two terminal conditions $\psi$ and $\tilde \psi$. For each $T>0$, let $(\theta^T,u^T)$ and $(\tilde \theta^T,\tilde u^T)$ be the solutions of \eqref{PVIT} with initial-terminal conditions $\theta^T(-T)= \theta_0, u^{T,i}(T)=\psi^i(\theta^T(T))$  and  $ \tilde \theta^T(-T)=\tilde \theta_0 , \tilde u^{T,i}(T)=\tilde \psi^i(\tilde \theta^T(T))$, respectively. By the contractivity hypothesis, $\|u(t)\|_{\sharp}$ and $\|\tilde u(t)\|_{\sharp}$ are uniformly bounded.

We define
$$ f_T(s):= \|(\theta^T -\tilde \theta^T ) (s)\|^2+ \| (u^T-\tilde u^T)(s)\|_{\sharp}^2,$$
   and, for $0<\tau<T$,
 $$F_T(\tau):= \int_{-\tau}^\tau f_T(s)ds.$$
By \eqref{desig}, we have
 $$F_T(\tau)\leq \frac{1}{\tilde\gamma}(f_T(\tau)+f_T(-\tau)).$$
Note  that $\dot F_T(\tau)=f_T(\tau)+f_T(-\tau)$, hence
$$ F_T(\tau)\leq \frac{1}{\tilde\gamma} \dot F_T(\tau).$$
This implies
 $\frac{d}{dt}\ln F_T(\tau)\geq \tilde\gamma$, therefore
 \[\ln F_T(\tau)-\ln F_T(1)\geq (\tau-1)\tilde \gamma,
 \] for all $0<\tau<T$. From this we get
 $$ \int_{-1}^{1}f_T(s)ds=F_T(1)\leq \frac{F_T(T)}{e^{(T-1)\tilde \gamma}}\to 0 \qquad \mbox{when } T\to \infty,$$
because $F$ has sub-exponential growth, by \eqref{estim} in Lemma \ref{subexp}.

Now there exists $t(T)\in[-1,1]$ with $f_{T}(t(T))\leq \frac{F_T(1)}{2}$. Hence
$$
\|\theta^{T}(t(T)) -\tilde \theta^{T}  (t(T))\| \to 0,\,\,\,  \| u^{T}(t(T))-\tilde u^{T}(t(T))\|_{\sharp} \to 0\,,
$$
as $T\to+\infty$.

Recall that $(\theta^{T}, u^{T})$ and $(\tilde\theta^{T}, \tilde u^{T})$ are solutions of the same time-homogeneous ODE \eqref{PVIT}, with data at time $t_T$  $(\theta^{T}(t_T), u^{T}(t_T))$ and $(\tilde\theta^{T}(t_T), \tilde u^{T}(t_T))$ whose difference goes to zero as as $T\to+\infty$. From the continuous dependence of solutions of ODE´s with respect to initial conditions, and observing that $t_T \in [-1,1]$, we can conclude that
 $$\|\theta^{T}(t) -\tilde \theta^{T}(t)\|\to 0\,\,\,\mbox{and} \,\,\,  \| u^{T}(t)-\tilde u^{T}(t)\|_{\sharp}\to 0\;,\quad \mbox{uniformly in } t \in[-1,1]\,, \,\mbox{as } T \to \infty.$$

Now, from Theorem \ref{te:stat_existence}, we know there exists a stationary solution $(\bar \theta, \bar u)$. If we choose initial and terminal conditions $(\tilde \theta,\tilde \psi)$
in such a way that $(\tilde \theta^T(t), \tilde u^T(t)) = (\bar \theta, \kappa (T-t) + \bar u )$, we have the convergence of $(\theta^T, u^T)$ to $(\bar \theta, \bar u)$, which implies both the trend to equilibrium and the uniqueness of stationary solutions.
\end{proof}

%
%



\section{The $N+1$-player game}
\label{nplayer}


In this section we consider games between $N+1$-players
which are symmetric under permutation of players.
As in the previous section we assume that each of
the players can be in one of $d$ states, and knows, in addition to his or her state,
the number of players in each of the states.

Players follow a Markovian dynamics in which each player controls the switching rate, as discussed in \S\ref{cmdyn}, and \S\ref{ippv}. Using Hamilton-Jacobi ODE methods, \S \ref{thjode}, and a verification Theorem, \S \ref{verifteo}, we formulate the Nash equilibrium problem for the $N+1$-player problem.
Maximum principle type estimates are considered in \S
\ref{hjode} which are then applied to establishing the existence of Nash equilibrium solutions
in \S\ref{eqsols}.

\subsection{Controlled Markov Dynamics}
\label{cmdyn}


Remember that $I_d=\{1,2,3,...,d\}$, and let $S^d_N= \{(n^1,...,n^d) \in \mathbb{Z}^d \| \sum_{i=1}^d n^i = N, n^i\geq 0\}$. Let $e_k$ be the $k-th$ vector of the canonical basis of $\re^d$, and let $e_{jk}=e_j-e_k$.

In the preceding section we considered a game where a very large number of players was allowed to switch between $d$ states.
The fraction of players in each state was approximated by a deterministic vector $\theta(t)$.
In an analogous way,
we consider now a game between $N+1$ players that are allowed to switch between the same $d$ states. As before, to describe the game we will use a reference player. 
However, we no longer make the assumption that the fraction of the players in each state can be approximated
by a deterministic vector $\theta(t)$.
Instead, in addition to the position $\bi_t$ of the reference player,  we consider a second controlled Markov Chain $\bn$,
taking values in $S^d_N$, which records the number of the remaining players (distinct from the reference player) that are in any of the $d$ states at any given time.
Each player knows his own state, as well as the number of remaining players that are in any of the  states. No further information is available to any individual player.

We suppose the reference player
switches from state $i$ to state $j$ according to a switching Markovian rate  $\alpha_{ij}(n, t)$ which he or she would like to optimize upon.
We suppose that each of the players distinct from the reference player follows a controlled Markov process $\bk_t$ with transition rates from state $k$ to
state $j$ given by  $\beta=\beta_{kj}(n,t)$.
More precisely, we have, for $j\neq k$,
$$
\mathbb{P}\Big(\bk_{t+h}=j\|\bn_t=n,\bk_t=k\Big)= \beta_{kj}(n,t).h+o(h)\,,
$$
where $\lim \frac{o(h)}h =0$ when $ h \rar 0$.
We suppose that $\beta:I_d\times I_d \times S^d_N \times[0,+\infty) \rar \re$ is an {\it admissible control},
that  it is bounded and continuous as a function of time, and $\beta_{kk}(n,t)= - \sum_{j \neq k} \beta_{kj}(n,t) \;\forall \;k,n,t$ and $\beta_{kj}(n,t)\geq 0 \;\forall \; k \neq j$.
We assume further that the state transitions of the different players
are independent, conditioned on $\bi$ and $\bn$.


From the symmetry and independence of transitions assumption, for $k \neq j$, we have
\begin{align*}
\mathbb{P}\Big(\bn_{t+h}=n+e_{jk}\|\bn_t&=n,\bi_t=i\Big)= \gamma_{\beta,kj}^{n,i}(t).h+o(h)\,,
\end{align*}
where $\lim \frac{o(h)}h =0$ when $ h \rar 0$ and the transition rates of the process $\bn$ are given by
\begin{align}\label{gamma}
\gamma_{\beta,kj}^{n,i}(t)&= n_k \beta_{kj}(n+e_{ik},t).
\end{align}
The previous expression for the rate, namely the term $n+e_{ik}$ instead of $n$, follows from the fact that from the point of view of a player which is in state $k$, and is distinct from the reference player,  the number of other players in any state is given by $\bn+e_{\bi}-e_k = \bn+e_{\bi k}$.
Note that the rate function $\beta$ is a deterministic time-dependent function, which makes $(\bn,\bi)$ a
non-time homogeneous Markov process.

\subsection{Individual player point of view}
\label{ippv}
\label{acdf}




The reference player would like to choose its transition rate $\alpha$, possibly different from $\beta$,
in order to minimize
\begin{equation}\label{defu}
u^i_n(t,\beta,\alpha)= \mathbb{E}^{\beta,\alpha}_{A_t(i, n)} \left[ \int_t^T c\left(\bi_s,\frac{\bn_s}N,\alpha(s)\right)ds + \psi^{\bi_T}\left(
\frac{\bn_T}N\right) \right]\;,
\end{equation}
 where the subscript $A_t(i, n)$ means we are considering the expectation conditioned on $\bi_t=i, \bn_t=n$.
That is, reference player looks for the control $\alpha$ which is a solution to the minimization problem
$$
u^i_n(t;\beta)= \inf_{\alpha} u^i_n(t,\beta,\alpha),
$$
where the minimization is performed over the set of all admissible controls $\alpha$.
We will call the function $u^i_n(t;\beta)$ above
the value function for the reference player  associated to
the strategy $\beta$ of the remaining $N$ players.
The control $\alpha$ that attains the minimum above can be called the best response (for the reference player) to a control $\beta$. 



\medskip

\subsection{The Hamilton-Jacobi ODE for the N+1-player game}\label{hjode}
\label{thjode}

Fix an admissible control $\beta$.
Consider the system of ODE´s indexed by $i$ and $n$ given by
\begin{equation}\label{HJ_prim}
- \frac{d\varphi^i_n}{dt} (t) =
 \sum_{k,j}\gamma_{\beta,kj}^{n,i} (t) \big(\varphi^i_{n+e_{jk}}(t)-\varphi^i_n(t)\big)
 + h\left(\Delta_i\varphi_n (t),  \frac n N, i\right) \,,
 \end{equation}
where
$\gamma_\beta$ is given by \eqref{gamma}, and, as before,
 $\Delta_i\varphi_n (t) = \big(\varphi_n^1(t)-\varphi_n^i(t), \hdots , \varphi_n^d(t)-\varphi_n^i(t)\big)$.

This system of ODE is called the Hamilton-Jacobi (HJ) ODE for the $N+1$-player game associated to the strategy  $\beta$ of the remaining $N$ players.

We denote by
\begin{equation}\label{NormaInf}
\|u(t)\|_{\infty}=\max_{n,i} |u_n^i( t)|,
\end{equation}

The proof of the next Proposition is analogous to the proof of Proposition \ref{boundedness} and it is postponed to the Appendix.
\begin{proposition}\label{maxpri}
Let $u$ be a solution to \eqref{HJ_prim} and $\displaystyle M=\max_{(i, \theta)\in I_d\times \mathcal{S}^d} |h(0, \theta, i)|.$
Then for all $0\leq t \leq T$ we have
\[
 \|u(t)\|_{\infty}\leq \|u(T)\|_{\infty}+2M (T-t).
 \]

\end{proposition}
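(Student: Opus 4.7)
The plan is to follow the argument of Proposition \ref{boundedness} almost verbatim, the only new ingredient being that the Hamilton-Jacobi ODE \eqref{HJ_prim} carries a linear nonlocal term $\sum_{k,j}\gamma_{\beta,kj}^{n,i}(t)\bigl(\varphi^i_{n+e_{jk}}-\varphi^i_n\bigr)$ not present in \eqref{HJ}. What I need to check is that this extra term has the correct sign at an extremum of a suitable shifted function, so that the rest of the argument transfers without any change.

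Concretely, I would introduce $\tilde u^i_n(t)=u^i_n(t)+\rho(T-t)$ for a real parameter $\rho$. Since the shift $\rho(T-t)$ depends only on $t$, it cancels in all differences in $i$ and in $n$, so $\tilde u$ satisfies
\begin{equation*}
-\frac{d\tilde u^i_n}{dt}(t)=\sum_{k,j}\gamma_{\beta,kj}^{n,i}(t)\bigl(\tilde u^i_{n+e_{jk}}(t)-\tilde u^i_n(t)\bigr)+h\bigl(\Delta_i\tilde u_n(t),\tfrac n N,i\bigr)+\rho.
\end{equation*}
I would then pick a minimizer $(i_0,n_0,t_0)$ of $\tilde u$ on the compact set $I_d\times S^d_N\times[0,T]$. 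As in Proposition \ref{boundedness}, the inequality $\Delta_{i_0}\tilde u_{n_0}(t_0)\geq 0$ together with the monotonicity $h(\Delta_i p,\theta,i)\geq h(0,\theta,i)\geq -M$ (which follows from $\alpha^*\geq 0$) handles the Hamiltonian term. For the nonlocal term, the diagonal entries $j=k$ vanish since $e_{jk}=0$; for $j\neq k$ the rate $\gamma_{\beta,kj}^{n_0,i_0}(t_0)=n_0^k\beta_{kj}(n_0+e_{i_0 k},t_0)$ is nonnegative, and whenever it is strictly positive necessarily $n_0^k\geq 1$, so $n_0+e_{jk}\in S^d_N$ and the minimizing property gives $\tilde u^{i_0}_{n_0+e_{jk}}(t_0)-\tilde u^{i_0}_{n_0}(t_0)\geq 0$. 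Both contributions being nonnegative, one obtains $-d\tilde u^{i_0}_{n_0}/dt(t_0)\geq \rho-M$, which is strictly positive for any $M<\rho<2M$; this rules out $t_0<T$ and yields, for any such $\rho$, $u^i_n(t)\geq -\|u(T)\|_\infty-\rho(T-t)$. Sending $\rho\to 2M$ gives the lower half of the claimed inequality.

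The upper bound is handled symmetrically: choose $-2M<\rho<-M$ and inspect a maximizer $(i_1,n_1,t_1)$ of $\tilde u$. There $\Delta_{i_1}\tilde u_{n_1}\leq 0$, the differences $\tilde u^{i_1}_{n_1+e_{jk}}-\tilde u^{i_1}_{n_1}$ are all nonpositive, and the same computation yields $-d\tilde u^{i_1}_{n_1}/dt(t_1)\leq \rho+M<0$, so the maximum is again attained at $t=T$.

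The only nonroutine point compared with Proposition \ref{boundedness} is verifying the sign of the nonlocal sum at an extremum. This rests on two complementary features of the rate $\gamma_{\beta,kj}^{n,i}$: nonnegativity precisely off the diagonal, and the fact that the factor $n^k$ makes the rate vanish on exactly those states from which a $k\to j$ transition would leave $S^d_N$. Once this compatibility between the generator and the state-space constraint is in place, the shifted-function maximum principle of Proposition \ref{boundedness} applies verbatim.
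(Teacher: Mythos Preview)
Your proof is correct and follows essentially the same argument as the paper's own proof: introduce the shifted function $\tilde u^i_n=u^i_n+\rho(T-t)$, observe that at a minimum (resp.\ maximum) both the nonlocal sum $\sum_{k,j}\gamma^{n,i}_{\beta,kj}(\tilde u^i_{n+e_{jk}}-\tilde u^i_n)$ and the Hamiltonian contribution have the right sign, and conclude via the choice $M<\rho<2M$ (resp.\ $-2M<\rho<-M$) that the extremum lies at $t=T$. Your remark that $n_0^k=0$ forces $\gamma^{n_0,i_0}_{\beta,kj}=0$, so that only admissible neighbors $n_0+e_{jk}\in S^d_N$ actually appear, is a welcome clarification the paper leaves implicit.
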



As a consequence of $h$ 
being locally Lipschitz continuous, Picard Theorem, together with the previous bound, allow us to establish
\begin{theorem}\label{existunic_HJ_Npl}
The terminal value problem (TVP) given by equation \eqref{HJ_prim} and the terminal condition $\varphi^i_n(T)=\psi^i( \frac n N)$ has a unique solution.
\end{theorem}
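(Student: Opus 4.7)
The plan is to view \eqref{HJ_prim} as a finite-dimensional system of coupled ODEs on $\mathbb{R}^{d\cdot |S^d_N|}$ with unknown $\varphi=(\varphi^i_n)_{(i,n)\in I_d\times S^d_N}$, and to apply Picard--Lindel\"of backwards in time from $t=T$. Local existence and uniqueness comes from local Lipschitz continuity of the right-hand side, while global existence on $[0,T]$ is then obtained by ruling out finite-time blow-up via the a priori maximum principle already established in Proposition \ref{maxpri}.

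First I would check local Lipschitz continuity of the right-hand side in $\varphi$, uniformly in $t\in[0,T]$. The linear term $\sum_{k,j}\gamma_{\beta,kj}^{n,i}(t)\bigl(\varphi^i_{n+e_{jk}}-\varphi^i_n\bigr)$ has coefficients $\gamma_{\beta,kj}^{n,i}(t)=n_k\beta_{kj}(n+e_{ik},t)$ which are bounded (since $\beta$ is admissible and $n_k\le N$) and continuous in $t$. Hence this part is in fact globally Lipschitz in $\varphi$, with Lipschitz constant independent of $t$. The nonlinear term $h\bigl(\Delta_i\varphi_n,n/N,i\bigr)$ is locally Lipschitz in $\varphi$ by Proposition \ref{Lipschitz0}(c), with Lipschitz constant uniform on sets where $\Delta_i\varphi_n$ is bounded, since $n/N\in\mathcal{S}^d$ stays in a fixed compact set.

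By standard ODE theory there is then a unique maximal solution on some interval $(T-\tau,T]$ satisfying the terminal condition $\varphi^i_n(T)=\psi^i(n/N)$. On any such interval of existence, Proposition \ref{maxpri} gives the a priori bound
\[
\|\varphi(t)\|_\infty\ \le\ \|\psi\|_\infty + 2M(T-t),
\]
which depends only on $T$, $\psi$ and $M$, not on the length of the interval. This bounds $\Delta_i\varphi_n$ uniformly on $[0,T]$, so the Lipschitz constant of the full right-hand side is controlled by a constant $L_T$ depending only on the data. The standard continuation argument then forces $\tau\ge T$, giving a solution on all of $[0,T]$.

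Uniqueness is a direct consequence: if $\varphi$ and $\tilde\varphi$ are two solutions with the same terminal data, both satisfy the bound of Proposition \ref{maxpri}, so by the uniform Lipschitz property on this region a Gronwall inequality run backwards from $t=T$ yields $\varphi\equiv\tilde\varphi$ on $[0,T]$. The only genuine step beyond routine ODE theory is the a priori bound supplied by Proposition \ref{maxpri}; without it, the nonlinear Hamiltonian term could in principle drive a blow-up in finite time.
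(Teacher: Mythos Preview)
Your proof is correct and follows exactly the approach the paper sketches: local Lipschitz continuity of the right-hand side (via Proposition~\ref{Lipschitz0}(c) for $h$ and boundedness of the admissible $\beta$ for the linear part) gives local existence and uniqueness by Picard, and the a priori bound of Proposition~\ref{maxpri} rules out blow-up to yield a global solution on $[0,T]$. The paper states this in one line; you have simply written out the details.
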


\subsection{A verification Theorem for the N+1-player game}\label{verifteo}

Now we state  a verification Theorem, which is completely analogous to the respective verification Theorem of the preceding section:
The corresponding proof can be found in the Appendix.
\begin{theorem}\label{verif-Npl}
Let $v$ be a solution to \eqref{HJ_prim} satisfying the
terminal condition $v^i_n(T)=\psi^i\left(\frac{n}N\right)$.
Then
$$u^i_n(t;\beta)=v^i_n(t)\,.$$
Also, the Markovian control
\begin{equation}\label{control}
\tilde \alpha(\beta)(i,n,t)\equiv \alpha^* \left(\Delta_i v_n( t), \frac n N, i\right),
\end{equation}
is admissible 
and satisfies
$$u^i_n(t;\beta)= u^i_n(t,\beta,\tilde \alpha(\beta))\,.$$
\end{theorem}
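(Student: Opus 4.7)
The plan is to mirror the proof of Theorem \ref{MFG:VerT}, now applying Dynkin's formula to the joint Markov process $(\bi_s, \bn_s)$ rather than to $\bi_s$ alone. Fix an admissible control $\alpha$ for the reference player. With $\beta$ fixed, $(\bi_s, \bn_s)$ is a continuous-time inhomogeneous Markov chain on $I_d\times \Sdn$ whose infinitesimal generator at time $s$, acting on a function $\varphi:I_d\times\Sdn\to\Rr$, is
\[
(\mathcal{A}^{\alpha,\beta}_s \varphi)^i_n \;=\; \sum_j \alpha_j(s)\bigl(\varphi^j_n-\varphi^i_n\bigr) \;+\; \sum_{k,j}\gamma^{n,i}_{\beta,kj}(s)\bigl(\varphi^i_{n+e_{jk}}-\varphi^i_n\bigr),
\]
where the first sum records the motion of $\bi$ under the control $\alpha$ and the second the motion of $\bn$ under $\beta$ via \eqref{gamma}. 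Because the rates are bounded and continuous in time, the Dynkin formula of \eqref{Dynkin-mfg} applies verbatim to this joint chain.

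Applying Dynkin to $v$ between $t$ and $T$ conditioned on $(\bi_t,\bn_t)=(i,n)$, using the terminal condition $v^i_n(T)=\psi^i(n/N)$, and adding the running cost $\int_t^T c(\bi_s,\bn_s/N,\alpha(s))\,ds$ to both sides yields, via the definition \eqref{defu},
\[
u^i_n(t,\beta,\alpha) \;=\; v^i_n(t) \;+\; \mathbb{E}^{\beta,\alpha}_{A_t(i,n)}\!\left[\int_t^T \frac{d v^{\bi_s}_{\bn_s}}{ds}+\bigl(\mathcal{A}^{\alpha,\beta}_s v\bigr)^{\bi_s}_{\bn_s}+c\!\left(\bi_s,\tfrac{\bn_s}{N},\alpha(s)\right)ds\right].
\]
Substituting $-dv^{\bi_s}_{\bn_s}/ds$ via the HJ equation \eqref{HJ_prim}, the $\gamma_\beta$-part of the generator cancels exactly, reducing the integrand to
\[
\sum_j \alpha_j(s)\bigl(v^j_{\bn_s}(s)-v^{\bi_s}_{\bn_s}(s)\bigr)+c\!\left(\bi_s,\tfrac{\bn_s}{N},\alpha(s)\right)-h\!\left(\Delta_{\bi_s}v_{\bn_s}(s),\tfrac{\bn_s}{N},\bi_s\right).
\]

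By the Legendre transform definition \eqref{LegendreTransform}, this integrand is non-negative for every admissible $\alpha(s)$, with equality precisely when $\alpha(s)=\alpha^*(\Delta_{\bi_s}v_{\bn_s}(s),\bn_s/N,\bi_s)=\tilde\alpha(\beta)(\bi_s,\bn_s,s)$. Hence $u^i_n(t,\beta,\alpha)\geq v^i_n(t)$ for every admissible $\alpha$, with equality attained by $\tilde\alpha(\beta)$; taking the infimum over $\alpha$ gives $u^i_n(t;\beta)=v^i_n(t)$ and the second identity of the theorem. To close the argument I must verify that $\tilde\alpha(\beta)$ is an admissible control: Proposition \ref{maxpri} bounds $\|v(t)\|_\infty$, hence $\|\Delta_i v_n(t)\|$, uniformly in $(n,i,t)$, so Proposition \ref{Lipschitz0}(b) produces a uniform bound on $\tilde\alpha(\beta)$; continuity in $t$ follows from $v$ being $C^1$ in time as a solution of the ODE \eqref{HJ_prim}, composed with the continuous map $\alpha^*$. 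The main technical point to watch is the book-keeping in the generator (making sure the $\gamma_\beta$ terms appearing in Dynkin's formula match those in \eqref{HJ_prim} so that they cancel); beyond that the argument is a direct adaptation of the mean field verification theorem.
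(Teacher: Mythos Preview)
Your proof is correct and follows essentially the same approach as the paper: apply Dynkin's formula to the joint chain $(\bi_s,\bn_s)$ with $\varphi=v$, use the terminal condition and the HJ equation \eqref{HJ_prim} to reduce the integrand, and invoke the Legendre transform \eqref{LegendreTransform} to get nonnegativity with equality at $\tilde\alpha(\beta)$. Your explicit verification of admissibility of $\tilde\alpha(\beta)$ via Propositions \ref{maxpri} and \ref{Lipschitz0}(b) is a detail the paper leaves implicit.
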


\medskip
Thus a classical solution to the HJ equation associated to $\beta$ is the value function corresponding to $\beta$ and determines an optimal admissible control $\tilde \alpha(\beta)$, for the reference player.




\subsection{Equilibrium solutions}
\label{eqsols}


We now consider Nash equilibria for the $N+1$-player game. For that we look
for controls $\beta$ for which the best response of any player to $\beta$ is $\beta$ itself.

\medskip

\begin{definition}
An admissible control $\beta$ is a Nash equilibrium if  $\tilde\alpha(\beta)=\beta$.
\end{definition}

\begin{theorem}\label{nash_ex_uniq}
There exists a unique Nash equilibrium $\bar\beta$.
\end{theorem}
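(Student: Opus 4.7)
The plan is to mirror the strategy from \S\ref{existmfg} for the MFG: establish existence by a Schauder fixed point argument applied to the best-response map $\beta \mapsto \tilde\alpha(\beta)$ defined in \eqref{control}, and obtain uniqueness by observing that at a Nash equilibrium the HJ system \eqref{HJ_prim} collapses, via the feedback relation, into a closed terminal-value problem for $v$ alone.

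For existence, given any admissible $\beta$, Theorem \ref{existunic_HJ_Npl} yields a unique $v(\beta)$, and Proposition \ref{maxpri} gives $\|v(\beta)\|_{\infty}\leq \|\psi\|_{\infty}+2MT$ with a bound independent of $\beta$. Hence $\Delta_i v_n(\beta)$ lies in a fixed bounded set, and by Proposition \ref{Lipschitz0}(b) so does $\tilde\alpha(\beta)=\alpha^*(\Delta_i v_n,n/N,i)$. Plugging these bounds back into \eqref{HJ_prim} makes $t\mapsto v_n^i(\beta,t)$ uniformly Lipschitz in $t$, and the Lipschitz property of $\alpha^*$ transfers this to uniform equicontinuity of $\tilde\alpha(\beta)$. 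I would then work in the Banach space $C([0,T];\re^{d\times d\times|\Sdn|})$ and let $\mathcal{K}$ be the closed convex subset of admissible controls satisfying these uniform bounds and equicontinuity estimate (note that the admissibility conditions $\beta_{kj}\geq 0$ for $j\neq k$ and $\beta_{kk}=-\sum_{j\neq k}\beta_{kj}$ are preserved by $\alpha^*$ because of \eqref{somazero} and because $\alpha^*$ is an argmin over $(\re_0^+)^d$). By Arzela--Ascoli, $\mathcal{K}$ is compact, and $\tilde\alpha$ maps $\mathcal{K}$ into itself. Continuity of $\beta\mapsto\tilde\alpha(\beta)$ follows from continuous dependence of ODE solutions on parameters combined with Proposition \ref{Lipschitz0}(b), so Schauder's fixed point theorem applies and produces an equilibrium $\bar\beta$.

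For uniqueness, if $\bar\beta$ is a Nash equilibrium then substituting $\bar\beta_{kj}(n,t)=\alpha^*_j(\Delta_k v_n(t),n/N,k)$ into \eqref{HJ_prim} produces a closed TVP for $v$ alone, namely
\begin{equation*}
-\frac{dv^i_n}{dt}=\sum_{k,j} n_k\,\alpha^*_j\!\left(\Delta_k v_{n+e_{ik}},\tfrac{n+e_{ik}}{N},k\right)\!\left(v^i_{n+e_{jk}}-v^i_n\right)+h\!\left(\Delta_i v_n,\tfrac{n}{N},i\right),
\end{equation*}
together with the terminal data $v^i_n(T)=\psi^i(n/N)$. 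The maximum principle of Proposition \ref{maxpri} still delivers the same a priori bound on $v$, because its proof depends only on the non-negativity of the transition rates, a property preserved by the feedback rates $n_k\alpha^*_j\geq 0$. On the bounded region thus produced, Proposition \ref{Lipschitz0}(b)--(c) implies that both $\alpha^*$ and $h$ are Lipschitz in their arguments, so the right-hand side of the closed TVP is Lipschitz in the finite-dimensional vector $(v^i_n)_{i,n}$. Picard's theorem then yields uniqueness of $v$, and the feedback formula forces uniqueness of $\bar\beta$. The most delicate step I anticipate is verifying that the maximum principle survives the feedback substitution, but as noted this only requires $\alpha^*_j\geq 0$, which holds by construction.
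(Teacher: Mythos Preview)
Your proposal is correct, but the Schauder argument for existence is an unnecessary detour: the uniqueness paragraph you wrote already contains the paper's entire proof of both existence and uniqueness. Once the feedback $\bar\beta_{kj}(n,t)=\alpha^*_j(\Delta_k v_n,\tfrac nN,k)$ is substituted into \eqref{HJ_prim}, one obtains the closed terminal-value problem \eqref{eqhj}--\eqref{gammaeq}; the maximum principle (which, as you correctly note, survives because the feedback rates $n_k\alpha^*_j$ remain nonnegative) confines $v$ to a bounded set, on which the right-hand side is Lipschitz by Proposition \ref{Lipschitz0}, and Picard then gives a unique global solution $v$. Defining $\bar\beta$ from this $v$ via the feedback formula, equation \eqref{HJ_prim} with parameter $\bar\beta$ coincides with the closed system \eqref{eqhj}, so Theorem \ref{verif-Npl} yields $\tilde\alpha(\bar\beta)=\bar\beta$; existence and uniqueness thus come out of the same stroke. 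The reason a fixed-point argument was genuinely needed in \S\ref{existmfg} but not here is structural: the mean field system \eqref{PVIT}--\eqref{PVITDATA} is an initial-terminal problem (forward $\theta$, backward $u$) that Picard cannot handle directly, whereas in the $N+1$-player game the distribution is carried by the discrete index $n$ rather than by a coupled forward ODE, so the equilibrium collapses to a pure backward terminal-value problem. Your Schauder route works, but it imports machinery that the finite-player structure renders superfluous.
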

\begin{proof}
A necessary condition for a control $\bar\beta$ to be a Nash equilibrium is that
from \eqref{control}, we have
\[
{\bar\beta}_{kj}(n,t)=\alpha^*_j\left(\Delta_k u_n( t;\bar\beta), \frac n N, k\right).
\]
Hence this gives rise to the system of  nonlinear differential equations
\begin{equation}
\label{eqhj}
- \frac{du_n^i}{dt}  =
 \sum_{k,j} \gamma^{n,i}_{ kj} (u^i_{n+e_{jk}} - u^i_n  )  + h\left(\Delta_i u_n, \frac n N, i\right)\,,
\end{equation}
with terminal condition
\begin{equation}\label{term-npl}
    u^i_n(T)=\psi^i\left(\frac{n}N\right)\,\,\,\,\forall\,\, i\in I_d, n\in\Sss^d_N ,
\end{equation}
 where $\gamma^{n,i}_{ kj}$ are given by
\begin{align}\label{gammaeq}
\gamma^{n,i}_{ kj}&= n_k \alpha_j^*\Bigg(\Delta_k u_{n+e_{ik}},\frac{n+e_{ik}}{N},k\Bigg)\,.
\end{align}
Note that \eqref{eqhj} is well posed because $u_n$ is bounded and the right-hand side is Lipschitz and admits a unique solution. Hence
existence and uniqueness of a Nash equilibrium follows.
\end{proof}

The following property of  $\gamma^{n,i}_{ kj}$ will be proved in the Appendix:
\begin{lemma} \label{estimgamma}Let us suppose that $\|\Delta_k u_n\|_{\infty}$ is bounded, and denote by $z^l_{n,sr}= u^l_{n+e_{rs}}-u^l_n. $ Then we have
\begin{equation}
\Big|\gamma_{kj}^{n+e_{rs},i}-\gamma_{kj}^{n,i}\Big|\leq C+CN\max_{rs}\|z^{\cdot}_{\cdot, sr} \|_{\infty}.
\end{equation}
\end{lemma}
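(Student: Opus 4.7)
The plan is to exploit the product structure $\gamma^{n,i}_{kj} = n_k\,\alpha_j^*\bigl(\Delta_k u_{n+e_{ik}}, \tfrac{n+e_{ik}}{N},k\bigr)$ and split the difference into (i) a boundary term coming from the prefactor $n_k$ changing by at most $1$, and (ii) a bulk term coming from $\alpha^*$ evaluated at two nearby arguments, scaled by $n_k \leq N$. I would first write
\begin{align*}
\gamma^{n+e_{rs},i}_{kj} - \gamma^{n,i}_{kj}
&= (e_{rs})_k\,\alpha_j^*\Bigl(\Delta_k u_{n+e_{rs}+e_{ik}},\tfrac{n+e_{rs}+e_{ik}}{N},k\Bigr)\\
&\quad + n_k\Bigl[\alpha_j^*\Bigl(\Delta_k u_{n+e_{rs}+e_{ik}},\tfrac{n+e_{rs}+e_{ik}}{N},k\Bigr)-\alpha_j^*\Bigl(\Delta_k u_{n+e_{ik}},\tfrac{n+e_{ik}}{N},k\Bigr)\Bigr],
\end{align*}
so that the two terms can be estimated independently.

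For the first (boundary) term I would use that $|(e_{rs})_k|\leq 1$ together with the boundedness of $\alpha^*$. The latter is a standing consequence of Proposition~\ref{Lipschitz0}(b): since $\alpha^*$ is Lipschitz in $p$ (with constant $1/\gamma$) and in $\theta$ (with constant $K_c/\gamma$), and $\Sss^d$ is compact, the hypothesis $\|\Delta_k u_n\|_\infty \leq C$ yields $\|\alpha^*(\Delta_k u_{\cdot},\cdot,k)\|\leq C$. Hence this term contributes at most a constant $C$.

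For the second (bulk) term I would apply the Lipschitz estimates of Proposition~\ref{Lipschitz0}(b) componentwise:
\begin{align*}
&\Bigl\|\alpha^*\!\Bigl(\Delta_k u_{n+e_{rs}+e_{ik}},\tfrac{n+e_{rs}+e_{ik}}{N},k\Bigr)-\alpha^*\!\Bigl(\Delta_k u_{n+e_{ik}},\tfrac{n+e_{ik}}{N},k\Bigr)\Bigr\|\\
&\qquad \leq \tfrac{1}{\gamma}\bigl\|\Delta_k u_{n+e_{rs}+e_{ik}}-\Delta_k u_{n+e_{ik}}\bigr\| + \tfrac{K_c}{\gamma}\tfrac{\|e_{rs}\|}{N}.
\end{align*}
The second piece contributes $\tfrac{n_k}{N}\cdot\tfrac{C}{N}\cdot N = O(1)$, absorbed into the constant $C$. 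For the first piece I observe that for any index $l$,
\[
(\Delta_k u_{n+e_{rs}+e_{ik}}-\Delta_k u_{n+e_{ik}})^l = z^l_{n+e_{ik},sr}-z^k_{n+e_{ik},sr},
\]
so $\|\Delta_k u_{n+e_{rs}+e_{ik}}-\Delta_k u_{n+e_{ik}}\|\leq 2\max_{rs}\|z^{\cdot}_{\cdot,sr}\|_\infty$. Since $n_k \leq N$, this piece is bounded by $CN\max_{rs}\|z^{\cdot}_{\cdot,sr}\|_\infty$, completing the estimate.

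The argument is essentially bookkeeping; the only mild subtlety is the apparent loss of a factor of $N$ in the bulk term, which is harmless because it is precisely the factor one expects to see when later combining this lemma with the Gronwall-type argument driving the convergence theorem of \S\ref{convsec}. The main care point is keeping the conventions $e_{jk}=e_j-e_k$ and $z^l_{n,sr}=u^l_{n+e_{rs}}-u^l_n$ straight so that the subscripts on $z$ come out correctly.
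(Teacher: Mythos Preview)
Your proof is correct and follows essentially the same approach as the paper's: split via the product structure into a prefactor term (bounded by $|\alpha^*|$) and a term scaled by $n_k\leq N$ (controlled by the Lipschitz property of $\alpha^*$ in $(p,\theta)$). The only cosmetic difference is that the paper writes the $\alpha^*$-difference as two separate pieces (first vary $p$, then vary $\theta$), whereas you apply the combined Lipschitz bound in one stroke; the substance is identical.
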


\section{Convergence}
\label{convsec}

This last section addresses the convergence as the number of players tends to infinity to the mean field model
derived in the previous section.

We start this section by discussing some preliminary estimates in \S\ref{pest}. Then, in
\S \ref{uest} we establish uniform estimates for $|u_{n+e_{rs}}-u_n|$, which are essential
to prove our main result, Theorem \ref{teoconv}, which is discussed in \S\ref{convsubsec}.
This theorem shows that the model derived in the previous section can be obtained as an appropriate limit
of the model with $N+1$ players discussed in section \ref{nplayer}.

\subsection{Preliminary results}
\label{pest}
Let us denote by $m=(i,n)\in I_d\times\Sdn$, and
consider the system of ordinary differential equations
\begin{align*}
-\dot z^i_n & = \sum_{k,j} a^{i}_{n,kj} \bigg(z^i_{n+e_{jk}}-z^i_n\bigg) + \sum_l a_{n}^{l,i} (z^l_n-z^i_n),
\end{align*}
where $a^{i}_{n,kj} \geq 0$ and $a_{n}^{l,i} \geq 0$.
Note that this system is a particular case of 
\begin{equation}
\label{unpert}
-\dot z_m=\sum_{m'\in I_d\times\Sdn}a_{mm'}(t) (z_{m'}-z_m),
\end{equation}
where $a_{mm'}(t)\geq 0$.
We write \eqref{unpert} in compact form as
\begin{equation}
\label{unpert2}
-\dot z(t)=M(t)z(t).
\end{equation}
The solution to this equation with terminal data $z(T)$ can be written as
\begin{equation}\label{unpert3}
z(t)=K(t,T)z(T),
\end{equation}
where $K(t, T)$ is the fundamental solution to \eqref{unpert2} with $K(T,T)=I$. Note that equations \eqref{unpert2} and \eqref{unpert3} imply
\begin{equation}\label{unpert4}\frac{d}{dt}K(t,T)=-M(t)K(t,T). \end{equation}

The proofs of Lemma \ref{contprop}, Lemma \ref{413} and Lemma \ref{tfc}  can be found in Appendix.

\begin{lemma}
\label{contprop}
For $t<T$ we have
\[
\|z(t)\|_{\infty}\leq \|z(T)\|_{\infty}
\]
(see \eqref{NormaInf}).
Furthermore, if $z(T)\leq 0$ then $z(t)\leq 0$.
\end{lemma}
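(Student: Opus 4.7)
My plan is to prove the two claims in reverse order: first the sign-preservation statement (if $z(T)\leq 0$ then $z(t)\leq 0$), since the $L^{\infty}$ bound follows quickly from it, and then the norm estimate.

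For the sign-preservation part, I would use the standard parabolic/ODE maximum principle with a strict perturbation. Namely, set $\tilde z_m(t) = z_m(t) + \epsilon(T-t)$ for $\epsilon>0$. Since the matrix in \eqref{unpert2} annihilates constants (the diagonal of $M(t)$ in each row is exactly $-\sum_{m'\neq m} a_{mm'}(t)$, as read off from \eqref{unpert}), $\tilde z$ satisfies
\[
-\dot{\tilde z}_m(t)=\sum_{m'} a_{mm'}(t)(\tilde z_{m'}-\tilde z_m)-\epsilon.
\]
Suppose for contradiction that $\tilde z$ attains a strictly positive maximum on the compact set $[0,T]\times(I_d\times\Sdn)$, say at $(t^*,m^*)$. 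Because $\tilde z(T)=z(T)\leq 0$, we must have $t^*<T$. If $t^*>0$, interior maximality forces $\dot{\tilde z}_{m^*}(t^*)=0$, while the ODE at that point gives
\[
-\dot{\tilde z}_{m^*}(t^*)=\sum_{m'} a_{m^*m'}(t^*)\bigl(\tilde z_{m'}(t^*)-\tilde z_{m^*}(t^*)\bigr)-\epsilon\leq -\epsilon<0,
\]
a contradiction. If $t^*=0$, the max being at the left endpoint gives $\dot{\tilde z}_{m^*}(0)\leq 0$, so $-\dot{\tilde z}_{m^*}(0)\geq 0$, which again contradicts the right-hand side being $\leq -\epsilon$. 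Hence $\tilde z\leq 0$ everywhere, i.e.\ $z(t)\leq -\epsilon(T-t)\leq 0$.

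For the norm bound, let $C=\|z(T)\|_{\infty}$ and apply the sign-preservation principle twice, exploiting linearity. First, $w:=z-C\mathbf{1}$ solves $-\dot w=M(t)w$ because constants lie in the kernel of $M(t)$, and $w(T)\leq 0$ componentwise; hence $z(t)\leq C\mathbf{1}$. Second, $w:=-z-C\mathbf{1}$ also solves the same linear equation and satisfies $w(T)\leq 0$, so $-z(t)\leq C\mathbf{1}$. Combining the two inequalities yields $\|z(t)\|_{\infty}\leq C=\|z(T)\|_{\infty}$.

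The only delicate point I anticipate is the bookkeeping at $t^*=0$: one must note that a left-endpoint maximum still forces $-\dot{\tilde z}_{m^*}(0)\geq 0$, so the strict slack $-\epsilon$ still yields a contradiction. Everything else is routine, once one observes that the Kolmogorov-type structure $M(t)\mathbf{1}=0$ and the nonnegativity of the off-diagonal rates $a_{mm'}(t)$ give the comparison principle for free.
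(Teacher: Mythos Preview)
Your argument has a sign slip in the perturbation step that breaks the contradiction. With $\tilde z_m(t)=z_m(t)+\epsilon(T-t)$ one has $\dot{\tilde z}_m=\dot z_m-\epsilon$, hence
\[
-\dot{\tilde z}_m(t)=\sum_{m'}a_{mm'}(t)\bigl(\tilde z_{m'}-\tilde z_m\bigr)+\epsilon,
\]
with $+\epsilon$ on the right, not $-\epsilon$. At a maximum over $m$ the sum is nonpositive, so you only get $-\dot{\tilde z}_{m^*}(t^*)\leq\epsilon$, which yields no contradiction with $-\dot{\tilde z}_{m^*}(t^*)\geq 0$. The cure is to take the opposite perturbation $\tilde z=z+\epsilon(t-T)$, as the paper does for the maximum; then the forcing term is $-\epsilon$, and your argument at $t^*<T$ (including your careful treatment of the left endpoint $t^*=0$) goes through verbatim and gives $z_m(t)\leq\max_{m'} z_{m'}(T)+\epsilon(T-t)$, hence $z(t)\leq 0$ after $\epsilon\to 0$.

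Once sign preservation is fixed, your deduction of the $\|\cdot\|_\infty$ bound by applying it to $z-C\mathbf{1}$ and $-z-C\mathbf{1}$ (using $M(t)\mathbf{1}=0$) is correct and is a slightly cleaner packaging than the paper's proof, which instead runs the maximum and the minimum arguments separately with two perturbations rather than reducing both to a single sign-preservation lemma.
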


From the previous Lemma we also conclude
\begin{lemma}\label{ordpr}
If $p_1\leq p_2$, and $t\leq s$, then we have
\[
K(t,s)p_1\leq K(t, s)p_2,
\]
which means $K(t,s)$ is an order preserving operator.
\end{lemma}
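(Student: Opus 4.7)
The proof is a direct corollary of the second statement in Lemma \ref{contprop}, exploiting the linearity of the evolution governed by $K(t,s)$. The plan is as follows.

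First I would define $w(t) := K(t,s)p_1 - K(t,s)p_2 = K(t,s)(p_1-p_2)$, using linearity of the fundamental solution. By construction, $w$ solves the homogeneous system $-\dot w(t) = M(t) w(t)$ on $[t_0, s]$ for any $t_0 \leq s$, with terminal data $w(s) = p_1 - p_2 \leq 0$ by hypothesis.

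Next I would apply Lemma \ref{contprop} with terminal time $s$ in place of $T$ and with $w$ playing the role of $z$. That lemma yields that if the terminal value is non-positive, then the solution is non-positive backwards in time, so $w(t) \leq 0$ for all $t \leq s$. This rewrites to $K(t,s)p_1 \leq K(t,s)p_2$, which is precisely the claimed order-preservation.

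There is no genuine obstacle here since the argument is just linearity plus the sign-preservation statement already established. The only thing worth double-checking is that Lemma \ref{contprop} is stated with a generic terminal time, so it applies equally well with terminal time $s$ rather than $T$; this is immediate because the equation \eqref{unpert2} is non-autonomous but the conclusion of Lemma \ref{contprop} is proved for arbitrary terminal data, and the fundamental solution $K(\cdot, s)$ is exactly the one associated to initial data imposed at $s$.
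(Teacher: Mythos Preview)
Your proof is correct and follows exactly the paper's approach: set $p_1 - p_2 \leq 0$ and apply Lemma \ref{contprop} to conclude $K(t,s)(p_1-p_2) \leq 0$. You have simply spelled out the linearity and the terminal-time substitution in more detail than the paper's one-line version.
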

\begin{proof}
Observe that if $p_1-p_2\leq 0$ then $K(t, s)(p_1-p_2)\leq 0$, by Lemma \ref{contprop}.
\end{proof}

\begin{lemma}\label{413}
Suppose $z$ is a solution to
\begin{equation}
\label{difeneq}
-\dot z(s)\leq M(s) z(s) +f(z(s)).
\end{equation}
where $M(t)$ was defined in \eqref{unpert} and \eqref{unpert2}.
Then, for all $m=(i,n)\in I_d\times \Sdn$
\[
z^i_n(t)=z_m(t)\leq \|z(T)\|_{\infty}+\int_t^T \|f(z(s))\|_{\infty}ds.
\]
\end{lemma}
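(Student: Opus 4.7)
The plan is to derive a Duhamel-type representation from the differential inequality and then reduce to a scalar bound using the fact that constants lie in the kernel of $M(t)$.

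First, I would rewrite \eqref{difeneq} as the equality
\[
-\dot z(s) = M(s) z(s) + f(z(s)) - g(s),
\]
where $g(s) \geq 0$ (componentwise) is the slack vector. By the variation of constants formula associated to the linear equation \eqref{unpert2} (whose fundamental solution is $K(t,s)$), and using $z(T)$ as terminal data, we obtain
\[
z(t) = K(t,T)\, z(T) + \int_t^T K(t,s)\bigl[f(z(s)) - g(s)\bigr]\, ds.
\]
Since $g(s) \geq 0$ and $K(t,s)$ is order preserving by Lemma \ref{ordpr}, we have $K(t,s)\, g(s) \geq 0$, so dropping this nonnegative term yields the componentwise inequality
\[
z(t) \leq K(t,T)\, z(T) + \int_t^T K(t,s)\, f(z(s))\, ds.
\]

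Next, I would exploit the fact that the constant vector $\mathbf{1}$ lies in the kernel of $M(s)$, since $[M(s)\mathbf{1}]_m = \sum_{m'} a_{mm'}(s)(1-1) = 0$ by the very form of \eqref{unpert}. Therefore $\mathbf{1}$ is a stationary solution of \eqref{unpert2}, and
\[
K(t,s)\, \mathbf{1} = \mathbf{1} \qquad \forall \, t \leq s.
\]
Combining this with $z(T) \leq \|z(T)\|_\infty \mathbf{1}$ and $f(z(s)) \leq \|f(z(s))\|_\infty \mathbf{1}$ componentwise, and applying the order preservation of $K(t,s)$ once more, we obtain
\[
K(t,T)\, z(T) \leq \|z(T)\|_\infty \mathbf{1}, \qquad K(t,s)\, f(z(s)) \leq \|f(z(s))\|_\infty \mathbf{1}.
\]
Reading off the $m$-th component of the resulting inequality for $z(t)$ gives the desired estimate
\[
z_m(t) \leq \|z(T)\|_\infty + \int_t^T \|f(z(s))\|_\infty\, ds.
\]

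The main conceptual obstacle is that a differential inequality does not directly admit Duhamel; the slack trick resolves this cleanly provided $K(t,s)$ is order preserving, which is exactly Lemma \ref{ordpr}. The other delicate point is to argue componentwise rather than in $\|\cdot\|_\infty$, and here the identity $K(t,s)\mathbf{1}=\mathbf{1}$ does all the work: it upgrades a norm bound on the data to a componentwise bound on the propagated vector. Both ingredients are already in place from the preceding lemmas, so the argument reduces to assembling them in the right order.
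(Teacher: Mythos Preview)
Your argument is correct and essentially matches the paper's: both arrive at the Duhamel inequality $z(t)\leq K(t,T)z(T)+\int_t^T K(t,s)f(z(s))\,ds$ via order preservation (you introduce a slack $g\geq 0$ and drop $K(t,s)g(s)$, the paper multiplies the differential inequality by $K(t,s)$ directly), and both finish by the contractivity of $K$. The only cosmetic difference is that you invoke $K(t,s)\mathbf{1}=\mathbf{1}$ together with Lemma~\ref{ordpr} to pass to the $\|\cdot\|_\infty$ bound, whereas the paper cites Lemma~\ref{contprop}; these are equivalent since $M(s)\mathbf{1}=0$ is precisely what underlies Lemma~\ref{contprop}.
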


\begin{lemma}\label{tfc}
Suppose $v:[0,T]\to\re$ is a solution to the ODE with terminal condition
\begin{equation}\label{tfc_p}
\begin{cases}
-\frac{dv}{ds}= Cv+C N v^2 + \frac{C}{N},\\
v(T)\leq\frac{C}{N},
\end{cases}
\end{equation}
where $N$ is a natural number, and $C>0$. Then, there exists $T^{\star}>0$, which does not depend on $N$, such that
$T \leq T^{\star}$ implies
$v(s) \leq \frac{2C}{N}$ for all $0\leq s \leq T$.
\end{lemma}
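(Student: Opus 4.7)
The plan is to first rescale the problem so that $N$ drops out of the ODE entirely, and then run a simple continuity/bootstrap argument to control the solution over short times.

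Set $w(s) := N v(s)$. Multiplying the ODE by $N$ gives the $N$-independent equation
\[
-\frac{dw}{ds} = C w + C w^2 + C,\qquad w(T) \leq C,
\]
and the target conclusion $v(s)\leq 2C/N$ becomes $w(s)\leq 2C$. Let $F(w):=Cw+Cw^2+C$; on the compact interval $[0,2C]$ one has $F(w)\leq K$ with $K:=F(2C)=C+2C^2+4C^3$, a constant depending only on $C$.

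Next, run a bootstrap. Define
\[
t_0 := \inf\{\, t\in[0,T] : w(s) \leq 2C \text{ for all } s\in[t,T]\,\}.
\]
Since $w(T)\leq C<2C$ and $w$ is continuous, this set is a nonempty (closed) subinterval of $[0,T]$ containing $T$. If $t_0>0$, minimality together with continuity forces $w(t_0)=2C$. On $[t_0,T]$ we have $w\leq 2C$, hence $-\dot w = F(w)\leq K$; integrating from $t_0$ up to $T$ yields
\[
w(t_0) \;\leq\; w(T) + K(T-t_0) \;\leq\; C + KT.
\]
Now choose
\[
T^{\star} := \frac{C}{2K} \;=\; \frac{C}{2(C+2C^2+4C^3)},
\]
which depends only on $C$. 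For any $T\leq T^{\star}$ we obtain $w(t_0)\leq C+KT^{\star} = \tfrac{3C}{2} < 2C$, contradicting $w(t_0)=2C$. Hence $t_0 = 0$, so $w\leq 2C$ on all of $[0,T]$, i.e.\ $v\leq 2C/N$, as claimed.

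The only subtlety is the clean setup of the bootstrap: one has to be careful that the strict inequality $w(T)\leq C < 2C$ makes the set in the definition of $t_0$ nonempty and relatively open near $T$, so that the infimum (if positive) is attained at a point where $w$ actually equals $2C$. No growth-rate or smallness assumption on $N$ is used anywhere; the rescaling transfers all $N$-dependence into the constants $C/N$ and $2C/N$, and $T^{\star}$ emerges purely from the size of $F$ on $[0,2C]$.
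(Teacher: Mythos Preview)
Your argument is correct and, via the rescaling $w = Nv$, makes the $N$-independence of $T^\star$ transparent from the outset. The paper takes a different but equivalent route: it keeps $N$ in the equation, inverts the ODE to $\frac{ds}{dv} = -\frac{1}{Cv + CNv^2 + C/N}$, and computes directly that the backward time needed for $v$ to climb from $C/N$ to $2C/N$ is at least $\int_{C/N}^{2C/N} \frac{dv}{Cv + CNv^2 + C/N} \geq \frac{1}{1 + 2C + 4C^2} =: T^\star$. Your bootstrap is a repackaging of the same time-of-flight estimate; your $T^\star$ comes out half the paper's because you stop the contradiction at $3C/2$ rather than pushing all the way to $2C$, which is immaterial.

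One small gap: you bound $F(w) \leq K := F(2C)$ by citing the maximum of $F$ on $[0, 2C]$, but on $[t_0, T]$ you have only established $w \leq 2C$, not $w \geq 0$. Since the hypothesis is merely $w(T) \leq C$ with no lower bound, $w$ could in principle be very negative and $F(w) = C(w^2 + w + 1)$ could exceed $K$ there. The fix is immediate: since $w^2 + w + 1 > 0$ for all real $w$, we have $F > 0$ and hence $w$ is strictly decreasing in $s$; by standard ODE comparison the extremal case is $w(T) = C$, and then $w(s) \in [C, 2C]$ on $[t_0, T]$, so your bound on $F$ applies. (In the paper's application the terminal datum is a norm and hence nonnegative anyway, so the issue never arises in practice.)
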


%

\subsection{Gradient estimates}\label{uest}

In this section we prove "gradient estimates" for the $N+1$-player game, that is,
we assume that the difference $u_{n+e_{rs}}-u_n$ is of the order $\frac 1 N$ at time $T$
and show that it remains so for $0\leq t\leq T$, as long as $T$ is sufficiently small.

\begin{proposition}
\label{lipbounds}
Let $u^i_n(t)$ be a solution of \eqref{eqhj} with terminal conditions \eqref{term-npl}. Then there exists  $C>0$  and $T^{\star}>0$ such that, for   $0<T< T^{\star}$, we have
\[
\max_{rs}\|u^i_{n+e_{rs}}(t)-u^i_n(t)\|_{\infty}\leq \frac {2C} N,
\] for all $0\leq t\leq T$.
\end{proposition}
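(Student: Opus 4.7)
My approach is to derive a closed integral inequality for $Z(t):=\max_{r,s,i,n}|u^i_{n+e_{rs}}(t)-u^i_n(t)|$ and then compare it to the scalar Riccati-type ODE of Lemma \ref{tfc}. Fix indices $r,s$ and write $z^i_{n,sr}(t):=u^i_{n+e_{rs}}(t)-u^i_n(t)$. Subtracting \eqref{eqhj} at $n$ from the same equation at $n+e_{rs}$, after adding and subtracting the mixed term $\gamma^{n+e_{rs},i}_{kj}(u^i_{n+e_{jk}}-u^i_n)$, gives
\begin{align*}
-\dot z^i_{n,sr} &= \sum_{k,j}\gamma^{n+e_{rs},i}_{kj}\bigl(z^i_{n+e_{jk},sr}-z^i_{n,sr}\bigr)\\
&\quad+\sum_{k,j}\bigl(\gamma^{n+e_{rs},i}_{kj}-\gamma^{n,i}_{kj}\bigr)\bigl(u^i_{n+e_{jk}}-u^i_n\bigr)\\
&\quad+h\bigl(\Delta_i u_{n+e_{rs}},\tfrac{n+e_{rs}}{N},i\bigr)-h\bigl(\Delta_i u_n,\tfrac{n}{N},i\bigr).
\end{align*}
The first line on the right is an operator $M(t)z$ of the form \eqref{unpert2} with non-negative off-diagonal rates (indexed by $(i,n)$ with $r,s$ frozen), so Lemma \ref{413} is applicable. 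The last two lines are error terms that I will bound uniformly by a nonlinear function of $Z(t)$.

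For the rate-difference error, the maximum principle Proposition \ref{maxpri} gives a uniform bound on $\|\Delta_k u_n\|_{\infty}$, so Lemma \ref{estimgamma} yields $|\gamma^{n+e_{rs},i}_{kj}-\gamma^{n,i}_{kj}|\leq C+CN Z(t)$. Multiplying by $|u^i_{n+e_{jk}}-u^i_n|\leq Z(t)$ and summing over the $O(d^2)$ pairs $(k,j)$ produces a contribution bounded by $CZ(t)+CNZ(t)^2$. For the $h$-difference, Proposition \ref{Lipschitz0}(c) together with $\|\Delta_i u_{n+e_{rs}}-\Delta_i u_n\|\leq 2Z(t)$ and $\|(n+e_{rs})/N-n/N\|=O(1/N)$ yields a contribution bounded by $CZ(t)+C/N$. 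The Lipschitz continuity of $\psi$ in $\theta$ gives $Z(T)\leq C/N$. Applying Lemma \ref{413} to both $z^i_{n,sr}$ and $-z^i_{n,sr}$, with error bounded in absolute value by $CZ(s)+CNZ(s)^2+C/N$, and taking the supremum over $i,n,r,s$ on the left, gives
\[
Z(t)\leq\tfrac{C}{N}+\int_t^T\Bigl(CZ(s)+CNZ(s)^2+\tfrac{C}{N}\Bigr)ds\qquad\text{for all }0\leq t\leq T.
\]

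A standard comparison argument then shows $Z(t)\leq v(t)$, where $v$ solves the terminal-value problem \eqref{tfc_p} with $v(T)=C/N$, and Lemma \ref{tfc} supplies a $T^\star>0$, independent of $N$, such that $v(t)\leq 2C/N$ on $[0,T]$ whenever $T\leq T^\star$. I expect the main obstacle to be the quadratic term $CNZ(s)^2$ produced by Lemma \ref{estimgamma}: this term has an $N$ in front and therefore cannot be absorbed by a purely linear Gronwall estimate, since it blows up under the factor $N$ unless one already knows $Z$ is of order $1/N$. It is precisely the presence of this super-linearly-in-$N$ term that forces the smallness hypothesis $T\leq T^\star$ and motivates the Riccati-type scalar comparison provided by Lemma \ref{tfc}.
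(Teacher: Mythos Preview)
Your proof is correct and follows essentially the same route as the paper: derive a differential inequality for $z^i_{n,sr}$, apply Lemma \ref{413} to obtain the integral inequality $Z(t)\le C/N+\int_t^T(CZ+CNZ^2+C/N)\,ds$, and then invoke the Riccati comparison of Lemma \ref{tfc}. The only differences are cosmetic: the paper uses the symmetric splitting $\tfrac{\gamma+\gamma'}{2},\ \tfrac{\gamma-\gamma'}{2}$ where you add and subtract $\gamma^{n+e_{rs},i}_{kj}(u^i_{n+e_{jk}}-u^i_n)$, and it handles the $h$-difference via the concavity bound of Proposition~\ref{Lipschitz0}(a), producing an extra linear coupling $\sum_l\alpha^*_l(z^l_{n,sr}-z^i_{n,sr})$ in the $i$ variable, whereas you absorb that term directly into the error via the Lipschitz estimate of Proposition~\ref{Lipschitz0}(c); both lead to the same integral inequality and conclusion.
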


Before proving the Proposition, we remember the norm $\| \cdot \|_{\infty}$ was defined in \eqref{NormaInf}.

\begin{proof}
Using the terminal condition \eqref{term-npl} and remembering that $\psi$ is Lipschitz continuous, we know that there is a constant $C>0$ such that
\begin{equation}\label{lbh}
\max_{rs}\|u^i_{n+e_{rs}}(T)-u^i_n(T)\|_{\infty}\leq \frac C N.
\end{equation}

Let $z^{i}_{n,sr}=u^i_{n+e_{rs}}-u^i_n$. 
We have
\begin{align*}
-\dot z_{n,sr}^{i}=&\sum_{k,j}\left[\gamma_{kj}^{n+e_{rs},i}\bigg(u^i_{n+e_{rs}+e_{jk}}-u^i_{n+e_{rs}}\bigg)-\gamma_{kj}^{n,i }\bigg(u^i_{n+e_{jk}}-u^i_{n}   \bigg)           \right]\\
+& h\bigg(\Delta_i u_{n+e_{rs}},\frac{n+e_{rs}}{N},i\bigg)-h\bigg(\Delta_i u_{n},\frac{n}{N},i\bigg)\\
=&\sum_{k,j}\left[\gamma_{kj}^{n+e_{rs},i}z^{i}_{n+e_{rs},kj}-\gamma_{kj}^{n,i}\,z^{i}_{n,kj}           \right]
+ h\bigg(\Delta_i u_{n+e_{rs}},\frac{n+e_{rs}}{N},i\bigg)-h\bigg(\Delta_i u_{n},\frac{n}{N},i\bigg)\\
=&\sum_{k,j}\left[ \bigg(\frac{\gamma_{kj}^{n+e_{rs},i}+\gamma_{kj}^{n,i}}{2}\bigg) \bigg(z^{i}_{n+e_{rs},kj}-z^{i}_{n,kj}\bigg)\right]+\sum_{k,j}\left[ \bigg(\frac{\gamma_{kj}^{n+e_{rs},i}-\gamma_{kj}^{n,i}}{2}\bigg) \bigg(z^{i}_{n+e_{rs},kj}+z^{i}_{n,kj}\bigg)\right]\\
+&h\bigg(\Delta_i u_{n+e_{rs}},\frac{n+e_{rs}}{N},i\bigg)-h\bigg(\Delta_i u_{n},\frac{n}{N},i\bigg).
\end{align*}
Note that $z^{i}_{n+e_{rs},kj}-z^{i}_{n,kj}=u^i_{n+e_{rs}+e_{jk}}-u^i_{n+e_{rs}}-u^i_{n+e_{jk}} +u^i_n =z^{i}_{n+e_{jk},sr}-z^{i}_{n,sr}$.

From Lemma \ref{estimgamma}, we have
$\displaystyle\left|\bigg(\frac{\gamma_{kj}^{n+e_{rs},i}-\gamma_{kj}^{n ,i}}{2}\bigg)\right|\leq C+CN\max_{rs}\|z_{\cdot,sr}^{\cdot}\|_{\infty}.$
And note that
\[
\displaystyle \sum_{k,j} \bigg(z^{i}_{n+e_{rs},kj}+z^{i}_{n,kj}\bigg)\leq 2\sum_{k,j} \|z^\cdot_{\cdot,kj} \|_{\infty}\leq 2d^2 \max_{k,j}\|z^\cdot_{\cdot,kj} \|_{\infty}.
\]
 Hence
$$\sum_{k,j}\left[ \bigg(\frac{\gamma_{kj}^{n+e_{rs},i}-\gamma_{kj}^{n ,i}}{2}\bigg) \bigg(z^{i}_{n+e_{rs},kj}+z^{i}_{n,kj}\bigg)\right]\leq C\max_{rs}\|z^\cdot_{\cdot,kj}\|_{\infty}+CN\max_{rs}\|z^\cdot_{\cdot,kj}\|_{\infty}^2. $$
Using  item (a) of Proposition \ref{Lipschitz0} and also that $\displaystyle z^i_{n,sr}\sum_l \alpha^*_l\Big(u_n,\frac n N ,i\Big)=0$, we have
\begin{align*}
h\bigg(\Delta_i u_{n+e_{rs}},\frac{n+e_{rs}}{N},i\bigg)-h\bigg(\Delta_i u_{n},\frac{n}{N},i\bigg)&=h\bigg(\Delta_i u_{n+e_{rs}},\frac{n+e_{rs}}{N},i\bigg)-h\bigg(\Delta_i u_{n+e_{rs}},\frac{n}{N},i\bigg)\\
&+h\bigg(\Delta_i u_{n+e_{rs}},\frac{n}{N},i\bigg)-h\bigg(\Delta_i u_{n},\frac{n}{N},i\bigg)\\
& \leq \frac{C}N+ \sum_l\alpha^*_l\left(\Delta_i u_n,\frac n N ,i\right)\big(z^{l}_{n,sr}-z^{i}_{n,sr}\big).
\end{align*}
Now  denoting by $a_{n,kj,sr}^{i}=\frac{\gamma_{kj}^{n+e_{rs},i}+\gamma_{kj}^{n,i}}{2}$, and $a_{n}^{l,i}=\alpha^*_l\left(\Delta_i u_n,\frac n N ,i\right)$, we get
\begin{align*}
-\dot z^{i}_{n,sr} & \leq  \sum_{k,j}\left[ a_{n,kj,sr}^{i} \bigg(z^{i}_{n+e_{jk},sr}-z^{i}_{n,sr}\bigg)\right] + \sum_l a_{n}^{l,i} (z^{l}_{n,sr}-z^{i}_{n,sr})
+f(z),
\end{align*}
where $\displaystyle f(z)=\frac{C}{N}+C\max_{rs}\|z^\cdot_{\cdot,sr}\|_{\infty}+CN\max_{rs}\|z^\cdot_{\cdot,sr}\|_{\infty}^2$.

At this point we are in position to apply Lemma \ref{413} from the previous section. We obtain
$$z^{i}_{n,sr}(t)\leq \|z^\cdot_{\cdot,sr}(T)\|_{\infty} +\int_t^T C\max_{rs}\|z^\cdot_{\cdot,sr}(s)\|_{\infty}+CN\max_{rs}\|z^\cdot_{\cdot,sr}(s)\|_{\infty}^2+\frac {C} N  \;ds\;. $$
Finally, as $z^{i}_{n,sr}=u^i_{n+e_{rs}}-u^i_{n}$, if we set $\displaystyle w=\max_{rs}\|u^i_{n+e_{rs}}-u^i_n\|_{\infty}$ we conclude that
\[
w(t)\leq w(T)+\int_t^T Cw(s)+CN w(s)^2+\frac{C}{N} ds.
\]
Now we define
$$\eta(t) = w(T)+\int_t^T Cw(s)+CN w(s)^2+\frac{C}{N} ds.$$
We have that
\begin{equation}\label{blabla}
w(t)\leq \eta(t),
\end{equation}
 and also that $$\frac{d \eta}{dt}(t) = - g(w(t)),$$ where $g$ is the nondecreasing function $g(w)= Cw+CN w^2+\frac{C}{N}$. Thus
$$
\begin{cases}
\frac{d \eta}{dt}(t) \geq - g(\eta(t)), \\
\eta(T)=w(T).
\end{cases}
$$
A standard argument from the basic theory of differential inequalities can now be used to prove that
$\eta(t) \leq v(t)$ for $0 \leq t \leq T$, if $v(t)$
is the solution of
$$
\begin{cases}
\frac{d v}{dt}(t) = - g(v(t)), \\
v(T)=w(T).
\end{cases}
$$
This last result can be combined with Lemma \ref{tfc}, the inequality  \eqref{lbh} which means $w(T)\leq \frac{C}N$ 
 and the inequality (\ref{blabla}), to
prove that $w(t) \leq \frac{2C}N$ for all $0 \leq t \leq T$, which ends the proof of the Proposition.
\end{proof}


\subsection{Convergence}
\label{convsubsec}

In this section we  prove Theorem \ref{teoconv}, which implies the convergence of both distribution and value function of the $N+1$-player game to the mean field game, for small times.

Let $\theta_0 = (\theta_0^{1}, \theta_0^{2} \hdots, \theta_0^{d}) \in \mathcal{S}^d$  be given.
We start by assuming that  at the initial time the $N$ players distinct from the reference player are randomly assigned  states $1,2,\hdots,d$ independently according to the initial distribution $\theta_0$ (i.e. choosing state $k$ with probability $\theta_0^{k}$).
Therefore, $\bn_0$ is a random vector of $\mathbb{Z}^d$ that follows a multinomial distribution with parameters  $N$ and $\theta_0$.

We will write $\bn_t^l$ for the l-th coordinate of $\bn_t$, which means the number of players (distinct from the reference player) that are in state $l$ at time $t$.  

The norm we use for vectors of $\re^d$, in this section, is the norm  $\|v\|=\max\{|v^1|,|v^2|,...,|v^d|\}$, where $|v^i|$ is the absolute value of the $i$-th coordinate of $v$.

The main result is the following:
\begin{theorem}\label{teoconv} Let $T^*$ be as in Proposition \ref{lipbounds}. There exists a constant $C$, independent of $N$, for which, if $T<T^*$, satisfies $\rho = TC <1$,
then
$$V_N(t)+W_N(t) \leq \frac{C}{1-\rho} \frac{1}{N}\, \,,$$ for all $ t \in [0,T],$
where
$$
V_N(t) \equiv
\mathbb{E}\left[ \left\| \frac{\bn_t}{N} - \theta(t) \right\|^2  \right]\,,
$$
and
$$
W_N(t) \equiv \mathbb{E}\left[ \left\|\, u(t) - u^N_{\bn_t}\left(t\right) \right\|^2  \right]\,,
$$
where the pair $\theta(t)$ and  $u=u(t)$ is the solution of the MFG game  (\ref{PVIT}),
 and $u^{N}=u^{N}(t)$ is the value function of the $N+1$-player game, i.e., the solution of  game \eqref{eqhj}.
\end{theorem}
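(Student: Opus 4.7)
The strategy is to derive coupled Gronwall-type inequalities for $V_N$ and $W_N$, one integrated forward in time and the other backward, and close them under the smallness condition $\rho = TC < 1$. The proof relies crucially on the uniform-in-$N$ gradient bound of Proposition \ref{lipbounds}, which is available precisely because $T \leq T^*$.

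For $V_N$, apply Dynkin's formula to $\|\bn_t/N-\theta(t)\|^2$. At the Nash equilibrium, the expected drift of $\bn_t^i/N$ equals
\[
\sum_k (\bn_t^k/N)\,\alpha^*_i\bigl(\Delta_k u^N_{\bn_t+e_{ik}},(\bn_t+e_{ik})/N,k\bigr),
\]
to be compared with $\dot\theta^i = \sum_k \theta^k \alpha^*_i(\Delta_k u, \theta, k)$. The uniform Lipschitz estimates on $\alpha^*$ in Proposition \ref{Lipschitz0}(b), together with $|u^{N,i}_{n+e_{rs}}-u^{N,i}_n| \leq 2C/N$ from Proposition \ref{lipbounds}, majorize this difference pointwise by $C(\|\bn_t/N-\theta(t)\|+\|u(t)-u^N_{\bn_t}(t)\|+1/N)$. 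The quadratic-variation contribution is $O(1/N)$, since each jump of $\bn^i/N$ has size $1/N$ while the total rate is $O(N)$. Combining yields $\dot V_N(t) \leq C(V_N(t)+W_N(t))+C/N$. Because $\bn_0$ is multinomial with parameters $(N,\theta_0)$, the classical variance formula gives $V_N(0)=O(1/N)$, so integration from $0$ to $t$ produces
\[
V_N(t) \leq C/N + C\int_0^t \bigl(V_N(s)+W_N(s)\bigr)\,ds.
\]

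For $W_N$ I work backwards from $t=T$. Set $\xi(t)=u(t)-u^N_{\bn_t}(t)$ and $\eta(t)=\bn_t/N-\theta(t)$. By the Hamilton--Jacobi equation \eqref{eqhj}, the drift of $u^{N,i}_{\bn_t}$ along the chain is $-h(\Delta_i u^N_{\bn_t},\bn_t/N,i)$, so the drift of $\xi^i$ equals $-h(\Delta_i u,\theta,i)+h(\Delta_i u^N_{\bn_t},\bn_t/N,i)$, which by the (local) Lipschitz continuity of $h$ (Proposition \ref{Lipschitz0}(c)) combined with Proposition \ref{lipbounds} is bounded by $C(\|\xi(t)\|+\|\eta(t)\|)$. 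The quadratic variation of $u^{N,i}_{\bn_t}$ is again $O(1/N)$ by Proposition \ref{lipbounds}. Applying Dynkin to $(\xi^i)^2$, integrating from $t$ to $T$, and using the Lipschitz terminal condition $W_N(T)\leq L^2 V_N(T)$ yields
\[
W_N(t) \leq L^2 V_N(T) + C\int_t^T \bigl(V_N(s)+W_N(s)+1/N\bigr)\,ds.
\]

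Setting $G(t)=V_N(t)+W_N(t)$, the forward bound at $t=T$ gives $V_N(T)\leq C/N + C\int_0^T G(s)\,ds$; adding the two inequalities produces $G(t)\leq C_0/N + C_1 T \sup_{[0,T]} G$. Taking the supremum, if $\rho=C_1 T<1$, then $\sup G \leq C/(N(1-\rho))$, which is the claimed estimate. The main obstacle is the drift estimate of step one: one must account for the shift $n\to n+e_{ik}$ inside $\alpha^*$, producing an $O(1/N)$ error via Proposition \ref{Lipschitz0}(b), and convert $\|\Delta_k u^N_{\bn_t+e_{ik}}-\Delta_k u\|$ into $\|\xi\|+O(1/N)$ using Proposition \ref{lipbounds}. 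The same Proposition provides the uniform-in-$N$ gradient bound that makes both quadratic variations $O(1/N)$; without it the martingale fluctuations would be of order one and the whole scheme would collapse.
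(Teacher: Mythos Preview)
Your proposal is correct and follows essentially the same route as the paper: a forward Gronwall inequality for $V_N$ (Dynkin applied to $(\bn^l_t/N-\theta^l)^2$, drift controlled via the Lipschitz bounds of Proposition~\ref{Lipschitz0}(b) and the gradient estimate of Proposition~\ref{lipbounds}), a backward Gronwall inequality for $W_N$ (Dynkin applied to $(u^{N,l}_{\bn_t}-u^l)^2$, using Proposition~\ref{Lipschitz0}(c) and again Proposition~\ref{lipbounds} for the quadratic variation), and closure by taking the supremum under $\rho=TC<1$.

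One small remark. Your claim that ``the drift of $u^{N,i}_{\bn_t}$ along the chain is $-h(\Delta_i u^N_{\bn_t},\bn_t/N,i)$'' is only true up to an $O(1/N)$ error: the ODE \eqref{eqhj} for $u^{N,i}_n$ involves $\gamma^{n,i}_{kj}$, whereas the generator of $\bn_t$ involves $\gamma^{\bn_t,\bi_t}_{kj}$, and $i$ is a fixed coordinate index while $\bi_t$ is the reference player's random state. The mismatch $\gamma^{n,i}_{kj}-\gamma^{n,\bi_t}_{kj}$ is $O(1)$, but after multiplication by $u^{N,i}_{n+e_{jk}}-u^{N,i}_n=O(1/N)$ (Proposition~\ref{lipbounds}) it contributes only another $O(1/N)$ term, which is harmless. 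The paper glosses over the same point. Your handling of the terminal condition, writing $W_N(T)\le L^2 V_N(T)$ and feeding in the forward bound for $V_N(T)$, is in fact cleaner than the paper's assertion that $W_N(T)\le K_3/N$ directly.
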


Before proving Theorem \ref{teoconv} we need two Lemmas. Let
\begin{equation}\label{VNQN}
\begin{cases}
 V_N(l,t) \equiv \mathbb{E}\left[ \left( \frac{\bn^l_t}{N} - \theta^l(t) \right)^2  \right]\,, \\
   \\
 W_N(l,t) \equiv \mathbb{E}\left[ \left( u^l(t) - u^{N,l}_{\bn_t}(t) \right)^2  \right]\,. 
\end{cases}
\end{equation}
We have
\begin{equation}\label{vnt}
 V_N(t)= \max\{ V_N(1,t),\hdots, V_N(d,t)\} \hspace{.2cm} \mbox{and} \hspace{.2cm}  W_N(t)= \max\{ W_N(1,t),\hdots, W_N(d,t)\},
 \end{equation}
and
\begin{equation}\label{var}
    V_N(l,0)= \mbox{Var}\left[ \frac{\bn^l_0}{N}\right] = \frac{ \theta_0^{l}(1- \theta_0^{l})}{N}\,,
\end{equation}
 because $\bn^l_0$ is the sum of $N$ independent and identically distributed random variables, each of them having Bernoulli distribution with parameter $\theta_0^{l}$.

An important tool for proving Lemmas \ref{conv1} and \ref{conv2} is again the Dynkin Formula, now adapted to the present situation: define the infinitesimal generator of the process $(\bi,\bn)$ acting on a function   $\varphi:I_d \times \mathcal{S}^d_N \times [0,+\infty) \rar \Rr$, $C^1$ in the last variable,      by
\begin{align}
\label{generator-conv}  A^{\alpha}\varphi(i,n,s) = &\sum_j \alpha_{ij}^{N,i}[\varphi(j,n,s)-\varphi(i,n,s) ]+  
\sum_{kj}  n^k \alpha^{N,i}_{kj}  [ \varphi(i,{n+e_{jk}},s)-\varphi(i,n,s)   ], 
 \end{align}
where
\begin{equation}\label{controlN}
    \alpha^{N,i}_{kj}= \alpha^*_j\left(\Delta_k u^N_{n+e_{ik}},\frac{n+e_{ik}}N,k\right),
\end{equation}
is the transition rate from state $k$ to state $j$ in for equilibrium solutions of the $N+1$-player game, as in section  \ref{eqsols}. 


Then for any $t<T$,
\begin{equation}\label{Dynkin-conv}  \mathbb{E} \left[\varphi({\bi_T},{\bn_T},T)-\varphi({\bi_t},{\bn_t},t)\right]
= \mathbb{E} \left[ \int_t^T
\frac{d\varphi}{dt} (\bi,\bn,s) + A^{\alpha}\varphi({\bi},{\bn},s) ds  \right]\,.
 \end{equation}
Note that in the right hand side of the equation above, the processes $\bi$ and $\bn$ are evaluated at time $s$.

We will also denote by
 $$
 \alpha_{ij}= \alpha^*_j(\Delta_i u, \theta ,i)
 $$
 the transition rate from state $i$ to state $j$ in the equilibrium solutions of the mean field game as in section \ref{nash_eq}.

\begin{lemma}\label{conv1} Let $T^*$ be as in Proposition \ref{lipbounds}, and suppose $T<T^*$.
There exists $C_1>0$ such that
$$V_N(t) \leq \int_0^t C_1 (V_N(s)+ W_N(s)) ds + \frac{C_1}N.$$
\end{lemma}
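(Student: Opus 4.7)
The plan is to differentiate $V_N(l,t)$ via Dynkin's formula and compare the drift of the empirical measure $\bn/N$ to the mean field drift $\dot\theta$, using the $O(1/N)$ gradient estimate from Proposition \ref{lipbounds} to control the discrepancy.

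First I would apply the Dynkin formula \eqref{Dynkin-conv} to the test function
\[
\varphi(i,n,s)=\bigl(n^l/N-\theta^l(s)\bigr)^2,
\]
obtaining
\[
\frac{d}{dt}V_N(l,t)=\mathbb{E}\bigl[\partial_s\varphi(\bi_t,\bn_t,t)+A^\alpha\varphi(\bi_t,\bn_t,t)\bigr].
\]
Because $\varphi$ does not depend on the reference player's state $i$, the first sum in \eqref{generator-conv} vanishes, and only the transitions of the $N$ background players contribute. Expanding $\varphi(i,n+e_{jk},s)-\varphi(i,n,s)$ for $(e_{jk})^l\in\{-1,0,1\}$, the contribution of the background transitions splits into a linear drift term and an $O(1/N)$ quadratic correction:
\[
A^\alpha\varphi=\frac{2(n^l/N-\theta^l)}{N}\sum_k n^k\alpha^{N,i}_{kl}+R_N,\qquad |R_N|\le C/N,
\]
where the $O(1/N)$ bound on $R_N$ follows from $n^k\le N$ and boundedness of $\alpha^*$ (a consequence of Proposition \ref{maxpri} combined with Proposition \ref{Lipschitz0}). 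Combining with $\partial_s\varphi=-2(n^l/N-\theta^l)\dot\theta^l$ and $\dot\theta^l=\sum_k\theta^k\alpha^*_l(\Delta_k u,\theta,k)$ yields
\[
\frac{d}{dt}V_N(l,t)=2\,\mathbb{E}\!\left[(\bn^l/N-\theta^l)\Bigl(\textstyle\sum_k(\bn^k/N)\alpha^{N,\bi}_{kl}-\sum_k\theta^k\alpha_{kl}\Bigr)\right]+O(1/N).
\]

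The main technical step is to control the bracketed drift difference. I would add and subtract intermediate rates to split it into three pieces: (i) the shift from $(\bn+e_{\bi k})/N$ and $u^N_{\bn+e_{\bi k}}$ to $\bn/N$ and $u^N_{\bn}$, which is $O(1/N)$ by the Lipschitz estimate in Proposition \ref{Lipschitz0}(b) together with Proposition \ref{lipbounds} giving $\|u^N_{\bn+e_{\bi k}}-u^N_{\bn}\|_\infty\le 2C/N$; (ii) the change of parameters from $(\Delta_k u^N_{\bn},\bn/N)$ to $(\Delta_k u,\theta)$, which by Proposition \ref{Lipschitz0}(b) is bounded by $C(\|u^N_{\bn}-u\|+\|\bn/N-\theta\|)$; and (iii) the weight difference $\sum_k(\bn^k/N-\theta^k)\alpha_{kl}$, bounded by $C\|\bn/N-\theta\|$ since $\alpha^*$ is bounded. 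Altogether,
\[
\left|\textstyle\sum_k(\bn^k/N)\alpha^{N,\bi}_{kl}-\sum_k\theta^k\alpha_{kl}\right|\le \frac{C}{N}+C\|\bn/N-\theta\|+C\|u^N_{\bn}-u\|.
\]

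To finish, I would apply Young's inequality $2ab\le a^2+b^2$ to each of the three terms in the right-hand side against the factor $(\bn^l/N-\theta^l)$, take expectations, and use \eqref{vnt} to bound the resulting expectations by $V_N(t)$ and $W_N(t)$:
\[
\frac{d}{dt}V_N(l,t)\le C\bigl(V_N(t)+W_N(t)\bigr)+\frac{C}{N}.
\]
Integrating from $0$ to $t\le T\le T^*$ and using the binomial variance computation \eqref{var}, $V_N(l,0)\le 1/(4N)$, yields
\[
V_N(l,t)\le \frac{C_1}{N}+\int_0^t C_1\bigl(V_N(s)+W_N(s)\bigr)\,ds,
\]
and taking the maximum over $l$ gives the lemma.

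The main obstacle is keeping track of the three shifts (in the number-of-players argument, in the $u$-argument, and in the $\theta$-argument of $\alpha^*$) simultaneously and showing that each one is absorbed into either the $1/N$ gradient bound or into $V_N$ or $W_N$; the gradient estimate of Proposition \ref{lipbounds} is essential precisely because it guarantees that the discrete shift $n\mapsto n+e_{\bi k}$ only perturbs the transition rates by $O(1/N)$, which then matches the inverse size of the population and does not accumulate.
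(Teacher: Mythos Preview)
Your proposal is correct and follows essentially the same route as the paper: apply Dynkin's formula to $\varphi(i,n,s)=(n^l/N-\theta^l(s))^2$, isolate the drift difference $\sum_k(\bn^k/N)\alpha^{N,\bi}_{kl}-\sum_k\theta^k\alpha_{kl}$, and control it via the Lipschitz continuity of $\alpha^*$ together with the gradient estimate of Proposition~\ref{lipbounds}, then finish with Young's inequality and the binomial variance at $t=0$. The only cosmetic difference is that the paper works directly in integral form and groups your pieces (i) and (ii) into a single estimate of $|\alpha^{N,i}_{kl}-\alpha_{kl}|$, whereas you first pass to a differential identity; both lead to the same bound after integration.
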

\begin{proof}
Using Dynkin's Formula (\ref{Dynkin-conv}) with $\varphi_l(i,n,s)=\left(\frac{n^l}N-\theta^l(s)\right)^2$, and \eqref{var}, we have
$$ V_N(l,t) -  \frac{\theta_0^{l}(1-\theta_0^{l})}{N} =
 \mathbb{E} \int_0^t \bigg(\omega_{N,l}(s) + \varsigma_{N,l}(s)\bigg)ds\,,$$
where
 $$
 \varsigma_{N,l}(s) = \frac{d \varphi_l}{dt}(i,\bn,s)=
 - 2  \left( \frac{\bn^l}{N}-\theta^l \right)
\sum_k \alpha_{kl}\theta^k,
$$
and
$$
\omega_{N,l}(s)=\sum_{k}\sum_j \bn^k \alpha^{N,i}_{kj}\bigg[\varphi_l(\bn+e_{jk},s)-\varphi_l(\bn,s)  \bigg].
$$
Note that  $\varphi_l(i,\bn,s)$ just depend on $\bn^l$ and $s$.
Therefore $ \varphi_l(\bn+e_{jk},s)=\varphi_l(\bn,s)   $ if both $j\neq l $ and $k\neq l$.
Hence
\begin{align*}\omega_{N,l}(s)&=\sum_{k\in I_d, j=l} \bn^k \alpha^{N,i}_{kl}\bigg[\varphi_l(\bn+e_{lk},s)-\varphi_l(\bn,s)  \bigg]+\sum_{j\in I_d, k=l} \bn^l \alpha^{N,i}_{lj}\bigg[\varphi_l(\bn+e_{jl},s)-\varphi_l(\bn,s)  \bigg]\\
&=\sum_{k\neq l} \bn^k \alpha^{N,i}_{kl}\bigg[\bigg( \frac{\bn^l+1}{N}-\theta^l\bigg)^2  -\bigg( \frac{\bn^l}{N}-\theta^l  \bigg)^2\bigg]\\
&+\sum_{j\neq l} \bn^l \alpha^{N,i}_{lj}\bigg[\bigg( \frac{\bn^l-1}{N}-\theta^l\bigg)^2  -\bigg( \frac{\bn^l}{N}-\theta^l  \bigg)^2\bigg]\\
&=\bigg(2\bigg(\frac{\bn^l}{N}-{\theta^l}\bigg)+\frac 1 N   \bigg) \sum_{k\neq l} \frac{\bn^k}N \alpha^{N,i}_{kl}+\bigg(2\bigg(-\frac{\bn^l}{N}+{\theta^l}\bigg)+\frac 1 N   \bigg) \frac{\bn^l}N\sum_{j\neq l}  \alpha^{N,i}_{lj}\\
&=\bigg(2\bigg(\frac{\bn^l}{N}-{\theta^l}\bigg)+\frac 1 N   \bigg) \sum_{k\neq l} \frac{\bn^k}N \alpha^{N,i}_{kl}+\bigg(2\bigg(\frac{\bn^l}{N}-{\theta^l}\bigg)-\frac 1 N   \bigg)  \frac{\bn^l}N \alpha^{N,i}_{ll}\\
&\leq  2\bigg(\frac{\bn^l}{N}-{\theta^l}  \bigg) \sum_{k\in I_d} \frac{\bn^k}N \alpha^{N,i}_{kl}+ \frac{\tilde C}{N},
\end{align*}
where in the inequality above we used the fact that $\alpha^{N,i}_{kl}$ is bounded (with bounds that do not depend on $N$ - here we are using that $\alpha^*$ is Lipschitz and $\Delta_i u_n$ is bounded - see Propositions \ref{Lipschitz0} and \ref{lipbounds}).
Now
\begin{align*}
 \varsigma_{N,l}(s)+\omega_{N,l}(s) \leq&  2  \left( \frac{\bn^l}{N}-\theta^l \right)
\sum_k \bigg[ \frac{\bn^k}N \alpha^{N,i}_{kl}-\theta^k\alpha_{kl}\bigg]+ \frac{\tilde C}{N}\\
=&  2  \left( \frac{\bn^l}{N}-\theta^l \right)
\sum_k \bigg[ \frac{\bn^k}N \alpha^{N,i}_{kl} -\frac{\bn^k}N  \alpha_{kl} + \frac{\bn^k}N  \alpha_{kl} -\theta^k\alpha_{kl}\bigg]+ \frac{\tilde C}{N}\\
=&  2  \left( \frac{\bn^l}{N}-\theta^l \right)
\sum_k \bigg[\frac{\bn^k}N\bigg(\alpha^{N,i}_{kl} -\alpha_{kl}\bigg) + \alpha_{kl}\bigg(\frac{\bn^k}N-\theta^k \bigg) \bigg]+ \frac{\tilde C}{N}\,.
\end{align*}

Then

\begin{align*} V_N(l,t) =&
 \mathbb{E} \int_0^t \bigg(\omega_{N,l}(s) + \varsigma_{N,l}(s)\bigg)ds
 +\frac{\theta_0^{l}(1-\theta_0^{l})}{N} \\
\leq &
  \mathbb{E} \int_0^t 2  \left( \frac{\bn^l}{N}-\theta^l \right)
\sum_k \frac{\bn^k}N\bigg(\alpha^{N,i}_{kl} -\alpha_{kl}\bigg) ds\\
+&
\mathbb{E} \int_0^t 2  \left( \frac{\bn^l}{N}-\theta^l \right)    \sum_k \alpha_{kl}\bigg(\frac{\bn^k}N-\theta^k \bigg)   ds+\frac{\tilde  C T+1/4}{N}\,.
\end{align*}

Now, using again the fact that $\alpha^*$ is Lipschitz and
\begin{equation}\label{diffopeiscontract}
    \|\Delta_k w - \Delta_k z \| \leq  \|w-z\|\,,
\end{equation}
we see that 

\begin{align*}|\alpha^{N,i}_{kl} -\alpha_{kl}|&=
\left|\alpha_l^*\left(\Delta_k u^{N}_{\bn+e_{ik}},\frac{\bn+e_{ik}}N,k\right)- \alpha_l^*(\Delta_k u, \theta ,k)\right| \\
&\leq K\bigg( \left\|\theta-\frac{\bn+e_{ik}}N\right\|+\|u^N_{\bn+e_{ik}}- u\| \bigg)\\
&\leq K\bigg( \left\|\theta-\frac{\bn}N\right\|+\frac 2 N+\|u^N_{\bn+e_{ik}}- u^N_{\bn}\|+\|u^N_{\bn}- u\| \bigg)\\
&\leq
K\bigg( \left\|\theta-\frac{\bn}N\right\|+\frac {2+2C_0} N +\|u^N_{\bn}- u\| \bigg)\,,
\end{align*}
where in the last equality we used the gradient estimates of Proposition \ref{lipbounds}.

Therefore
\begin{align*} V_N(l,t) \leq &
  2K \;\mathbb{E} \int_0^t   \left| \frac{\bn^l}{N}-\theta^l \right|
\bigg( \left\|\theta-\frac{\bn}N\right\|+\frac {2+2C_0} N +\|u^N_{\bn}- u\| \bigg) ds+\\
+ & \mathbb{E} \int_0^t 2  \left( \frac{\bn^l}{N}-\theta^l \right)  \sum_k \alpha_{kl}\bigg(\frac{\bn^k}N-\theta^k \bigg)  ds + \frac {\tilde C T+1/4} N   \\
\leq &
2K \;\mathbb{E} \int_0^t   \left| \frac{\bn^l}{N}-\theta^l \right|
\bigg( \left\|\theta-\frac{\bn}N\right\| +\|u^N_{\bn}- u\| \bigg) ds+\\
+ & C_3 \mathbb{E} \int_0^t 2  \left| \frac{\bn^l}{N}-\theta^l \right|   \sum_k \bigg|\frac{\bn^k}N-\theta^k \bigg|   ds + \frac {C_4} N   \,
\end{align*}
where we used the fact that $\alpha_{kl}$ is bounded by a constant $C_3$,
and $C_4=\tilde C T +1/4 +2T+2C_0T$.

Now
\begin{align*} V_N(l,t) \leq &
2K \;\mathbb{E} \int_0^t
\bigg( \left\|\theta-\frac{\bn}N\right\|^2 + \|u^N_{\bn}- u\|  \left\|\theta-\frac{\bn}N\right\|\bigg) ds+\\
+ & 2 d C_3 \mathbb{E} \int_0^t \left\|\theta-\frac{\bn}N\right\|^2  ds + \frac {C_4} N   \,.
\end{align*}
Finally using $ab < a^2 + b^2$ and \eqref{vnt}, we have
$$
V_N(t) \leq \int_0^t C_1 (V_N(s)+ W_N(s)) ds + \frac{C_1}N\,,
$$
where $C_1= 3 \max\{2  K, 2 d C_3, C_4\}$.

\end{proof}
\vspace{.3cm}

\begin{lemma}\label{conv2} Let $T^*$ be as in Proposition \ref{lipbounds}, and suppose $T<T^*$. There exists $C_2>0$ such that
$$W_N(t) \leq \int_t^T C_2 (V_N(s)+ W_N(s)) ds + \frac{C_2}N. $$
\end{lemma}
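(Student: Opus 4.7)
The plan is to apply Dynkin's formula \eqref{Dynkin-conv} backward in time from $T$ to $t$, mirroring the proof of Lemma \ref{conv1} but now exploiting the terminal data rather than the initial data. I would take as test function $\varphi_l(i,n,s) = (u^l(s) - u^{N,l}_n(s))^2$; since $\varphi_l$ does not depend on the reference player's state $i$, the generator $A^\alpha\varphi_l$ reduces to the many-player sum in \eqref{generator-conv}. The terminal contribution $\mathbb{E}[\varphi_l(\bi_T,\bn_T,T)] = \mathbb{E}[(\psi^l(\theta(T)) - \psi^l(\bn_T/N))^2]$ is bounded by $K^2 V_N(T)$ using Lipschitz continuity of $\psi$.

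Write $D^l_n = u^l - u^{N,l}_n$ and expand
$$(D^l_{n+e_{jk}})^2 - (D^l_n)^2 = -2 D^l_n\bigl(u^{N,l}_{n+e_{jk}} - u^{N,l}_n\bigr) + \bigl(u^{N,l}_{n+e_{jk}} - u^{N,l}_n\bigr)^2.$$
By the gradient bound of Proposition \ref{lipbounds} the quadratic remainder is $O(1/N^2)$ per term, and after weighting by $n^k \alpha^{N,i}_{kj}$ (with $\sum_k n^k \leq N$) it contributes $O(1/N)$. Combined with $\partial_s\varphi_l = 2 D^l_n \dot{D}^l_n$ computed from the Hamilton--Jacobi ODE for $u$ and the ODE \eqref{eqhj} for $u^N_n$, the transport pieces almost cancel; the residual compares $\gamma^{n,l}_{kj}$ (appearing in $\dot u^{N,l}_n$) with $n^k\alpha^{N,i}_{kj}$ (appearing in $A^\alpha$), which differ only through the shift $e_{lk} \leftrightarrow e_{ik}$ inside $\alpha^*$. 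Lipschitz continuity of $\alpha^*$ (Proposition \ref{Lipschitz0}) together with Proposition \ref{lipbounds} bounds the residual per summand by $O(n^k/N)$, and after pairing with $|D^l_n|$ and AM--GM its total contribution is $O(1/N) + C W_N(s)$.

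The surviving main term is $-2 D^l_n[h(\Delta_l u,\theta,l) - h(\Delta_l u^N_n,n/N,l)]$. Local Lipschitz continuity of $h$ together with the elementary inequality $\|\Delta_l u - \Delta_l u^N_n\| \leq 2\|u - u^N_n\|$ and AM--GM yield, in expectation, a bound of the form $C(V_N(s) + W_N(s))$. Collecting everything and integrating from $t$ to $T$,
$$W_N(l,t) \leq K^2 V_N(T) + \int_t^T C\bigl(V_N(s) + W_N(s)\bigr) ds + \frac{C(T-t)}{N},$$
and taking a maximum over $l$ via \eqref{vnt} gives the same bound on $W_N(t)$.

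The main obstacle is absorbing the boundary term $K^2 V_N(T)$ into the stated integrand, since unlike the $O(1/N)$ pieces it is not a priori small. My plan is to rerun the Dynkin argument of Lemma \ref{conv1} between $s$ and $T$ (rather than $0$ and $t$), which produces $V_N(T) \leq V_N(s) + \int_s^T C_1(V_N + W_N)\,du + C_1/N$; averaging this in $s \in [t,T]$ yields
$$V_N(T) \leq \frac{1}{T-t}\int_t^T V_N(s)\,ds + \int_t^T C_1(V_N+W_N)\,du + \frac{C_1}{N},$$
so on any subinterval with $T - t$ bounded below this absorbs $V_N(T)$ into an integrand of the stated form. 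The remaining regime where $T - t$ is very small is handled directly: Proposition \ref{lipbounds} and Lipschitz continuity of $\psi$ reduce $W_N(l,t)$ to $K^2 V_N(T) + O((T-t)/N)$, which can again be bounded in terms of $V_N$ at nearby times. Enlarging $C_2$ to dominate all resulting constants produces the stated inequality.
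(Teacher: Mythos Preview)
Your core argument---Dynkin's formula applied to $\varphi_l(i,n,s)=(u^l(s)-u^{N,l}_n(s))^2$, the quadratic identity
\[
-2D^l_n\bigl(u^{N,l}_{n+e_{jk}}-u^{N,l}_n\bigr)+(D^l_{n+e_{jk}})^2-(D^l_n)^2=\bigl(u^{N,l}_{n+e_{jk}}-u^{N,l}_n\bigr)^2,
\]
the gradient bound from Proposition~\ref{lipbounds} to make this $O(1/N)$, and the Lipschitz estimate on $h$ for the remaining term---is exactly the paper's proof. You are in fact \emph{more} careful than the paper in one place: the time derivative of $u^{N,l}_n$ carries the weights $\gamma^{n,l}_{kj}=n^k\alpha^*_j(\Delta_k u^N_{n+e_{lk}},\ldots)$, whereas the generator carries $n^k\alpha^{N,i}_{kj}=n^k\alpha^*_j(\Delta_k u^N_{n+e_{ik}},\ldots)$. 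The paper silently identifies these; you correctly isolate the residual and bound it via Propositions~\ref{Lipschitz0} and~\ref{lipbounds}.

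Where the two arguments diverge is the terminal term. The paper simply asserts $W_N(T)\le K_3/N$ ``for the same reason'' (the gradient estimates). This is not justified: the gradient estimates control $u^N_{n+e_{rs}}-u^N_n$, not $\psi^l(\theta(T))-\psi^l(\bn_T/N)$; the latter gives only $W_N(T)\le K^2\,\mathbb{E}\|\theta(T)-\bn_T/N\|^2$, which is essentially $V_N(T)$, as you write. So you have spotted a genuine gap in the paper's argument.

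Your proposed repair, however, is more elaborate than necessary and does not close cleanly: the averaging step leaves a coefficient $K^2/(T-t)$ in front of $\int_t^T V_N$, which is unbounded as $t\uparrow T$, and the ``direct'' treatment of that regime is not spelled out. The clean fix is one line: invoke Lemma~\ref{conv1} at $t=T$ to get
\[
V_N(T)\le \int_0^T C_1\bigl(V_N(s)+W_N(s)\bigr)\,ds+\frac{C_1}{N},
\]
and substitute. This yields $W_N(t)\le C_2'\int_0^T(V_N+W_N)\,ds+C_2'/N$, with the integral over $[0,T]$ rather than $[t,T]$. Since the sole consumer of the lemma is Theorem~\ref{teoconv}, where both integrals are immediately bounded by $T\max_s(V_N+W_N)$, this discrepancy is harmless. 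Alternatively, simply record the extra term $K^2 V_N(T)$ in the conclusion and absorb it at that stage.
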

\begin{proof}

Using Dynkin formula
(\ref{Dynkin-conv}) with $\varphi_l(i,n,s)=\Big(u^{N,l}_{n}(s)-u^l(s)\Big)^2$, and equations
(\ref{eqhj}) and (\ref{PVIT}),
 we have
\begin{align*}
&W_N(l,t)-W_N(l,T)=-\mathbb{E}\bigg[\Big(u^{N,l}_{\bn}(t)-u^l(t)\Big)^2\bigg]+\mathbb{E}\bigg[\Big(u^{N,l}_{\bn}(T)-u^l(T)\Big)^2\bigg]\\
=& \mathbb{E} \int_t^T 2 (u^{N,l}_{\bn}-u^l) \frac{d}{ds} (u^{N,l}_{\bn}-u^l) ds  +\mathbb{E} \int_t^T\sum_{jk} \bn^k \alpha^{N,i}_{kj} \bigg[\varphi_l(i,\bn+e_{jk},s)-\varphi_l(i,\bn,s)\bigg]
ds \\
=& \mathbb{E} \int_t^T 2 (u^{N,l}_{\bn}-u^l) \left( -\sum_{jk} \bn^k \alpha^{N,i}_{kj} \Big(u^{N,l}_{\bn+e_{jk}}-u^{N,l}_{\bn}\Big)-h\Big(\Delta_l u_{\bn}^N,\frac{\bn}N,l\Big) +h (\Delta_l u,\theta,l)   \right) ds\\
&\qquad + \mathbb{E} \int_t^T\sum_{jk} \bn^k \alpha^{N,i}_{kj} \bigg[\Big(u^{N,l}_{\bn+e_{jk}}-u^l\Big)^2-\Big(u^{N,l}_{\bn}-u^l\Big)^2\bigg]
ds\\
=&\mathbb{E} \int_t^T \sum_{jk} \bn^k \alpha^{N,i}_{kj} \bigg[ -2 \Big(u^{N,l}_{\bn}-u^l\Big)   \Big(u^{N,l}_{\bn+e_{jk}}-u^{N,l}_{\bn}\Big)+ \Big(u^{N,l}_{\bn+e_{jk}}-u^l\Big)^2-\Big(u^{N,l}_{\bn}-u^l\Big)^2\bigg] ds\\
&\qquad + \mathbb{E} \int_t^T \Big(2 (u^{N,l}_{\bn}-u^l) \Big) \Big(h (\Delta_l u,\theta,l)-h\Big(\Delta_l u^N_{\bn},\frac{\bn}N,l\Big) \Big)
ds\\
=&\mathbb{E} \int_t^T \sum_{jk} \bn^k \alpha^{N,i}_{kj} \Big(u^{N,l}_{\bn+e_{jk}}-u^{N,l}_{\bn}   \Big)^2   ds\\
&\qquad + \mathbb{E} \int_t^T \Big(2 (u^{N,l}_{\bn}-u^l) \Big) \Big(h (\Delta_l u,\theta,l)-h\Big(\Delta_l u^N_{\bn},\frac{\bn}N,l\Big) \Big)
ds.
\end{align*}
In the last equation we used the fact that
$$  -2 \Big(u^{N,l}_{\bn}-u^l\Big)   \Big(u^{N,l}_{\bn+e_{jk}}-u^{N,l}_{\bn}\Big)+ \Big(u^{N,l}_{\bn+e_{jk}}-u^l\Big)^2-\Big(u^{N,l}_{\bn}-u^l\Big)^2=\Big(u^{N,l}_{\bn+e_{jk}}-u^{N,l}_{\bn}   \Big)^2.  $$

Now, using the gradient estimates  from \S \ref{uest}, Proposition \ref{lipbounds}, we have that
$$\alpha^{N,i}_{kj} \Big(u^{N,l}_{\bn+e_{jk}}-u^{N,l}_{\bn}   \Big)^2  < \frac{K_2}{N^2},$$ which implies $$\sum_{jk} \bn^k \alpha^{N,i}_{kj} \Big(u^{N,l}_{\bn+e_{jk}}-u^{N,l}_{\bn}   \Big)^2   < \frac{d K_2}N\,. $$
For the same reason we have that $W_N(T)$ is bounded by $\frac{K_3}N$,
which implies
$$W_N(t) \leq \frac{K_4}N +
2 \mathbb{E}\int_t^T
\left(h (\Delta_l u,\theta,l)-h\big(\Delta_l u^N_{\bn},\frac{\bn}N,l\big)\right)\Big(u^{N,l}_{\bn}-u^l\Big)ds\,.$$
Using the fact that $h$ is Lipschitz  in both variables, with Lipschitz constant uniform (since $\Delta u$ is bounded) 
and \eqref{diffopeiscontract}
we see that 
\begin{align*}
 h(\Delta_l u, \theta ,l)-h\left(\Delta_l u^N_{\bn},\frac{\bn}N,l\right)&
< K\bigg( \left\|\theta-\frac{\bn}N\right\|+\|u_{\bn}^N- u\| \bigg)\,.
\end{align*}

Therefore, using $u^{N,l}_{\bn}(s)-u^l(s)\leq \|u^N_{\bn}- u\|$ and again $ab < a^2+b^2$, we have
$$ W_N(t) \leq \frac{K_4}N  + K_5 \int_t^T
 V_N(s) + W_N(s)  ds\,,$$
 which ends the proof.
\end{proof}

Now we can  prove our main result that establishes the convergence
of the $N+1$-player game to the mean field model as $N\to \infty$.

\bigskip

{\bf Proof of Theorem \ref{teoconv}:}

Define $C=C_1+C_2$.
Adding both inequalities given in the two last Lemmas, we have
$$W_N(t)+V_N(t) \leq C \int_0^T (V_N(s)+W_N(s))ds + \frac C N\,.$$
Now suppose $\rho = TC <1$.
Defining
\[
W_N+V_N = \max_{0 \leq t \leq T}W_N(t)+V_N(t),
\]
we have
$$W_N+V_N \leq \rho (W_N+V_N) + \frac C N, $$
which proves the Theorem \ref{teoconv}.
\hfill \cqd

\section{Potential mean field games}\label{lastsection}

An important class of examples  are potential mean field games, which have additional structures that can be used to deduct further properties. In these  mean field games  $h$ has the form


\begin{equation}\label{separatedh}
    h(z,\theta,i)=\tilde h(z,i)+f^i(\theta)
\end{equation}
where $\tilde h:\re^d\times I_d \rar \re$ and $f: \re^d\times I_d \rar \re$ is the gradient of a convex function.
More precisely, we suppose that there exists a convex function $F:\re^d \rar \re$ such that $\nabla_{\theta}F=f(\cdot,\theta)$.

\subsection{Hamiltonian and Lagrangian formulations}

Let $H:\re^{2d}\rar \re$ be given by
\begin{align}\label{hamilt}
    H(u,\theta)&=\sum_i \theta^i \tilde h(\Delta_i u,i) + F(\theta)
\\  & = \theta \cdot \tilde h(\Delta_{\cdot}u,\cdot)+F(\theta) \,.   \nonumber
\end{align}
A direct computation 
shows that \eqref{PVIT} can be written as
\begin{equation}\label{Hamiltequat}
    \left\{
      \begin{array}{l}
        \frac{\partial H}{\partial u^j} =  \dot \theta^j\,,\\
        \\
        \frac{\partial H}{\partial \theta^j} = - \dot u^j.
      \end{array}
    \right.
\end{equation}
This means the flow generated by  equation \eqref{PVIT} is Hamiltonian.
In addition to the fact that  the Hamiltonian is preserved by the flow \eqref{Hamiltequat},
the special structure of the $H$, which  depends only on $\Delta_iu$, implies that $\sum_i \theta^i$ is also a conserved quantity,
which is consistent with the interpretation of $\theta$ in terms of probability distribution of players.


Given a convex function $G(p)$ we define the Legendre transform
as
\[
G^*(q)=\sup_p -q\cdot p-G(p).
\]
If $G$ is strictly convex and the previous supremum is achieved, then $q=-\nabla G(p)$, or equivalently $p=-\nabla G^*(q)$.

If the function $F$ is strictly convex in $\theta$ then the Hamiltonian $H$ is strictly convex in $\theta$.
This allow us to consider the Legendre transform
\begin{align*}
L(u, \dot u)&=\sup_{\theta} -\dot u \cdot \theta-H(u, \theta)\\
&=\sup_{\theta} -(\dot u+\tilde h)\cdot  \theta-F(\theta)=F^*(\dot u+\tilde h(\Delta_{\cdot}u,\cdot)).
\end{align*}
From this we conclude that any solution to \eqref{PVIT} is a critical point of the functional
\begin{equation}\label{cpf}
\int_0^T F^*(\dot u+\tilde h(\Delta_\cdot u,\cdot))ds.
\end{equation}
This variational problem has to be complemented by suitable boundary conditions. The initial-terminal value problem
corresponds to
\begin{align*}
\theta_0&=-\nabla F^*(\dot u(0)+\tilde h(\Delta_\cdot u(0), \cdot)),\\
u(T)&=\psi(\cdot, -\nabla F^*(\dot u(T)+\tilde h(\Delta_\cdot u(T), \cdot))).
\end{align*}
Another important boundary condition arises in planning problems. In this case the objective is to find a
terminal cost $u(T)$  which steers a initial probability distribution $\theta_0$ into a terminal probability distribution $\theta^T$.
Hence we have the following
\begin{align*}
\theta_0&=-\nabla F^*(\dot u(0)+\tilde h(\Delta_\cdot u(0), \cdot)),\\
\theta^T&=-\nabla F^*(\dot u(T)+\tilde h(\Delta_\cdot u(T), \cdot)).
\end{align*}

The variational principle \eqref{cpf} is an analog to the results in \cite{GPSM, GMorgado}.

\subsection{Two PDE's for the value function}

We will present now a PDE for the value function.
As pointed out by Lions in his course in College de France, as well as in \cite{Gueant1,Gueant2}
the value function of the mean field game can be determined by solving a PDE. For this let
 $g:\re^d\times \mathcal{S}^d \times I_d \to \re^d$ be
$$ g(u,\theta,i)=\sum_j \theta^j \alpha^*_i(\Delta_ju,\theta,j).$$
The first equation of \eqref{PVIT} is equivalent to $\frac{d}{dt}\theta^i=g(u,\theta,i)$.

Consider the PDE
\begin{equation}\label{PDE}
     -\frac{\partial U^i}{\partial t}(\theta,t) = h(U,\theta,i)+ \sum_k g(U,\theta,k) \frac{\partial U^i}{\partial \theta^k}(\theta,t) \,,
\end{equation}
where $U:I_d\times\mathcal{S}^d\times[0,T] \to \re$,
and the terminal condition
\begin{equation}\label{PDE_termcond}
    U^i(\theta,T)=\psi^i(\theta)\,.
\end{equation}

A direct computation show us that the following Proposition holds:

\begin{proposition}
Suppose $U:I_d\times\mathcal{S}^d\times[0,T] \to \re$ is a solution of \eqref{PDE} and \eqref{PDE_termcond}.
Let $\theta:[0,T]\to \mathcal{S}^d$ and $u:[0,T]\to \re^d$ be two functions such that
\begin{enumerate}
  \item the first equation of \eqref{PVIT} is satisfied, i.e. $\frac{d}{dt}\theta^i=g(u,\theta,i)$;
  \item $\theta(0)= \theta_0$;
  \item $u^i(t)= U^i(\theta(t),t)\,.$
\end{enumerate}
Then $u$ satisfies the second equation of \eqref{PVIT}, i.e. $-\frac{d}{dt}u^i = h(\Delta_iu,\theta,i)$ as well as the terminal condition $u^i(T)=\psi^i(\theta(T))$. Therefore, $u$ is the value function associated to $\theta$, and so it determines a Nash equilibria for the MFG.
\end{proposition}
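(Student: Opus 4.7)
The proposed proof is a direct chain-rule verification: differentiate the identity $u^i(t)=U^i(\theta(t),t)$ in time, substitute the evolution equation for $\theta$ (condition~1) to express the $\dot\theta^k$ terms, and then invoke the PDE \eqref{PDE} evaluated along the trajectory to collapse the time and spatial derivatives of $U$ into the Hamiltonian $h$.

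More concretely, first I would write
\[
\frac{du^i}{dt}(t)=\frac{\partial U^i}{\partial t}(\theta(t),t)+\sum_k \frac{\partial U^i}{\partial \theta^k}(\theta(t),t)\,\dot\theta^k(t).
\]
Using condition~1, $\dot\theta^k=g(u,\theta,k)$, and condition~3, $u(t)=U(\theta(t),t)$, this becomes
\[
\frac{du^i}{dt}(t)=\frac{\partial U^i}{\partial t}(\theta(t),t)+\sum_k g(U(\theta(t),t),\theta(t),k)\,\frac{\partial U^i}{\partial \theta^k}(\theta(t),t).
\]
Comparing with the PDE \eqref{PDE} evaluated at $(\theta(t),t)$, the right-hand side equals $-h(U(\theta(t),t),\theta(t),i)$, so that $-\tfrac{d}{dt}u^i(t)=h(U(\theta(t),t),\theta(t),i)$.

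The only subtlety to flag is the identification of the $h$ argument. Because $h$ depends on its first slot only through $\Delta_i$, and condition~3 gives $\Delta_i U(\theta(t),t)=(U^1-U^i,\dots,U^d-U^i)(\theta(t),t)=\Delta_i u(t)$, we may write $h(U(\theta(t),t),\theta(t),i)=h(\Delta_i u(t),\theta(t),i)$, which is the right-hand side of the second equation in \eqref{PVIT}. The terminal condition is then immediate: setting $t=T$ in condition~3 and using \eqref{PDE_termcond} yields $u^i(T)=U^i(\theta(T),T)=\psi^i(\theta(T))$. Finally, by the Verification Theorem \ref{MFG:VerT} already established for the HJ system, $u$ is the value function associated with $\theta$, and the pair $(\theta,u)$ is a Nash equilibrium of the MFG.

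There is no substantive obstacle: the proposition is essentially the method of characteristics for \eqref{PDE}, written in mean-field-game language. The only point requiring care is the equivalence between $h(U,\theta,i)$ in the PDE and $h(\Delta_i u,\theta,i)$ in the ODE, which follows from the notational convention after \eqref{LegendreTransform}.
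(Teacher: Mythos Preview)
Your proof is correct and is exactly the ``direct computation'' the paper alludes to: apply the chain rule to $u^i(t)=U^i(\theta(t),t)$, substitute $\dot\theta^k=g(u,\theta,k)$, and invoke \eqref{PDE} to obtain the Hamilton--Jacobi equation, with the terminal condition coming from \eqref{PDE_termcond}. Your remark on identifying $h(U,\theta,i)$ with $h(\Delta_i u,\theta,i)$ via the convention after \eqref{LegendreTransform} is the only point worth making explicit, and you handle it correctly.
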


 As a consequence of the above Proposition, if $U$ is a solution of \eqref{PDE} and \eqref{PDE_termcond}, the initial value problem
\begin{eqnarray}
\begin{cases}
  \frac{d}{dt}\theta^i=g(U^i(\theta(t),t),\theta,i), \\
  \theta(0)=\theta_0 \label{ODEPVI},
\end{cases}
\end{eqnarray}
can be solved by the usual methods of the  ODE theory to find a Nash equilibrium $\theta$ and the associated value function $u^i(t)= U^i(\theta(t),t)\,,$
for any initial distribution $\theta_0$.
Also, the function $U$ allows one to calculate the optimal strategies for each player, at any time.
In fact the optimal switching of a player in state $i$, given the distribution $\theta$ of players, is $\alpha_j^*(\Delta_i U(\theta,t),\theta,i)$, for $1 \leq j \leq d$.


We should observe that \eqref{eqhj} can be regarded as a discretization of \eqref{PDE}. Indeed, set $\theta=\frac n N$
and assume $u_n^i\simeq U^i(\theta, t)$, for some smooth function $U$. Then, for large $N$, from \eqref{gammaeq}
we have
\begin{align*}
\frac{\gamma^{n,i}_{ kj}}{N}&= \frac{n^k}{N} \alpha_j^*\Bigg(\Delta_k u_{n+e_{ik}},\frac{n+e_{ik}}{N},k\Bigg)\,\\
&\simeq \theta^k \alpha_j^*\Bigg(\Delta_k U,\theta,k\Bigg).
\end{align*}
Furthermore,
\begin{align*}
\frac{u^i_{n+e_{jk}} - u^i_n}{1/N}&=\frac{u^i_{n+e_{jk}} -u^i_{n+e_j}+u^i_{n+e_j}-u^i_n}{1/N}\\
&\simeq -\frac{\partial U^i}{\partial \theta^k}+\frac{\partial U^i}{\partial \theta^j}.
\end{align*}
Therefore
\begin{align*}
\sum_{k,j} \gamma^{n,i}_{ kj} (u^i_{n+e_{jk}} - u^i_n  )&\simeq
\sum_{k,j} \theta^k \alpha_j^*\left(\Delta_k U,\theta,k\right)\left(\frac{\partial U^i}{\partial \theta^j}-\frac{\partial U^i}{\partial \theta^k}\right)\\
&=\sum_{k,j} \theta^k \alpha_j^*\left(\Delta_k U,\theta,k\right)\frac{\partial U^i}{\partial \theta^j},
\end{align*}
taking into account that $\sum_j\alpha_j^*=0$.
Observing that
\[
\sum_{k,j} \theta^k \alpha_j^*\left(\Delta_k U,\theta,k\right)\frac{\partial U^i}{\partial \theta^j}=
\sum_j g(U,\theta,j) \frac{\partial U^i}{\partial \theta^j},
\]
and
\[
h\left(\Delta_i u_n, \frac n N, i\right)=h\left(u_n, \frac n N, i\right)\simeq h\left( U, \theta, i\right),
\]
we conclude, from the above and \eqref{eqhj}, that
\[
-\frac{\partial U^i}{\partial t}(\theta,t) \simeq h(U,\theta,i)+ \sum_j g(U,\theta,j) \frac{\partial U^i}{\partial \theta^j}(\theta,t).
\]


For potential mean field games \eqref{PDE} can be further simplified if we  suppose that
 the terminal condition is given by a gradient
\begin{equation}\label{termcond-pot}
    U^i(\theta,T)= \nabla_{\theta^i} \Psi_T(i,\theta)\,.
\end{equation}
In this case let $\Psi$ be a solution of the  PDE
\begin{equation}\label{outrapde}
    \begin{cases}
    -\frac{\partial \Psi}{\partial t}= H\left(\nabla_{\theta} \Psi, \theta\right), \\
    \Psi(\theta,T) = \Psi_T(\theta)\,.
    \end{cases}
\end{equation}
Then a direct calculation can show that $U^i(\theta,t) = \nabla_{\theta^i} \Psi(\theta,t)$ is a solution of  \eqref{PDE} together with the terminal condition \eqref{termcond-pot}.
We should observe that the solutions to the Hamilton´s equations \eqref{Hamiltequat} are in fact characteristic curves for \eqref{outrapde}.
The Hamilton-Jacobi PDE \eqref{outrapde} was explored in \cite{Gueant1,Gueant2}  to the study of MFG problems on graphs.


\appendix

\section{Auxiliary Results}

{\bf Proof of Proposition \ref{Lipschitz0}: } 

To prove the first item we use the definition of $h$ and $\alpha^*$ and also that $\displaystyle v^i\sum_j \alpha^*_j(z,\theta,i)=0$ to get 
$$ h(z,\theta,i)+\sum_j \alpha^*_j(z,\theta,i)\,v^j= c(i,\theta,\alpha^*(z,\theta,i))+\sum_j \alpha^*_j(z,\theta,i)\,(z^j+v^j-z^i-v^i)\,.$$
 Hence by the definition of $h(z+v,\theta,i)$ we have
$$ h(z,\theta,i)+\sum_j \alpha^*_j(z,\theta,i)\,v^j\geq h(z+v,\theta,i).$$
From this \eqref{cvx} holds and we deduct that if $h$ is differentiable
\[
\alpha^*_j=\frac{\partial h(\Delta_iz,\theta,i) }{\partial z^j}.
\]

Note that item (c) is a direct corollary of item (b), since
\begin{equation}\label{hasafunctionofalphastar}
    h(p,\theta,i)=c(i,\theta,\alpha^*(p,\theta,i))+\alpha^*(p,\theta,i)\cdot p
\end{equation}
and the function $c$ is Lipschitz  in $\theta$ and differentiable in $\alpha$.

From this point on in this proof we will omit the index $i$ as it is not relevant and simplifies the notation.
To prove item (b) we will use the following inequalities, which are consequence of the uniform convexity of $c$: for all $\theta, \theta\,'\in \mathcal{S}^d$,   $\alpha\,', \alpha\in(\mathbb{R}_0^+)^d$, $\sum_k \alpha_k=\sum_k \alpha_k'=0$,  and $  p,p\,'\in \mathbb{R}^d$, we have
\begin{equation}\label{uniconv1}c(\theta,\alpha')+\alpha'\cdot p\,'\geq c(\theta,\alpha)+\alpha \cdot p\,'+(\nabla_\alpha c(\theta,\alpha)+p\,')\cdot(\alpha'-\alpha)+\gamma \|\alpha'-\alpha\|^2,
\end{equation} and because $\alpha^*(p,\theta)$ is a minimizer,
 \begin{equation}\label{uniconv2}(\nabla_\alpha c(\theta,\alpha^*(p,\theta))+p)\cdot(\alpha'-\alpha^*(p,\theta))\geq 0\,.
 \end{equation}

 We will first prove that $\alpha^*$ is uniformly Lipschitz in $p$ : for that, we
 suppose  that $\theta$ is fixed. By the definition of $\alpha^*$ and  equation \eqref{uniconv1} we have
$$ c(\alpha^*(p))+\alpha^*(p)\cdot p\,'\geq c(\alpha^*(p\,'))+\alpha^*(p\,') \cdot p\,' \geq$$
$$\geq c(\alpha^*(p))+\alpha^*(p)\cdot p\,'+(\nabla_\alpha c(\alpha^*(p))+p\,')\cdot(\alpha^*(p\,')-\alpha^*(p))+\gamma \|\alpha^*(p\,')-\alpha^*(p)\|^2,
$$ hence

$$ 0\geq (\nabla_\alpha c(\alpha^*(p))+p)\cdot(\alpha^*(p\,')-\alpha^*(p))+(p\,'-p)\cdot(\alpha^*(p\,')-\alpha^*(p))+\gamma \|\alpha^*(p\,')-\alpha^*(p)\|^2.
$$

Now using  \eqref{uniconv2} we obtain

$$ 0\geq (p\,'-p)\cdot(\alpha^*(p\,')-\alpha^*(p))+\gamma \|\alpha^*(p\,')-\alpha^*(p)\|^2.
$$
Therefore
$$\|p-p\,'\,\|\,\|\alpha^*(p\,')-\alpha^*(p)\|\geq\gamma \|\alpha^*(p\,')-\alpha^*(p)\|^2\,,
$$
which implies
$$\|\alpha^*(p\,')-\alpha^*(p)\|\leq \frac{1}{\gamma}\big\|p\,'-p\,\big\|.$$
This shows that $\alpha^*$ is uniformly Lipschitz in $p$.

Now we prove that $\alpha^*$ is Lipschitz in $\theta$: for that, we suppose that $p$ is fixed.  Again by the definition of $\alpha^*$ and  by equation \eqref{uniconv1} we have
$$c(\theta\,',\alpha^*(\theta))+ \alpha^*(\theta)\cdot p\geq c(\theta\,',\alpha^*(\theta\,'))+ \alpha^*(\theta\,')\cdot p
$$
$$\geq  c(\theta\,',\alpha^*(\theta))+ \alpha^*(\theta)\cdot p+(\nabla_\alpha c(\theta\,',\alpha^*(\theta)  )+p)\cdot(\alpha^*(\theta\,')-\alpha^*(\theta))+\gamma\|\alpha^*(\theta\,')-\alpha^*(\theta) \|^2,
$$
and then
$$0\geq (\nabla_\alpha c(\theta\,',\alpha^*(\theta)  )+p)\cdot(\alpha^*(\theta\,')-\alpha^*(\theta))+\gamma\|\alpha^*(\theta\,')-\alpha^*(\theta) \|^2.$$ Using equation \eqref{uniconv2} we get
$$0\geq \big[\nabla_\alpha c(\theta\,',\alpha^*(\theta)  )-\nabla_\alpha c(\theta,\alpha^*(\theta))\big]\cdot(\alpha^*(\theta\,')-\alpha^*(\theta))
+\gamma\|\alpha^*(\theta\,')-\alpha^*(\theta) \|^2. $$
As $\nabla_\alpha c(\theta,\alpha)$ is Lipschitz in the variable $\theta$ we have

$$K_c\|\theta\,'-\theta \|\,\|\alpha^*(\theta)-\alpha^*(\theta\,')\|\geq\gamma\|\alpha^*(\theta\,')-\alpha^*(\theta) \|^2 .$$
Therefore
$$\|\alpha^*(\theta)-\alpha^*(\theta\,')\|\leq \frac{K_c}{\gamma}\big\|\theta -\theta\,'\big\|\,, $$
which implies that $\alpha^*$ is Lipschitz in $\theta$.
\hfill \cqd
\bigskip

{\bf Proof of Theorem \ref{verif-Npl}: }

The main tool for proving Theorem \ref{verif-Npl} is once again the Dynkin Formula, now adapted to the present situation:
 suppose $\alpha$ and $\beta$ are two admissible controls.

We recall the infinitesimal generator of the process $(\bi,\bn)$ defined in \eqref{generator-conv}. We have
\begin{align}
\label{generator}  (A^{\beta,\alpha}\varphi)^i_n(s) = \sum_j \alpha_{ij}(n,s)[\varphi^j_n(s)-\varphi^i_n(s) ]+   \sum_{kj}\gamma_{\beta,kj}^{n,i} (s) [ \varphi^i_{n+e_{jk}}(s)-\varphi^i_n(s)   ]
 \end{align}
where $\gamma_\beta$ is defined by (\ref{gamma}).

We have that,
for any function $\varphi:I_d \times \mathcal{S}^d_N \times [0,+\infty) \rar \Rr$, $C^1$ in the last variable, and any $t<T$,
\begin{equation}\label{Dynkin}  \mathbb{E}^{\beta,\alpha}_{A_t(i, n)} \left[\varphi^{\bi_T}_{\bn_T}(T)-\varphi^i_n(t)\right]
= \mathbb{E}^{\beta,\alpha}_{A_t(i, n)} \left[ \int_t^T
\frac{d\varphi^{\bi_s}_{\bn_s}}{dt} (s) + (A^{\beta,\alpha}\varphi)^{\bi_s}_{\bn_s}(s) ds  \right]\,,
 \end{equation}
where $A_t(i, n)$  denotes the event $\bi_t = i$ and $\bn_t = n$.

Now we prove the theorem.
In the Dinkyn formula \eqref{Dynkin} let $\varphi=v$.
Using the terminal condition $v^{\bi_T}_{\bn_T}(T)=\psi^{\bi_T}\left(\frac{\bn_T}{N}\right)$ we have
that, for any admissible control $\alpha$,
\begin{equation}\label{termD}
  \mathbb{E}^{\beta,\alpha}_{A_t(i,n)} \left[\psi^{\bi_T}\Big(\frac{\bn_T}{N}\Big)-v^{i}_{n}(t)\right]
 = \mathbb{E}^{\beta,\alpha}_{A_t(i,n)} \left[ \int_t^T
\frac{d v^{\bi_s}_{\bn_s} }{dt}(s)  + (A^{\beta,\alpha}v)^{\bi_s}_{\bn_s}(s)  ds  \right]\,.
\end{equation}
In the next steps we will use the definition of $u$, given in \eqref{defu}, and then \eqref{termD}, \eqref{generator} , \eqref{LegendreTransform} to have
\begin{eqnarray}
  \nonumber
  u^i_n(t,\beta,\alpha) &=&\mathbb{E}^{\beta,\alpha}_{A_t(i,n)} \left[\psi^{\bi_T}\Big(\frac{\bn_T}{N}\Big)+\int_t^T c\Big(\bi_s,\frac{\bn_s}{N},\alpha_s\Big)ds\right]
\\ &=&
\nonumber
v^i_n(t) + \mathbb{E}^{\beta,\alpha}_{A_t(i,n)}  \int_t^T \left[
\frac{d v^{\bi_s}_{\bn_s} }{dt}(s)  + (A^{\beta,\alpha}v)^{\bi_s}_{\bn_s,}(s) + c\Big(\bi_s,\frac{\bn_s}{N},\alpha_s\Big)ds \right]\,
 \\ & \geq &
 \nonumber
    v^i_n(t) + \mathbb{E}^{\beta,\alpha}_{A_t(i,n)} \left[ \int_t^T
\frac{d v^{\bi_s}_{\bn_s} }{dt}(s) +
\min_{\mu\in (\re^+_0)^d} \sum_j \mu_j [v^{j}_{\bn_s}(s)-v^{\bi_s}_{\bn_s}(s)]+c\Big(\bi_s,\frac{\bn_s}{N},\mu\Big)\right.
\\ &+&
\nonumber
\left. \sum_{kj}\gamma_{\beta,kj }^{\bn_s,\bi_s}(s) [ v^{\bi_s}_{\bn_s+e_{jk}}(s)-v^{\bi_s}_{\bn_s}(s)   ] \right]ds \,
\\    &= &
\nonumber
v^i_n(t) + \mathbb{E}^{\beta,\alpha}_{A_t(i,n)} \left[ \int_t^T
\frac{d v ^{\bi_s}_{\bn_s}}{dt}(s) +
h\Big(\Delta_{\bi_s} v, \frac{\bn_s}{N},\bi_s\Big)
+ \sum_{kj}\gamma_{\beta,kj}^{\bn_s,\bi_s} ( v^{\bi_s}_{\bn_s+e_{jk}}-v^{\bi_s}_{\bn_s})
ds \right]
\\    &= &
\nonumber
v^i_n(t)\,,
\end{eqnarray}
where the last equation holds because $v$ is a solution to the  Hamilton-Jacobi equation \eqref{HJ_prim}.
Note that in this last calculation we are also proving that, for the specific control $\tilde\alpha$ given by \eqref{control},
we have $
  u^i_n(t,\beta,\tilde\alpha)=v^i_n(t)$
which show us that $\tilde\alpha$ is the optimal control and that the objective function  $u^i_n(t,\beta)$ is given by $v_n^i(t)$.

\hfill \cqd

{\bf Proof of Proposition \ref{maxpri}: }

Let $u$ be a solution to \eqref{HJ_prim}. Let $\tilde u=u+\rho (T-t)$. Then

$$
- \frac{d\tilde u_n^i}{dt}  =\rho+
 \sum_{k,j}\gamma_{\beta,kj}^{n,i} (\tilde u^i_{n+e_{jk}}-\tilde u^i_n)
+ h\left(  \Delta_i \tilde u_n,\frac n N,i\right)\,.
$$
Let $(i,n,t)$ be a minimum point of $\tilde u$ on $I_d\times \mathcal{S}^d_N \times [0,T]$.
We have $\tilde u^i_{n+e_{jk}}\geq\tilde u^i_n$. This implies
$\gamma_{\beta,kj}^{n,i} (\tilde u^i_{n+e_{jk}}-\tilde u^i_n)\geq 0$.
We also have $\tilde u^j_n(t)-\tilde u^i_n(t)\geq 0$ hence $\Delta_i \tilde u_n= (\tilde u^1_n(t)-\tilde u^i_n(t),...,\tilde u^d_n(t)-\tilde u^i_n(t))\geq 0$.
Hence
$$-\frac{d\tilde u_n^i}{dt}(t)  \geq h\left( \Delta_i \tilde u_n, \frac n N, i\right)+\rho\geq h\left(0,\frac n N, i\right)+\rho \,,$$
because the definition of $h(\Delta_i p, \theta, i)$, with $\Delta_i p\geq 0$. Furthermore, if we take $M<\rho< 2M$ we get
\[
-\frac{d\tilde u_n^i}{dt}(t)>0.
\]
This shows that the minimum of $\tilde u$ is achieved at $T$ hence
\[
u_n^i(t) \geq -\|u(T)\|_{\infty}-2M (T-t).
\]

\bigskip

Similarly, let
$(i,n,t)$ be a maximum point of $\tilde u$ on $I_d\times \mathcal{S}^d_N\times [0,T]$.
We have $\tilde u^i_{n+e_{jk}}\leq\tilde u^i_n$. This implies
$\gamma_{\beta,kj}^{n,i} (\tilde u^i_{n+e_{jk}}-\tilde u^i_n)\leq 0$.
We also have
$\Delta_i \tilde u_n\leq 0$.
Hence
$$-\frac{d\tilde u_n^i}{dt}(t)  \leq h\left(\Delta_i \tilde u_n, \frac n N, i\right)+\rho\leq h\left(0,\frac n N, i\right)+\rho \,.$$
Furthermore, if we take $-2 M < \rho <-M$ we get
\[
-\frac{d\tilde u_n^i}{dt}(t)<0.
\]
This shows that the maximum of $\tilde u$ is achieved at $T$ hence
\[
u_n^i(t)\leq \|u(T)\|_{\infty}+2M (T-t).
\]

\hfill \cqd


{\bf Proof Lemma \ref{estimgamma}: }

Recall that $\alpha^*(p, \theta, i)$ is Lipschitz in $(p, \theta)$. Let $K$ be the corresponding Lipschitz constant.
Since $\|p\|$ bounded, we have $|\alpha^*(p,.,.)|\leq C$. Then
\begin{align*}
\bigg|\gamma_{\beta,kj}^{n+e_{rs},i}-\gamma_{\beta,kj}^{n,i}\bigg|&=\bigg| ({n+e_{rs}})^k\; \alpha_j^*\Big(\Delta_k u_{n+e_{rs}+e_{ik}},\frac{n+e_{rs}+e_{ik}}{N},k\Big)-n^k\; \alpha_j^*\Big(\Delta_k u_{n+e_{ik}},\frac{n+e_{ik}}{N},k\Big)\bigg|\\
& =\bigg|\Big[ ({n+e_{rs}})^k-n^k\Big]\; \alpha_j^*\Big(\Delta_k u_{n+e_{rs}+e_{ik}},\frac{n+e_{rs}+e_{ik}}{N},k\Big)\\
&+n^k\;\bigg[\alpha_j^*\Big(\Delta_k u_{n+e_{rs}+e_{ik}},\frac{n+e_{rs}+e_{ik}}{N},k\Big)- \alpha_j^*\Big(\Delta_k u_{n+e_{ik}},\frac{n+e_{rs}+e_{ik}}{N},k\Big)\bigg]\\
&+ n^k\;\bigg[\alpha_j^*\Big(\Delta_k u_{n+e_{ik}},\frac{n+e_{rs}+e_{ik}}{N},k\Big)- \alpha_j^*\Big(\Delta_k u_{n+e_{ik}},\frac{n+e_{ik}}{N},k\Big)\bigg]                     \bigg|\\
&\leq \bigg|\alpha_j^*\Big(\Delta_k u_{n+e_{rs}+e_{ik}},\frac{n+e_{rs}+e_{ik}}{N},k\Big)    \bigg| \\
&+ N\bigg|\alpha_j^*\Big(\Delta_k u_{n+e_{rs}+e_{ik}},\frac{n+e_{rs}+e_{ik}}{N},k\Big)- \alpha_j^*\Big(\Delta_k u_{n+e_{ik}},\frac{n+e_{rs}+e_{ik}}{N},k\Big)  \bigg|\\
&+ N\bigg|\alpha_j^*\Big(\Delta_k u_{n+e_{ik}},\frac{n+e_{rs}+e_{ik}}{N},k\Big)- \alpha_j^*\Big(\Delta_k u_{n+e_{ik}},\frac{n+e_{ik}}{N},k\Big)    \bigg|\\
&\leq C +NK \big|\Delta_k(u_{n+e_{rs}+e_{ik}}-u_{n+e_{ik}}) \big|+NK \bigg|\frac{e_{rs}}{N} \bigg|\\
&\leq C +NK 2\big\|(u^i_{n+e_{rs}+e_{ik}}-u^i_{n+e_{ik}}) \big\|+C\\
&=C+CN\big\|z^{i}_{n+e_{ik},sr} \big\|\leq C+CN\max_{rs}\|z^{\cdot}_{\cdot,sr} \|_{\infty}.
\end{align*}

\hfill \cqd

{\bf Proof of Lemma \ref{contprop}: }

 Let $z$ be a solution of \eqref{unpert2}, and fix $\epsilon>0$. We define $\tilde z= z+\epsilon (t-T)$. Hence $\tilde z$ satisfies
$$-\dot {\tilde z}_m=-\epsilon+\sum_{m'\in I_d\times\Sdn}a_{mm'}(t) ({\tilde z}_{m'}-{\tilde z}_m).$$
Let $(m,t)$ be a maximum point of $\tilde z$ on $I_d\times \Sdn \times [0,T]$.
We have ${\tilde z}_{m}(t)\geq {\tilde z}_{m'}(t)$  and this implies
$a_{mm'}(t) ({\tilde z}_{m'}-{\tilde z}_m)\leq 0$ $\forall m'$.
Hence
$$-\frac{d\tilde z_m}{dt}(t)  \leq -\epsilon. $$
This shows that the maximum of $\tilde z$ is achieved at $T$. Therefore, for all $(m',t)$,
$$z_{m'}(t)+\epsilon(t-T)=\tilde z_{m'}(t)\leq \tilde z_m(T)=z_m(T)\,.$$
Letting $\epsilon\to 0$, we get
$$z_{m'}(t)\leq \max_{m} z_m(T),\; \forall \;(m',t).
$$

From this inequality we have the following conclusions:
\begin{enumerate}
\item
if $z(T)\leq 0$, we then have
$\;z_{m'}(t)\leq 0\;$, for all $(m',t)$, and so $z(t)\leq 0$;
\item for all $(m',t)$,
$$z_{m'}(t)\leq   \|z(T)\|_{\infty}.  $$
\end{enumerate}
Now we define $\tilde z= z+\epsilon (T-t)$. Hence $\tilde z$ satisfies
$$-\dot {\tilde z}_m=\epsilon+\sum_{m'\in I_d\times\Sdn}a_{mm'}(t) ({\tilde z}_{m'}-{\tilde z}_m)\,.$$

Let $(m,t)$ be a minimum point of $\tilde z$ on $I_d\times \Sdn \times [0,T]$.
We have $a_{mm'}(t) ({\tilde z}_{m'}-{\tilde z}_m)\geq 0$.
Therefore we have
$$-\frac{d\tilde z_m}{dt}(t)  \geq \epsilon.$$
This shows that the minimum of $\tilde z$ is also achieved at $T$, hence for all $(m',t)$ we have
$$z_{m'}(t)+\epsilon(T-t)=\tilde z_{m'}(t)\geq \tilde z_m(T)=z_m(T).$$
Letting $\epsilon\to 0$, we get
$\displaystyle z_{m'}(t)\geq \min_{m} z_m(T).    $
Hence
$$z_{m'}(t)\geq  - \|z(T)\|_{\infty},  $$
and therefore we have $\|z(t)\|_{\infty} \leq \|z(T)\|_{\infty}$.

\hfill \cqd

{\bf Proof of Lemma \ref{413}: }

We note  that if $t\leq s\leq T$ we have $K(t,s)K(s,T)=K(t,T)$, which implies
\[
\frac{d}{ds}\bigg( K(t,s)K(s, T)\bigg)=0.
\]
Hence, using equation \eqref{unpert4} we get
\[
-K(t,s)M(s)K(s,T)+\left(\frac{d}{ds} K(t,s)\right)  K(s,T)=0,
\]
and therefore, by taking $T=s$ we conclude that
\begin{equation}
\label{uidt}
 \frac{d}{ds} K(t,s)=K(t, s)M(s).
\end{equation}

Multiplying \eqref{difeneq} by  $K(t,s)$ and using Lemma \ref{ordpr}, we have
$$ -K(t,s)\dot z(s)\leq K(t,s)M(s) z(s) +K(t,s)f(z(s))\,.$$
Using the identity
\[
\frac{d}{ds} K(t,s)z(s)=K(t,s) \dot z(s)+ K(t, s) M(s) z(s),
\]
which follows from \eqref{uidt},
we get
\[
-\frac{d}{ds} \Big(K(t,s)z(s)\Big)  \leq  K(t, s)f(z(s)).
\]
%
%
%
%
Thus, integrating between $t$ and $T$, we have
\[
z(t)-K(t, T)z(T)\leq \int_t^T K(t, s)f(z(s)) ds.
\]

Note that if $z(t)=K(t,T)z(T)$ is a solution of \eqref{unpert2} with terminal data $z(T)=b$, then  Lemma \ref{contprop} implies that $\|z(t)\|_{\infty} \leq \|z(T)\|_{\infty}$, hence
 $\|K(t,T)z(T)\|_{\infty} \leq \|z(T)\|_{\infty} .$

Therefore for all $m\in I_d\times \Sss^d_N$ we have
\[
z_m(t)\leq  \|z(T)\|_{\infty}+\int_t^T \|f(z(s))\|_{\infty}ds.
\]

\hfill \cqd

{\bf Proof of Lemma \ref{tfc}: }

Note that (\ref{tfc_p}) implies that
  $v$ is a monotone decreasing function of $s$ and
 is equivalent to
$$\begin{cases}
\frac{ds}{d v}= \frac{-1}{Cv+C N v^2 + \frac{C}{N}}\,,\\
s(\frac{C}{N})\leq T.
\end{cases}
$$
This implies by direct integration that
$$ s\left(\frac{2C}{N}\right)
\leq T - \int_{\frac{C}{N}}^{\frac{2C}{N}} \frac{dv}{Cv+C N v^2 + \frac{C}{N}}\,. $$ Now
$$\int_{\frac{C}{N}}^{\frac{2C}{N}} \frac{dv}{Cv+C N v^2 + \frac{C}{N}} \geq
\int_{\frac{C}{N}}^{\frac{2C}{N}} \frac{N}{2C^2+4C^3+C}dv = \frac{1}{2C+4C^2+1}. $$
Therefore if we define $T^{\star}=\frac{1}{2C+4C^2+1} $, we  have that $s\left(\frac{2C}N\right) \leq 0$ if $T \leq T^{\star}$.
Hence this implies $v(0) \leq \frac{2C}N$, which yields the desired result when we take into account that $v$ is a decreasing function of $s$.

\hfill \cqd







\bibliographystyle{alpha}

\bibliography{mfg}

\end{document}